\numberwithin{equation}{section}
\newcommand{\N}{\mathbb{N}}
\newcommand{\R}{\mathbb{R}}
\newcommand{\sfd}{{\sf d}}
\renewcommand{\d}{{\mathrm d}}
\newcommand{\eps}{\varepsilon}
\newcommand{\fr}{\penalty-20\null\hfill\(\blacksquare\)}
\newcommand{\X}{{\rm X}}
\newcommand{\Y}{{\rm Y}}
\newcommand{\sus}{\subseteq}
\newtheorem{theorem}{Theorem}[section]
\newtheorem{corollary}[theorem]{Corollary}
\newtheorem{lemma}[theorem]{Lemma}
\newtheorem{proposition}[theorem]{Proposition}
\newtheorem{definition}[theorem]{Definition}
\newtheorem{remark}[theorem]{Remark}
\title{Direct limits of infinite-dimensional Carnot groups}
\author{Terhi Moisala}
\address{University of Jyv\"{a}skyl\"{a}}
\email{moisala.terhi@gmail.com}
\author{Enrico Pasqualetto}
\address{Scuola Normale Superiore, Piazza dei Cavalieri 7,
56126 Pisa, Italy}
\email{enrico.pasqualetto@sns.it}
\begin{document}
\date{January 11, 2021} 
\keywords{Carnot group, G\^{a}teaux differential, Rademacher theorem, direct limit, scalable group} 
\subjclass[2010]{28A15, 53C17, 22A10, 18A30}
\begin{abstract}
We give a construction of direct limits in the category of complete metric scalable groups and provide sufficient conditions for the limit to be an infinite-dimensional Carnot group. We also prove a Rademacher-type theorem for such limits.
\end{abstract}
\maketitle
\tableofcontents
\section{Introduction}
\subsection{Overview}
The main purpose of this paper is to study direct limits of Carnot groups. Carnot groups are characterized as the locally compact, geodesic metric spaces, that are isometrically homogeneous and admit dilations
(see \cite{LD13}). During the last decades, they have provided a fruitful framework for Geometric Analysis and Metric Geometry (see \cite{LD17} for a comprehensive introduction to Carnot groups). Recently, there has been growing interest in generalizing Carnot groups into infinite dimensions. A notion of infinite-dimensional Heisenberg group based on an abstract Wiener space was introduced in \cite{DriverGordina08, BGT13} and Lie groups generalizing those in \cite{Mel09}. A form of infinite-dimensional sub-Riemannian geometry from control theoretic viewpoint was suggested in \cite{GMV15}. In \cite{mr13,MPS19}, a Rademacher-type theorem has been proven when the target is a so called Banach homogeneous group, which is a Banach space equipped with a suitable non-abelian group structure. In a recent paper \cite{LDZ19}, the authors study inverse limits of free nilpotent Lie groups.

By studying direct limits of Carnot groups we continue, from a constructive viewpoint, the development of infinite-dimensional generalization of Carnot groups.
Our starting point is the notion of infinite-dimensional Carnot group introduced by E.\ Le Donne, S.\ Li, and the first named author in \cite{LDLM19}. The direct limits are considered -- in the categorical sense -- in the context of \emph{(complete) metric scalable groups} (briefly, a (C)MSG). Metric scalable groups are metric groups endowed with a family \(\{\delta_\lambda\}_{\lambda\in\R}\subseteq{\rm Aut}(G)\) of \emph{dilations} of \(G\) (see Definition \ref{def:metric_scalable_gp}). By an infinite-dimensional Carnot group \(G\) we mean a CMSG admitting a \emph{filtration by Carnot subgroups}, meaning that there exists an increasing sequence \(N_1\subseteq N_2\subseteq\ldots\subseteq G\) of Carnot groups whose union is dense in \(G\); the precise definition is recalled in Definition \ref{def:inf-dim_Carnot}.

It is rather straightforward to show that every (infinite-dimensional) Carnot group is a direct limit of finite-dimensional Carnot groups (see Proposition \ref{prop:infdim_Carnot_is_DL}) in the category of CMSGs. A more challenging task is to understand when a direct system of infinite-dimensional Carnot groups has a direct limit in the category of CMSGs, and when the limit is an infinite-dimensional Carnot group itself. We give now an overview of our results.

\subsection{Construction of direct limits}
We start our consideration from scalable groups, that in this paper are groups equipped with dilation automorphisms. It turns out (not surprisingly) that direct limits of scalable groups always exist, since they are purely algebraic objects and no topology is involved. Their existence will be proved
(via an explicit construction) in Theorem \ref{thm:DL_scalable_gps}.

In the setting of MSGs the situation is much more delicate, as we are now going to describe. In analogy with the case of Banach spaces,
one might heuristically expect that -- calling
\(\big(G',\{\varphi'_i\}_{i\in I}\big)\) the direct limit of
our direct system in the category of scalable groups --
the direct limit as a MSG can be built as follows:
first, we define the pseudodistance \(\sfd'\) (i.e., distinct
points may have zero distance) on \(G'\) as
\[
\sfd'(x,y)\coloneqq\inf\Big\{\sfd_i(x_i,y_i)\;\Big|\;
i\in I,\,x_i,y_i\in G_i,\,\varphi'_i(x_i)=x,\,\varphi'_i(y_i)=y\Big\}
\quad\text{ for every }x,y\in G';
\]
second, we consider the quotient \(G'/\sim\), where \(x\sim y\) if and
only if \(\sfd'(x,y)=0\), which inherits a natural structure of metric
space. This sort of construction will be investigated
in Section \ref{ss:DL_MSG} and is called the \emph{metric limit} (see Definition \ref{def:metric_limit}). In the case of CMSGs, the candidate direct limit \(G\) is given by the metric completion \((G,\sfd)\) of the metric limit \(G'/\sim\).

In Banach spaces all the vector space operations are automatically Lipschitz-continuous. In fact, direct limits of Banach spaces always exist and are exactly the completions of metric limits. However, in the context of (C)MSGs, the metric limit does not necessarily provide us with a (C)MSG. The obstruction comes
from the fact that there is no reason why right translations, inversion,
and dilation on \(G'\) should be continuous with respect to \(\sfd'\);
only left translations must be continuous due to
the left-invariance of \(\sfd'\). Consequently, one cannot expect
the scalable group structure to carry over to  \(G'/\sim\) and to
its completion \(G\), and even if it does, the resulting space \(G\)
does not need to be a topological group. We give an example of this troublesome phenomenon in Section \ref{ss:ex_deg_DS}.

In Definitions \ref{def:non-deg_DS_MSG} and \ref{def:non-deg_DS_CMSG} we introduce some conditions on the continuity of the operations on $ G'$ that guarantee the well-posedness of the metric limit and its metric completion. When the direct system in the category of MSGs or CMSGs satisfies these conditions, we say that it is \emph{non-degenerate} in the corresponding category. Our main result is the following:
\begin{theorem}[Direct limits of (C)MSGs]\label{thm:DL_equiv_non-deg}
	Let $\big(\{G_i\}_{i\in I},\{\varphi_{ij}\}_{i\leq j}\big)$ be a direct system of (complete) metric scalable groups. Then the direct limit exists and equals the (completion of the) metric limit if and only if the direct system is non-degenerate.
\end{theorem}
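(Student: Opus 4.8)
The plan is to prove the two implications separately, keeping the scalable-group direct limit $\big(G',\{\varphi'_i\}_{i\in I}\big)$ of Theorem~\ref{thm:DL_scalable_gps} as the common scaffold. Recall that $G'$ is covered by the images $\varphi'_i(G_i)$, that each $\varphi'_i$ is $1$-Lipschitz from $(G_i,\sfd_i)$ into $(G',\sfd')$, and that $\sfd'$ is a left-invariant pseudodistance; the metric limit $G'/\!\sim$ of Definition~\ref{def:metric_limit} is the quotient of $(G',\sfd')$ by the relation of zero $\sfd'$-distance, so the projection $\pi\colon G'\to G'/\!\sim$ preserves distances and is surjective, and likewise $G'\to G$ is distance-preserving with dense image in the CMSG case.

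For the ``if'' direction, assume the direct system is non-degenerate. I would first check that the conditions of Definition~\ref{def:non-deg_DS_MSG} ensure that the algebraic operations of $G'$ -- right translations, inversion and the dilations $\delta_\lambda$ -- are $\sfd'$-continuous and compatible with the relation $\sim$; hence they descend to $G'/\!\sim$, which thereby becomes a metric scalable group, and the $\varphi_i\colon G_i\to G'/\!\sim$ are morphisms of MSGs. In the CMSG case one then uses the uniform-continuity estimates built into Definition~\ref{def:non-deg_DS_CMSG} to extend multiplication, inversion and each $\delta_\lambda$ continuously to the completion $(G,\sfd)$, and verifies that $G$ so equipped satisfies the CMSG axioms. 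The main step is the universal property: given a (C)MSG $H$ and morphisms $\psi_i\colon G_i\to H$ compatible with the $\varphi_{ij}$, the universal property of $G'$ among scalable groups yields a unique scalable-group morphism $\psi'\colon G'\to H$ with $\psi'\circ\varphi'_i=\psi_i$. Writing arbitrary $x,y\in G'$ through representatives $x_i,y_i\in G_i$ and using that $\psi_i$ is $1$-Lipschitz, $\sfd_H(\psi'(x),\psi'(y))=\sfd_H(\psi_i(x_i),\psi_i(y_i))\le\sfd_i(x_i,y_i)$; taking the infimum over representatives gives $\sfd_H(\psi'(x),\psi'(y))\le\sfd'(x,y)$, so $\psi'$ is $1$-Lipschitz for $\sfd'$. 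Therefore $\psi'$ factors through $G'/\!\sim$ and, in the CMSG case, extends to $G$ using completeness of $H$ and density of $\bigcup_i\varphi_i(G_i)$; the extension is still a homomorphism commuting with dilations by continuity of the operations of $H$, and density forces uniqueness. This exhibits $(G,\{\varphi_i\})$ as the direct limit, equal to the (completion of the) metric limit.

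For the ``only if'' direction, suppose the direct limit exists in the category of (C)MSGs and coincides, via the canonical maps, with the (completion of the) metric limit. Then the metric limit -- resp.\ its completion -- is by hypothesis an object of the category, hence a (C)MSG; and since the $\varphi'_i$ are morphisms into it and $G'$ is covered by the $\varphi'_i(G_i)$, its group law and dilations are forced to be the ones transported from $G'$ along $\pi$. Consequently $\pi$ (resp.\ $G'\to G$) is a $1$-Lipschitz scalable-group homomorphism with dense image whose target has continuous operations, and pulling these continuity statements back along $\pi$ returns precisely the conditions of Definitions~\ref{def:non-deg_DS_MSG} and~\ref{def:non-deg_DS_CMSG}; hence the system is non-degenerate.

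I expect the genuinely analytic part to be the completion step of the ``if'' direction -- extending the operations continuously to $G$ and verifying the CMSG axioms there -- and the subtle categorical point to be the calibration of the non-degeneracy conditions, which must be exactly strong enough to run that step and no stronger, so that both implications, and hence the full equivalence, hold; the rest is a routine unwinding of the constructions and of the universal property of $G'$.
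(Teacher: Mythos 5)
Your proposal is correct and follows essentially the same route as the paper's own proof (split into Theorems \ref{thm:MSG_DL} and \ref{thm:DL_of_CMSG}): build the metric limit on the scalable-group direct limit $G'$, use the non-degeneracy conditions together with Lemma \ref{lem:cont_operations} to make it a (C)MSG and (in the complete case) extend the operations to the completion as in Lemma \ref{lem:suff_cond_Cauchy_op}, derive the universal property from that of $G'$ via the $1$-Lipschitz infimum-over-representatives estimate and factoring through $Z(\sfd')$, and for the converse pull the continuity/Cauchy-continuity of the operations back to $(G',\sfd')$. The only point to tighten is that in the completion step one extends the \emph{joint} dilation map $\delta\colon\R\times G\to G$ (Cauchy-continuous by Lemma \ref{lem:cont_operations} \ref{it:delta_Cauchy}), not merely each $\delta_\lambda$ separately, so as to obtain a topological scalable group.
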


The non-degeneracy assumptions can be formulated as a quantitative condition
on the direct system (see Propositions \ref{prop:char_non-def_DS} and \ref{prop:char_non-def_DS_complete}). In the general case the formulation is quite involved, but in some specific circumstances of interest many simplifications occur; for instance, when the elements of the direct system have uniformly bounded nilpotency step (Corollary \ref{cor:nilpotent_DL}) or have a Carnot group structure (Theorem \ref{thm:DL_of_infdim_Carnots_proof}). We remark that these kinds of questions on continuity of group operations have also been of independent interest, see e.g.\ \cite{Mon36} for a classical result and \cite{Tkacenko14} for a survey on semitopological groups.

 We stress that the non-degeneracy condition is necessary and sufficient for the fact that the metric completion of the metric limit is a metric scalable group, but it is not clear if this fact is equivalent to the existence of the direct limit.  A problem that still remains open is the following: 
\begin{itemize}
	\item[\({\rm P1})\)] Is there a direct system of CMSGs
	that is degenerate, whose direct limit exists?
\end{itemize}
\medskip

Regarding the question if infinite-dimensional Carnot groups are stable under the operation of taking direct limits, we provide a sufficient criterion. In this paper we say that a direct system is countable if the indexing set $ I $ is countable.
\begin{theorem}[Direct limits of infinite-dimensional Carnot groups]
\label{thm:DL_of_infdim_Carnots}
	Let $\big(\{G_i\}_{i\in I},\{\varphi_{ij}\}_{i\leq j}\big)$ be a countable non-degenerate direct system of infinite-dimensional Carnot groups in the category of CMSGs. If each group $ G_i $ is nilpotent, then the direct limit is an infinite-dimensional Carnot group.
\end{theorem}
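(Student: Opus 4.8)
The strategy is: invoke Theorem \ref{thm:DL_equiv_non-deg} to know that the direct limit $G$ exists, equals the metric completion of the metric limit, and in particular is a CMSG; then manufacture a filtration of $G$ by Carnot subgroups out of the filtrations of the $G_i$. Let $\varphi_i'\colon G_i\to G$ denote the canonical maps of the limiting cone: in the non-degenerate case these are homomorphisms intertwining the dilations, they are $1$-Lipschitz (straight from the definition of $\sfd'$), they satisfy $\varphi_i'=\varphi_j'\circ\varphi_{ij}$ for $i\le j$, and the union $\bigcup_i\varphi_i'(G_i)$ is exactly the metric limit, hence dense in $G$. Since $I$ is countable and directed it admits a cofinal increasing sequence, and replacing $I$ by it changes neither the direct limit nor the metric limit, so we may assume $I=\N$. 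For each $i$ fix, by the definition of infinite-dimensional Carnot group, a filtration $N_1^i\sus N_2^i\sus\cdots\sus G_i$ by finite-dimensional Carnot subgroups with $\overline{\bigcup_k N_k^i}=G_i$, and set $\mathcal F\coloneqq\{\varphi_i'(N_k^i)\mid i,k\in\N\}$. Chaining the three density facts ($\bigcup_k N_k^i$ dense in $G_i$; $\varphi_i'$ continuous; $\bigcup_i\varphi_i'(G_i)$ dense in $G$) gives that $\bigcup\mathcal F$ is dense in $G$, and $\mathcal F$ is countable.

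The heart of the argument is the following claim: \emph{every subgroup of $G$ generated by finitely many members of $\mathcal F$ is a finite-dimensional Carnot subgroup of $G$.} Given members $\varphi_{i_1}'(N_{k_1}^{i_1}),\dots,\varphi_{i_m}'(N_{k_m}^{i_m})$, put $j\coloneqq\max\{i_1,\dots,i_m\}$ and use $\varphi_{i_r}'=\varphi_j'\circ\varphi_{i_rj}$ to write the subgroup they generate as $\varphi_j'(H)$, where $H\le G_j$ is the subgroup generated by the $C_r\coloneqq\varphi_{i_rj}(N_{k_r}^{i_r})$. Each $C_r$ is the image of a finite-dimensional Carnot group under a morphism of CMSGs, hence a finite-dimensional Carnot subgroup of $G_j$, and as such it is generated by finitely many of its horizontal one-parameter subgroups; therefore $H$ is generated by finitely many---say $q$---horizontal one-parameter subgroups of $G_j$. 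Here the hypothesis enters: $G_j$ is nilpotent, of some finite step $s_j$, so $H$ is nilpotent of step $\le s_j$ and generated by $q$ horizontal one-parameter subgroups, hence a quotient of the free step-$s_j$ nilpotent Carnot group on $q$ generators, which is finite-dimensional. Thus $H$ is a finite-dimensional Carnot subgroup of $G_j$ (its dimension is bounded by that of the free group, with no bound uniform in $m$). Finally $\varphi_j'(H)$ is the image of a finite-dimensional Carnot group under the morphism $\varphi_j'$, hence a finite-dimensional Carnot subgroup of $G$, which proves the claim.

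With the claim the proof finishes by a diagonal assembly. Enumerate $\mathcal F=\{F_1,F_2,\dots\}$ and set $L_m\coloneqq\langle F_1,\dots,F_m\rangle\le G$. By the claim each $L_m$ is a finite-dimensional Carnot subgroup of $G$; clearly $L_m\sus L_{m+1}$; and $\bigcup_m L_m\supseteq\bigcup_n F_n=\bigcup\mathcal F$ is dense in $G$. Hence $(L_m)_{m\in\N}$ is a filtration of $G$ by Carnot subgroups, and, $G$ being a CMSG, this means precisely that $G$ is an infinite-dimensional Carnot group.

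The crux---and the sole use of the hypothesis---is the finite-dimensionality of $H$: with no bound on the nilpotency step, the subgroup generated by as few as two horizontal one-parameter subgroups can already be infinite-dimensional, so one genuinely needs each $G_i$, equivalently each $G_j$, to have finite step; this finite step is not uniform in $j$, consistently with $G$ being infinite-dimensional. Countability of $I$ is used both to pass to a sequential indexing and to list $\mathcal F$ as a sequence. The two structural inputs---that a morphism of CMSGs carries a Carnot subgroup of the source onto a Carnot subgroup of the target (the image being a quotient of a Carnot group by a closed, normal, dilation-invariant subgroup, hence again a finite-dimensional homogeneous group, on which the restricted homogeneous distance is equivalent to a Carnot--Carath\'eodory one), and that a step-$s$ nilpotent CMSG generated by $q$ horizontal one-parameter subgroups is a quotient of the free step-$s$ nilpotent group on $q$ generators---are facts about the category of (C)MSGs that we take from the foundational material.
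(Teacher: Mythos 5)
Your strategy is viable and genuinely different from the paper's. The paper (Theorem \ref{thm:DL_of_infdim_Carnots_proof}) never exhibits a single Carnot subgroup of the limit: it verifies criterion ii) of Proposition \ref{prop:filtration_equiv_to_generating_V1} by pushing forward the countable first-layer generating sets $A_i\subset V_1(G_i)$ to $A=\bigcup_i\varphi_i(A_i)\subset V_1(G)$, and uses directedness to trap any finite $\Omega\subset A$ inside a single $\varphi_\ell(G_\ell)$, whose nilpotency makes $\langle\Omega\rangle$ nilpotent; the same generating set, via Lemma \ref{lemma:generating_V1_implies_delta_Cauchy}, even allows the paper to assume only \ref{eq:inv_non-deg_lem} rather than full non-degeneracy. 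You instead assemble an explicit filtration of $G$ out of the filtrations of the $G_i$, which makes the role of nilpotency very transparent; your use of Theorem \ref{thm:DL_equiv_non-deg} for existence is fine, since non-degeneracy is part of the hypothesis of the statement you are proving.

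The gap is in the load-bearing claim that the subgroup $H\leq G_j$ generated by finitely many horizontal one-parameter subgroups is a finite-dimensional Carnot subgroup and that $\varphi'_j(H)$ is again one: this rests entirely on your two ``structural inputs'', which are neither proved here nor available in the paper as citable statements, and they are not formalities. For the first (a CMSG morphism maps a Carnot subgroup onto a Carnot subgroup) one must check that the kernel is a closed, connected, dilation-invariant normal subgroup, that the quotient is stratified, and---the real point---that the induced injective morphism from the quotient Carnot group is a homeomorphism onto its image; this last step needs the homogeneity-plus-compact-unit-sphere argument and appears nowhere in the paper. For the second (a step-$\le s$ nilpotent group generated by $q$ horizontal one-parameter subgroups is a quotient of the free step-$s$ Carnot group $\mathbb F_{q,s}$, hence finite-dimensional Carnot) you must first produce a morphism of scalable groups $\mathbb F_{q,s}\to G_j$ with $\exp(tX_p)\mapsto\delta_t(x_p)$: its well-posedness (why do all relations among horizontal exponentials in $\mathbb F_{q,s}$ persist among the $\delta_t(x_p)$ in $G_j$, which is not known to be a Lie group?) and its continuity (needed so that the kernel is closed and the quotient argument applies) are exactly the content of the hard direction of \cite[Proposition 2.1]{LDLM19}, i.e.\ of Proposition \ref{prop:filtration_equiv_to_generating_V1}, which you never invoke and which, as stated, does not literally yield your claim (it produces a filtration, not the assertion that a finite horizontal generating set gives a single Carnot subgroup without taking closures). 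So as written the argument is incomplete: either prove these two lemmas (or cite precise statements of \cite{LDLM19} containing them), or---simplest---replace the second half by the paper's route and verify hypothesis ii) of Proposition \ref{prop:filtration_equiv_to_generating_V1} for $G$ directly. The remaining ingredients (reduction to $I=\N$, countability and density of your family $\mathcal F$, the diagonal assembly) are correct.
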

The complete picture of the relation between nilpotency of the groups $ G_i $ and non-degeneracy of the direct system remains unclear; we record the following open problems.
\begin{itemize}
	\item[\({\rm P2})\)] Can we remove the nilpotency assumption in the statement of Theorem
	\ref{thm:DL_of_infdim_Carnots}?
	\item[\({\rm P3})\)] If $\big(\{G_i\}_{i\in I},\{\varphi_{ij}\}_{i\leq j}\big)$ is a direct system of infinite-dimensional Carnot groups such that the nilpotency steps of the groups $ G_i $ are uniformly bounded, is it non-degenerate?
\end{itemize}
\medskip
Let us also mention that inverse limits of CMSGs can be treated
in a similar way. Indeed, inverse limits of scalable groups always exist (see Theorem \ref{thm:IL_scalable_gps}), but at the level of CMSGs, the expected construction of an inverse limit might fail to work. We characterize
when inverse limits exist and are of the desired form (Theorem \ref{thm:IL_MSG}) in terms of a suitable non-degeneracy condition (Definition \ref{def:non-deg_IS}). Appendix \ref{ss:IL_CMSG} will be dedicated to inverse limits of CMSGs,
but we do not insist further on them, as in this paper we will
not provide any of their possible applications. For the same reason, we leave the study of general limits and colimits in this category for future research.
\subsection{A Rademacher-type theorem}
We study the stability of CMSGs satisfying a Rademacher theorem under taking direct limits.  One of the main results that have been achieved in \cite{LDLM19}, concerning infinite-dimensional Carnot groups, is a variant of the classical Rademacher's theorem on the almost everywhere differentiability of Lipschitz functions. Roughly speaking, the result says that each Lipschitz function on a given infinite-dimensional Carnot group is G\^{a}teaux differentiable at \emph{almost every} point, where the notion of `negligible set' is expressed in terms of the filtration by Carnot subgroups. A key ingredient in the proof is the celebrated Pansu--Rademacher theorem for Lipschitz functions on Carnot groups \cite{Pansu89}.

In this paper, we say that a couple \((G,\mathcal N)\) -- where \(G\) is a CMSG and \(\mathcal N\) is a \emph{\(\sigma\)-ideal of null sets}, in the sense of
Definition \ref{def:family_null_sets} -- has the \emph{Rademacher property} if for any Lipschitz function \(f\colon G\to\R\) the set of points where \(f\) is
not G\^{a}teaux differentiable (in the sense of Definition \ref{def:Gateaux_diff}) belongs to \(\mathcal N\); see Definition \ref{def:Rademacher_prop}. The main result regarding the Rademacher property reads as follows:
\begin{theorem}[Rademacher theorem for direct limits]
\label{thm:Rademacher_property}
Let \(\big\{(G_i,\mathcal N_i)\big\}_{i\in I}\) be a countable direct system  of complete metric scalable groups  having the Rademacher property. If the system admits a direct limit $ G $, then $ (G,\mathcal N) $ has the Rademacher property for a natural \(\sigma\)-ideal \(\mathcal N\) of null sets
(depending on the direct system).
\end{theorem}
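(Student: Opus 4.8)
The plan is to transport the finite-level Rademacher theorems along the canonical morphisms of the direct limit. Since $G$ is assumed to exist, Theorem~\ref{thm:DL_equiv_non-deg} tells us that the direct system is non-degenerate and that $G$ is the completion of the metric limit; in particular $G$ is a genuine CMSG (so multiplication, inversion and the dilations on $G$ are continuous), it comes with $1$-Lipschitz morphisms $\varphi_i\colon G_i\to G$ satisfying $\varphi_j\circ\varphi_{ij}=\varphi_i$, and $D\coloneqq\bigcup_{i\in I}\varphi_i(G_i)$ is dense in $G$. As the natural $\sigma$-ideal I would take (a variant of)
\[
\mathcal N\coloneqq\Big\{N\sus G\ \Big|\ N\sus(G\setminus D)\cup\bigcup_{i\in I}\varphi_i(S_i)\ \text{ for some }S_i\in\mathcal N_i\Big\}.
\]
It is immediate that $\mathcal N$ is closed under subsets and — using that $I$ is countable — under countable unions, that $G\setminus D\in\mathcal N$, that $\varphi_i(S)\in\mathcal N$ whenever $S\in\mathcal N_i$, and that $\mathcal N$ is invariant under the dilations of $G$ (since $\delta_\lambda D=D$ and $\varphi_i\circ\delta_\lambda=\delta_\lambda\circ\varphi_i$); invariance under left translations by elements of $D$ follows similarly, writing $\varphi_j(h)\,\varphi_i(S)=\varphi_l\big(\varphi_{jl}(h)\,\varphi_{il}(S)\big)$ for $l\geq i,j$ and using the left-invariance of $\mathcal N_l$.

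Now fix a Lipschitz function $f\colon G\to\R$ and set $f_i\coloneqq f\circ\varphi_i\colon G_i\to\R$, which is Lipschitz with $\mathrm{Lip}(f_i)\leq\mathrm{Lip}(f)$ since $\varphi_i$ is $1$-Lipschitz. By the Rademacher property of $(G_i,\mathcal N_i)$, the set $E_i\sus G_i$ of points at which $f_i$ fails to be G\^ateaux differentiable lies in $\mathcal N_i$. Let $x\in D$ and pick a compatible thread for $x$: some $i_0\in I$ and elements $v_i\in G_i$ for $i\geq i_0$ with $\varphi_{ij}(v_i)=v_j$ and $\varphi_i(v_i)=x$. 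Suppose $v_i\notin E_i$ for every $i\geq i_0$, and let $L_i\colon G_i\to\R$ be the G\^ateaux differential of $f_i$ at $v_i$. From $f_i=f_j\circ\varphi_{ij}$ and the identity $\varphi_{ij}(v_i\delta_tu)=v_j\,\delta_t\big(\varphi_{ij}(u)\big)$ (valid because $\varphi_{ij}$ is a homogeneous homomorphism) one checks that $L_j\circ\varphi_{ij}$ is also a G\^ateaux differential of $f_i$ at $v_i$; by uniqueness of the differential, $L_i=L_j\circ\varphi_{ij}$ for $i\leq j$. Hence $\{L_i\}_{i\geq i_0}$ is a compatible family and, since $\{i\geq i_0\}$ is cofinal in $I$, it defines a homogeneous homomorphism $\ell$ on the algebraic direct limit $G'=\varinjlim G_i$. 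Because the G\^ateaux differential of a $C$-Lipschitz function is again $C$-Lipschitz — here one uses the scaling $\sfd_j(\delta_tu,e)=t\,\sfd_j(u,e)$ of the metric under dilations — choosing for any $p,q\in G'$ representatives at a common level that nearly realise $\sfd'(p,q)$ shows that $\ell$ is $\mathrm{Lip}(f)$-Lipschitz for the pseudodistance $\sfd'$. Therefore $\ell$ descends to the metric limit and extends to a Lipschitz map $L\colon G\to\R$; since $D$ is dense and the operations on $G$ are continuous, $L$ is again a homogeneous homomorphism.

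It remains to check that $L$ is the G\^ateaux differential of $f$ at $x$. For a direction $w=\varphi_i(u)\in D$ with $i\geq i_0$ we have $x\,\delta_tw=\varphi_i(v_i)\,\delta_t\big(\varphi_i(u)\big)=\varphi_i(v_i\delta_tu)$, so
\[
\frac{f(x\,\delta_tw)-f(x)}{t}=\frac{f_i(v_i\delta_tu)-f_i(v_i)}{t}\ \longrightarrow\ L_i(u)=L(w)\qquad\text{as }t\to0 .
\]
For an arbitrary $w\in G$ pick $w_n\in D$ with $w_n\to w$; by left-invariance of $\sfd$ and the scaling $\sfd(\delta_ta,\delta_tb)=t\,\sfd(a,b)$,
\[
\left|\frac{f(x\,\delta_tw)-f(x)}{t}-\frac{f(x\,\delta_tw_n)-f(x)}{t}\right|\leq\mathrm{Lip}(f)\,\frac{\sfd(\delta_tw,\delta_tw_n)}{t}=\mathrm{Lip}(f)\,\sfd(w,w_n)
\]
uniformly in $t$, while $|L(w)-L(w_n)|\leq\mathrm{Lip}(f)\,\sfd(w,w_n)$; a routine $3\eps$ argument then passes the convergence from $w_n$ to $w$. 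Consequently $f$ is G\^ateaux differentiable at every point of $D$ whose thread avoids all the $E_i$, so the set where $f$ is not G\^ateaux differentiable is contained in $(G\setminus D)\cup\bigcup_{i\in I}\varphi_i(E_i)$, which lies in $\mathcal N$ by construction.

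The genuinely delicate step, and the one I expect to require real work, is to pin down $\mathcal N$: one must show that (the appropriate version of) $\mathcal N$ is a $\sigma$-ideal of null sets in the full sense of Definition~\ref{def:family_null_sets} — in particular that it is \emph{proper}, i.e.\ $G\notin\mathcal N$ (equivalently, that $D$ cannot be covered by $\bigcup_i\varphi_i(S_i)$ with $S_i\in\mathcal N_i$), and, if Definition~\ref{def:family_null_sets} demands it, invariant under left translations by \emph{all} of $G$ rather than only by the dense subgroup $D$. Properness is where the countability of $I$ is really used — it makes ``a countable union of $\mathcal N_i$-null pieces'' genuinely small — and it may force a more careful choice of $\mathcal N$ than the one written above, especially since the morphisms $\varphi_i$ and $\varphi_{ij}$ need not be injective. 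By contrast, the analytic core — assembling the finite-level differentials into $L$ and upgrading differentiability along directions in $D$ to full G\^ateaux differentiability — is comparatively soft, relying only on the uniform bound $\mathrm{Lip}(f_i)\leq\mathrm{Lip}(f)$ and the homogeneity of the metrics under dilations.
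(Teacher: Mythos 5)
Your proposal has a genuine gap, and it sits exactly where the real content of the theorem lies: the choice of \(\mathcal N\) and the set of points at which differentiability is proved. You only establish G\^ateaux differentiability at points of the dense subgroup \(D=\bigcup_i\varphi_i(G_i)\), and then you put all of \(G\setminus D\) into the ideal by fiat. This does not match Definition \ref{def:family_null_sets}: your class is not shown to be left-invariant under translations by arbitrary \(g\in G\) (only by elements of \(D\)), and the sets \(G\setminus D\) and \(\varphi_i(S_i)\) need not even be Borel since the \(\varphi_i\) are neither injective nor proper in general -- you flag both issues but do not resolve them, and resolving them is precisely the missing work. More importantly, even if one patched the formal requirements, the resulting statement would be far weaker than the paper's and would not recover the motivating theorem of \cite{LDLM19}: in the model case of an infinite-dimensional Carnot group (say a separable Banach space) filtrated by finite-dimensional subgroups, \(D\) is a countable union of finite-dimensional subgroups, hence itself negligible in every reasonable sense, so declaring its complement null inverts the intended meaning of ``almost everywhere''. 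The paper's proof instead proves differentiability at points \(p\in G\) that may lie in no \(\varphi_i(G_i)\): it sets \(N_i\coloneqq\{p\in G\,:\,e_i\in{\rm ND}(f\circ L_p\circ\varphi_i)\}\), shows \(\varphi_i^{-1}(qN_i)\subseteq{\rm ND}(f\circ L_q^{-1}\circ\varphi_i)\in\mathcal N_i\) for every \(q\in G\) (so translation-invariance is built into the ideal \eqref{eq:def_N}), and then, for \(p\notin\bigcup_i N_i\), assembles the differentials \(\d_{e_i}(f\circ L_p\circ\varphi_i)\) into a map \(F_p\) on \(D\) whose well-posedness is automatic from the intrinsic formula \(\d_{e_i}(f\circ L_p\circ\varphi_i)(g_i)=\lim_\lambda\big(f(p\,\delta_\lambda(g))-f(p)\big)/\lambda\), and extends it by the equi-Lipschitz incremental ratios. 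Your construction of \(L\) at points of \(D\) is essentially this argument restricted to \(D\), but the passage from ``differentiable on \(D\) minus \(\bigcup_i\varphi_i(E_i)\)'' to a Rademacher statement with a meaningful ideal on all of \(G\) is the part you have not supplied.

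A secondary, fixable error: you deduce from the existence of the direct limit, via Theorem \ref{thm:DL_equiv_non-deg}, that the system is non-degenerate and that \(G\) is the completion of the metric limit. That implication is not available -- whether a degenerate system can admit a direct limit is exactly the paper's open problem P1 -- and your construction of \(L\) uses it when you let \(\ell\) ``descend to the metric limit and extend to \(G\)'' (well-definedness on \(D\) does not follow from Lipschitzness with respect to \(\sfd'\) alone, since distinct classes in the algebraic limit may have the same image in \(G\)). The paper's proof avoids this by using only the \(1\)-Lipschitzness of the \(\varphi_i\) and the density of \(\bigcup_i\varphi_i(G_i)\) in \(G\), which holds for any direct limit by Remark \ref{rmk:density_claim}; you can repair this step by defining the candidate differential directly on \(D\subseteq G\) through the incremental ratios, as the paper does, rather than through the metric limit.
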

Theorem \ref{thm:Rademacher_property} generalizes the Rademacher's theorem for infinite-dimensional Carnot groups proven in \cite{LDLM19}. Indeed, by Proposition \ref{prop:infdim_Carnot_is_DL}, if $ G $ is an infinite-dimensional Carnot group with a filtration $ (G_i)_{i\in \N} $ by Carnot groups, then $ G $ is obtained as the direct limit of $ (G_i)_{i\in \N}  $. The theorem in \cite{LDLM19} is recovered by choosing $ (\mathcal N_i)_{i\in\N} $ to be the $ \sigma $-ideals of null sets of some Haar measures on the Carnot groups $ G_i $, where the Rademacher property of the pairs $ (G_i,\mathcal N_i) $ is given by \cite{Pansu89}.

Our definition of the \(\sigma\)-ideal \(\mathcal N\) on
the direct limit \(G\) closely follows along the construction
presented in \cite{LDLM19}, which is in turn inspired by what done
in \cite{Aronszajn1976}.
However, while in \cite{Aronszajn1976,LDLM19} it is shown
that \(\mathcal N\) is non-trivial (in the sense that its elements
have empty interior, so in particular every Lipschitz function has
at least one differentiability point), it seems that -- at our level
of generality -- nothing about \(\mathcal N\) can be said. It would be interesting to find other sufficient conditions
for the \(\sigma\)-ideal \(\mathcal N\) to be non-trivial.

\subsection{Structure of the paper}
In Sections \ref{ss:basic_notions} and \ref{ss:DL_IL} we recall some results on Cauchy-continuity and the notions of direct and inverse limits in the categorical sense, respectively. The existence of limits in the category of scalable groups is proven in Section \ref{ss:scalable_gps}. In Section \ref{ss:sc_gps_dist} we prove some auxiliary continuity results on scalable groups with compatible distances. Sections \ref{ss:DL_MSG} and \ref{ss:DL_CMSG} are devoted to the direct limits of MSGs and CMSGs, respectively. Direct limits of infinite-dimensional Carnot groups are discussed in Section \ref{s:inf-dim_Carnots}.
The problem of stability of CMSGs satisfying a Rademacher theorem under taking direct limits will be addressed in Section \ref{s:Rademacher}. Finally, in Appendix \ref{ss:IL_CMSG} we briefly investigate the existence of inverse limits of (C)MSGs.
\section{Preliminaries}
\label{s:prelis}
\subsection{Basic notions in metric geometry}
\label{ss:basic_notions}
Let \((I,\leq)\) be a \emph{directed set}, meaning that \(\leq\) is
a preorder (i.e., a reflexive and transitive binary relation)
on \(I\) with the property that any two elements have an upper bound
(i.e., for any \(i,j\in I\) there exists \(k\in I\) such that \(i\leq k\)
and \(j\leq k\)).

A \emph{net} indexed over \(I\) in a given set \(\X\) is any map
\(x\colon I\to\X\). We shall often denote \(x\) by \(\{x_i\}_{i\in I}\).
If \(\X\) is a topological space, then the net \(\{x_i\}_{i\in I}\) is said
to \emph{converge} to a point \(\bar x\in\X\) provided for every neighborhood
\(U\) of \(\bar x\) there exists \(i_0\in I\) such that \(x_i\in U\) for every
\(i\in I\) such that \(i\geq i_0\). In such case, we write
\(\bar x=\lim_{i\in I}x_i\).
\begin{definition}[Cauchy-continuity]
Let \((\X,\sfd_\X)\), \((\Y,\sfd_\Y)\) be pseudometric spaces.
Let \(\varphi\colon\X\to\Y\) be continuous. Then we say
that \(\varphi\) is \emph{Cauchy-continuous} provided it
satisfies the following property:
\[
(x_n)_n\subseteq\X\text{ is Cauchy }\quad\Longrightarrow\quad
\big(\varphi(x_n)\big)_n\subseteq\Y\text{ is Cauchy.}
\]
\end{definition}
By a \emph{metric completion} of a given metric space \((\X,\sfd)\)
we mean any couple \((\bar\X,\iota)\), where \((\bar\X,\bar\sfd)\) is
a complete metric space, while \(\iota\colon\X\hookrightarrow\bar\X\)
is an isometric embedding having dense image. It holds that \((\bar\X,\iota)\)
is uniquely determined up to unique isomorphism, meaning that for any other
couple \((\bar\X',\iota')\) satisfying the same property there exists a unique
isometric bijection \(\Phi\colon\bar\X\to\bar\X'\) such that \(\Phi\circ\iota=\iota'\).
Occasionally, we will implicitly identify \(\X\) with the subspace \(\iota(\X)\)
of \(\bar\X\), and we will just say that \(\bar\X\) is \emph{the} metric completion
of \(\X\) (without mentioning the embedding \(\iota\)).
\medskip

Let us recall the following well-known fact. For the reader's convenience,
we sketch its proof.
\begin{theorem}[Extension theorem]\label{thm:extension_to_completion}
Let \((\X,\sfd_\X)\), \((\Y,\sfd_\Y)\) be metric spaces. Suppose
\(\Y\) is complete. Let \(\varphi\colon\X\to\Y\) be a continuous map.
Denote by \(\bar\X\) the metric completion of \(\X\).
Then \(\varphi\) admits a (unique) continuous
extension \(\bar\varphi\colon\bar\X\to\Y\) if
and only if it is Cauchy-continuous.
\end{theorem}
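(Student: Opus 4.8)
The plan is to prove the two implications separately, the forward one being essentially trivial and the backward one being the substance of the statement.

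First I would dispose of the "only if" direction. Suppose $\varphi$ extends to a continuous map $\bar\varphi\colon\bar\X\to\Y$. Given a Cauchy sequence $(x_n)_n$ in $\X$, it converges to some $\bar x\in\bar\X$ (since $\bar\X$ is complete), hence $\varphi(x_n)=\bar\varphi(\iota(x_n))\to\bar\varphi(\bar x)$ by continuity of $\bar\varphi$; a convergent sequence in the metric space $\Y$ is Cauchy, so $\varphi$ is Cauchy-continuous. Uniqueness of the extension is immediate from density of $\iota(\X)$ in $\bar\X$ together with the fact that $\Y$ is Hausdorff: two continuous maps agreeing on a dense set agree everywhere.

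The bulk of the argument is the "if" direction. Assume $\varphi$ is Cauchy-continuous. I would define $\bar\varphi$ pointwise: for $\bar x\in\bar\X$ pick any sequence $(x_n)_n\sus\X$ with $\iota(x_n)\to\bar x$ (such a sequence exists by density); then $(x_n)_n$ is Cauchy in $\X$, so by Cauchy-continuity $(\varphi(x_n))_n$ is Cauchy in $\Y$, and by completeness of $\Y$ it has a limit, which I call $\bar\varphi(\bar x)$. The first check is \emph{well-posedness}: if $(x_n)_n$ and $(y_n)_n$ both represent $\bar x$, then the interleaved sequence $x_1,y_1,x_2,y_2,\dots$ is again Cauchy in $\X$ (its image under $\iota$ converges to $\bar x$), so its image under $\varphi$ is Cauchy, forcing $\lim_n\varphi(x_n)=\lim_n\varphi(y_n)$. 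In particular, taking the constant sequence for $\bar x=\iota(x)$ shows $\bar\varphi\circ\iota=\varphi$, so $\bar\varphi$ genuinely extends $\varphi$.

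It remains to prove \emph{continuity} of $\bar\varphi$, and this is the one place where a small amount of care is required, since $\varphi$ is only assumed continuous, not uniformly continuous, so an $\eps$--$\delta$ argument must be routed through the defining sequences. I would argue sequentially: let $\bar x^{(k)}\to\bar x$ in $\bar\X$; I want $\bar\varphi(\bar x^{(k)})\to\bar\varphi(\bar x)$. The natural move is a diagonal argument — for each $k$ choose $x^{(k)}_{n}\in\X$ with $\iota(x^{(k)}_{n})\to\bar x^{(k)}$ as $n\to\infty$ and $\varphi(x^{(k)}_{n})\to\bar\varphi(\bar x^{(k)})$, then extract $n_k$ so that $\sfd_{\bar\X}(\iota(x^{(k)}_{n_k}),\bar x^{(k)})\le 1/k$ and $\sfd_\Y(\varphi(x^{(k)}_{n_k}),\bar\varphi(\bar x^{(k)}))\le 1/k$; then $\iota(x^{(k)}_{n_k})\to\bar x$, so $(x^{(k)}_{n_k})_k$ is Cauchy in $\X$, so by Cauchy-continuity $(\varphi(x^{(k)}_{n_k}))_k$ is Cauchy in $\Y$, hence convergent, and its limit is $\bar\varphi(\bar x)$ (it is a representing sequence for $\bar x$); since $\sfd_\Y(\varphi(x^{(k)}_{n_k}),\bar\varphi(\bar x^{(k)}))\to 0$, also $\bar\varphi(\bar x^{(k)})\to\bar\varphi(\bar x)$. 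This completes the proof. The only real subtlety — the main obstacle — is resisting the temptation to prove uniform continuity: Cauchy-continuity does \emph{not} upgrade to uniform continuity on a general (non-totally-bounded) metric space, so the continuity of $\bar\varphi$ must be extracted directly from the Cauchy-sequence condition via the interleaving/diagonal trick above rather than from a modulus of continuity.
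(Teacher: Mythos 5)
Your proposal is correct and follows essentially the same route as the paper's proof: the necessity part is identical, and the sufficiency part uses the same construction of \(\bar\varphi\) via representing sequences, with your interleaving argument for well-posedness and your diagonal extraction for continuity simply spelling out the steps the paper leaves as ``easily checked'' and a ``standard diagonalization argument.'' Your closing remark that Cauchy-continuity does not upgrade to uniform continuity on a general metric space is also accurate and consistent with the paper's Lemma \ref{lemma:Cauchy_continuity_vs_moduli}, which only yields uniform continuity on totally bounded sets.
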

\begin{proof}
Let us denote by \(\iota\colon\X\hookrightarrow\bar\X\) the isometric embedding
that comes with the metric completion.\\
{\color{blue}\textsc{Necessity.}}
Suppose \(\varphi\) admits a continuous extension \(\bar\varphi\colon\bar\X\to\Y\).
Let \((x_n)_n\) be a Cauchy sequence in \(\X\). Then there exists \(\bar x\in\bar\X\)
such that \(\iota(x_n)\to\bar x\). By using the continuity of the map \(\bar\varphi\),
we deduce that \(\varphi(x_n)=\bar\varphi\big(\iota(x_n)\big)\to\bar\varphi(\bar x)\).
In particular, \(\big(\varphi(x_n)\big)_n\) is a Cauchy sequence in \(\Y\).\\
{\color{blue}\textsc{Sufficiency.}} Suppose \(\varphi\) is Cauchy-continuous. We define
the map \(\bar\varphi\colon\bar\X\to\Y\) as follows: given any \(\bar x\in\bar\X\),
we define \(\bar\varphi(\bar x)\in\Y\) as the limit of \(\varphi(x_n)\) as
\(n\to\infty\), where \((x_n)_n\subseteq\X\) is any sequence satisfying
\(\iota(x_n)\to\bar x\). Notice that such a sequence \((x_n)_n\) exists (by density
of \(\iota(\X)\) in \(\bar\X\)) and is Cauchy, the sequence
\(\big(\varphi(x_n)\big)_n\subseteq\Y\) is Cauchy (thanks to the Cauchy-continuity
of \(\varphi\)), and the limit \(\lim_n\varphi(x_n)\in\Y\) exists because of the
completeness of \(\Y\). The well-posedness of \(\bar\varphi\) can be
easily checked, while its continuity follows from a standard diagonalization argument.
To show that the map \(\bar\varphi\) extends \(\varphi\), just consider for any point
\(\bar x=\iota(x)\in\iota(\X)\) the sequence \((x_n)_n\) that is constantly equal to
\(x\). Finally, the uniqueness of \(\bar\varphi\) is granted by the density of
\(\iota(\X)\) in \(\bar\X\).
\end{proof}
\begin{definition}[Modulus of continuity]\label{def:modulus_cont}
Let \((\X,\sfd_\X)\) and \((\Y,\sfd_\Y)\) be two pseudometric spaces.
Fix any map \(\varphi\colon\X\to\Y\). Then for any \(x\in\X\) and
\(\eps>0\) we define the quantity \(\omega_\varphi(x;\eps)\in[0,+\infty]\) as
\[
\omega_\varphi(x;\eps)\coloneqq\sup\Big\{\eta>0\;\Big|\;
\sfd_\Y\big(\varphi(x),\varphi(x')\big)<\eps\,\text{ for every }
x'\in\X\text{ with }\sfd_\X(x,x')<\eta\Big\}.
\]
The function \(\omega_\varphi(x;\cdot)\colon(0,+\infty)\to[0,+\infty]\) is
said to be the \emph{modulus of continuity} of \(\varphi\) at \(x\).
\end{definition}
Notice that \(\varphi\) is continuous at \(x\) if and only if
\(\omega_\varphi(x;\eps)>0\) for every \(\eps>0\). Similarly,
\(\varphi\) is uniformly continuous on a set \(B\subseteq\X\) if and only
if \(\inf_{x\in B}\omega_\varphi(x;\eps)>0\) holds for every \(\eps>0\).
\medskip

Recall that a subset \(T\) of a pseudometric space \((\X,\sfd)\) is said
to be \emph{totally bounded} provided for any radius \(r>0\) we can find
finitely many points \(x_1,\ldots,x_n\in T\) such that
\(T\subseteq\bigcup_{i=1}^n B_r(x_i)\).
\begin{lemma}[Characterizations of Cauchy-continuity]
\label{lemma:Cauchy_continuity_vs_moduli}
Let \((\X,\sfd_\X)\), \((\Y,\sfd_\Y)\) be pseudometric spaces and let  \(\varphi\colon\X\to\Y\) be a map.
Then the following are equivalent. 
\begin{enumerate}[label=\rm\roman{*})]
	\item \label{it:Cauchy} The map $ \varphi $ is Cauchy-continuous;
	\item \label{it:unif_cont} $ \varphi $ is uniformly continuous on each totally bounded subset of \(\X\);
\item \label{it:f(T)}$ \varphi $ maps all totally bounded subsets of $ \X $ to totally bounded subsets of $ \Y $.
\end{enumerate}
\end{lemma}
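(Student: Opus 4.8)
The plan is to prove the three conditions equivalent by a cycle of implications, \ref{it:Cauchy} $\Rightarrow$ \ref{it:unif_cont} $\Rightarrow$ \ref{it:f(T)} $\Rightarrow$ \ref{it:Cauchy}, using throughout two elementary facts about a totally bounded subset $T$ of a pseudometric space: every sequence in $T$ admits a Cauchy subsequence, and the range of any Cauchy sequence is itself totally bounded.

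For \ref{it:Cauchy} $\Rightarrow$ \ref{it:unif_cont} I would argue by contraposition. If $\varphi$ is not uniformly continuous on some totally bounded $T\subseteq\X$, there are $\eps>0$ and sequences $(a_n)_n,(b_n)_n\subseteq T$ with $\sfd_\X(a_n,b_n)\to 0$ but $\sfd_\Y\big(\varphi(a_n),\varphi(b_n)\big)\geq\eps$ for all $n$; extracting a subsequence along which $(a_n)_n$ is Cauchy (possible since $T$ is totally bounded) forces $(b_n)_n$ to be Cauchy too, so the interleaved sequence $a_{n_1},b_{n_1},a_{n_2},b_{n_2},\dots$ is Cauchy in $\X$ while its $\varphi$-image is not, contradicting \ref{it:Cauchy}. (A slicker alternative, after passing to metric quotients: by Theorem~\ref{thm:extension_to_completion} a Cauchy-continuous map extends continuously to the completion of $\X$, and a continuous map is uniformly continuous on each compact set, in particular on the closure of $T$ inside that completion.) For \ref{it:unif_cont} $\Rightarrow$ \ref{it:f(T)}: given a totally bounded $T$ and $\eps>0$, uniform continuity of $\varphi|_T$ provides $\delta>0$ with $\sfd_\Y\big(\varphi(x),\varphi(x')\big)<\eps$ whenever $x,x'\in T$ and $\sfd_\X(x,x')<\delta$; covering $T$ by finitely many balls $B_\delta(x_i)$ with $x_i\in T$ yields $\varphi(T)\subseteq\bigcup_i B_\eps\big(\varphi(x_i)\big)$, so $\varphi(T)$ is totally bounded.

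The implication \ref{it:f(T)} $\Rightarrow$ \ref{it:Cauchy} is the step I expect to be the main obstacle. Given a Cauchy sequence $(x_n)_n\subseteq\X$, its range is totally bounded, hence by \ref{it:f(T)} the set $\{\varphi(x_n):n\}$ is totally bounded in $\Y$; the point is to upgrade this to the assertion that the \emph{sequence} $(\varphi(x_n))_n$ is Cauchy. The natural attempt is by contradiction: if $(\varphi(x_n))_n$ is not Cauchy, extract $\eps>0$ and indices with $\sfd_\Y\big(\varphi(x_{n_k}),\varphi(x_{m_k})\big)\geq\eps$, and then manufacture from this separation a totally bounded subset of $\X$ whose $\varphi$-image fails to be totally bounded. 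Carrying this out seems to require using more than the mere total boundedness of the range of $(x_n)_n$ --- e.g.\ that its tails have vanishing diameter --- and, if needed, passing to the metric completions of $\X$ and $\Y$, in which totally bounded sets have compact closure, so that \ref{it:f(T)} can be brought to bear in full strength. I expect essentially all of the work, and whatever subtlety is present, to sit in this last implication.
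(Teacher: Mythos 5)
Your arguments for \ref{it:Cauchy}$\Rightarrow$\ref{it:unif_cont} and \ref{it:unif_cont}$\Rightarrow$\ref{it:f(T)} are correct and essentially the paper's own: the paper phrases the first via the modulus of continuity of Definition \ref{def:modulus_cont}, picking witnesses of the failure of uniform continuity and interleaving them with a Cauchy subsequence extracted from the totally bounded set, and proves the second by the same finite $\delta$-net argument you give. (One small point: in the paper's convention, ``uniformly continuous on $T$'' means $\inf_{x\in T}\omega_\varphi(x;\eps)>0$, where the nearby point ranges over all of $\X$, not just $T$; your contrapositive as written only yields uniform continuity of the restriction $\varphi|_T$, but the same interleaving works verbatim when $b_n\in\X\setminus T$, since $\sfd_\X(a_n,b_n)\to0$ still forces $(b_n)_n$ to be Cauchy.)

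The genuine gap is \ref{it:f(T)}$\Rightarrow$\ref{it:Cauchy}, which you leave as an unfinished strategy --- and your suspicion that all the subtlety sits there is vindicated in the strongest possible way: that implication is false, so no elaboration of your contradiction scheme can close it. The paper dispatches it in one line by asserting that a sequence is Cauchy if and only if its range is totally bounded; only the ``only if'' half is true (the alternating sequence $0,1,0,1,\dots$ has totally bounded range but is not Cauchy). Concretely, take $\X=\Y=\R$ and $\varphi$ the indicator function of $(0,+\infty)$: every image set lies in $\{0,1\}$, so \ref{it:f(T)} holds, yet the Cauchy sequence $x_n=(-1)^n/n$ is sent to a non-Cauchy one. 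Continuity does not rescue the implication either: $\varphi(x)=\sin(1/x)$ on $\X=(0,1]$ maps everything into $[-1,1]$ but sends the Cauchy sequence $x_n=2/(n\pi)$ to a non-Cauchy sequence. The correct statement is therefore \ref{it:Cauchy}$\Leftrightarrow$\ref{it:unif_cont}, together with the one-way implication to \ref{it:f(T)}; these are exactly the parts you proved, and also the only directions actually invoked later in the paper (in Lemma \ref{lem:cont_operations} \ref{it:Op_Cauchy}, \ref{it:delta_Cauchy} and in Lemma \ref{lem:nilpotent_inv_cauchy}). Note finally that \ref{it:unif_cont}$\Rightarrow$\ref{it:Cauchy} is immediate without any cycle: the range of a Cauchy sequence is totally bounded, and uniform continuity on that range transfers the Cauchy property to the image.
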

\begin{proof}
\ref{it:Cauchy}$ \implies $ \ref{it:unif_cont}: Suppose \(\varphi\) is Cauchy-continuous.
We argue by contradiction: assume there exists a totally bounded set
\(T\subseteq\X\) whereon \(\varphi\) is not uniformly continuous,
so that \(\inf_{x\in T}\omega_\varphi(x;\eps)=0\) for some \(\eps>0\).
This means that we can find a sequence \((x_{2n})_n\subseteq T\) such
that \(\lim_n\omega_\varphi(x_{2n};\eps)=0\). Since \(T\) is totally
bounded, we can assume (up to taking a not-relabeled subsequence)
that \((x_{2n})_n\) is Cauchy. Given any \(n\in\N\), we deduce from the
very definition of \(\omega_\varphi(x_{2n};\eps)\) that there is an element
\(x_{2n+1}\in\X\) such that \(\sfd_\X(x_{2n},x_{2n+1})<\omega_\varphi(x_{2n};\eps)+1/n\)
and \(\sfd_\Y\big(\varphi(x_{2n}),\varphi(x_{2n+1})\big)\geq\eps\).
Therefore, \((x_n)_n\) is a Cauchy sequence but \(\big(\varphi(x_n)\big)_n\)
is not, thus contradicting the assumption that \(\varphi\) is Cauchy-continuous.
Then \(\varphi\) is uniformly continuous on each totally bounded set.\\
\ref{it:unif_cont}$ \implies $ \ref{it:f(T)}: Let $ \eps > 0 $ and let $ T\sus \X $ be totally bounded. Pick $ \delta > 0 $ such that, for all $ x\in T $, $ \sfd_\Y\big(\varphi(x),\varphi(y)\big)<\eps $ whenever $ \sfd_\X(x,y)<\delta $. Pick $ x_1,\dots,x_n \in T $ such that $ T \sus \bigcup_{i=1}^n B_\delta(x_i) $. Then $ \varphi(T)\sus \bigcup_{i=1}^n B_\eps(\varphi(x_i)) $, proving that $ \varphi(T) $ is totally bounded by the arbitrariness of $ \eps $.\\
 \ref{it:f(T)}$ \implies $ \ref{it:Cauchy}: The claim follows from the observation that a sequence $ (x_n)_n \sus \X $ is Cauchy if and only if it is totally bounded as a subset of $ \X $.
\end{proof}
Given two pseudometric spaces \((\X,\sfd_\X)\) and \((\Y,\sfd_\Y)\),
we always endow their product \(\X\times\Y\) with the pseudodistance
\(\sfd_{\X\times\Y}\), which is defined as
\[
\sfd_{\X\times\Y}\big((x,y),(x',y')\big)\coloneqq
\sqrt{\sfd_\X(x,x')^2+\sfd_\Y(y,y')^2}\quad\text{ for every }
(x,y),(x',y')\in\X\times\Y.
\]
If both \(\sfd_\X\) and \(\sfd_\Y\) are distances, then \(\sfd_{\X\times\Y}\)
is a distance as well. Observe also that the topology induced by
\(\sfd_{\X\times\Y}\) on \(\X\times\Y\) coincides with the product of
the topologies induced by \(\sfd_\X\) and \(\sfd_\Y\).
\subsection{Reminder on direct and inverse limits}\label{ss:DL_IL}
Let us briefly recall the definitions of direct limit and
inverse limit in an arbitrary category. For a thorough account
on this topic, we refer e.g.\ to the classical reference
\cite{MacLane98}.
\medskip

Fix a directed set \((I,\leq)\) and an arbitrary category \(\mathscr C\).
By a \emph{direct system} in \(\mathscr C\) over \(I\) we mean a couple
\(\big(\{X_i\}_{i\in I},\{\varphi_{ij}\}_{i\leq j}\big)\),
where \(\{X_i\,:\,i\in I\}\) is a family of objects of \(\mathscr C\),
while \(\varphi_{ij}\colon X_i\to X_j\) is a morphism for every
\(i,j\in I\) with \(i\leq j\), such that the following properties hold:
\begin{itemize}
\item[\({\rm DS}_1)\)] \(\varphi_{ii}\) is the identity of \(X_i\)
for every \(i\in I\).
\item[\({\rm DS}_2)\)] \(\varphi_{ik}=\varphi_{jk}\circ\varphi_{ij}\)
holds for every \(i,j,k\in I\) with \(i\leq j\leq k\).
\end{itemize}
A given couple \(\big(X,\{\varphi_i\}_{i\in I}\big)\) -- where
\(X\) is an object of \(\mathscr C\) and \(\varphi_i\) is a
morphism \(\varphi_i\colon X_i\to X\) for every \(i\in I\) --
is said to be the \emph{direct limit} of
\(\big(\{X_i\}_{i\in I},\{\varphi_{ij}\}_{i\leq j}\big)\)
provided it holds that:
\begin{itemize}
\item[\({\rm DL}_1)\)] \(\big(X,\{\varphi_i\}_{i\in I}\big)\) is a
\emph{target} for the direct system
\(\big(\{X_i\}_{i\in I},\{\varphi_{ij}\}_{i\leq j}\big)\),
i.e., the diagram
\[\begin{tikzcd}
X_i \arrow[r,"\varphi_{ij}"] \arrow[rd,swap,"\varphi_i"] &
X_j \arrow[d,"\varphi_j"] \\
& X
\end{tikzcd}\]
is commutative for every \(i,j\in I\) with \(i\leq j\).
\item[\({\rm DL}_2)\)] Given any target \(\big(Y,\{\psi_i\}_{i\in I}\big)\) for
the direct system \(\big(\{X_i\}_{i\in I},\{\varphi_{ij}\}_{i\leq j}\big)\),
there exists a unique morphism \(\Phi\colon X\to Y\) such that the diagram
\[\begin{tikzcd}
X_i \arrow[r,"\varphi_i"] \arrow[rd,swap,"\psi_i"] &
X \arrow[d,"\Phi"] \\
& Y
\end{tikzcd}\]
commutes for every \(i\in I\).
\end{itemize}
The property stated in \({\rm DL}_2)\) is called the
\emph{universal property}.
An arbitrary direct system needs not admit a direct limit, but
whenever the direct limit exists, it is unique up to unique isomorphism. By abuse of notation, we sometimes refer to the object $ X $ as the direct limit of \(\big(\{X_i\}_{i\in I},\{\varphi_{ij}\}_{i\leq j}\big)\).
\smallskip

By an \emph{inverse system} in \(\mathscr C\) over \(I\) we mean
a couple \(\big(\{X_i\}_{i\in I},\{P_{ij}\}_{i\leq j}\big)\) --
where \(\{X_i\,:\,i\in I\}\) is a family of objects of \(\mathscr C\),
while \(P_{ij}\colon X_j\to X_i\) is a morphism for every \(i,j\in I\)
with \(i\leq j\) -- such that the following properties hold:
\begin{itemize}
\item[\({\rm IS}_1)\)] \(P_{ii}\) is the identity of \(X_i\) for every \(i\in I\).
\item[\({\rm IS}_2)\)] \(P_{ik}=P_{ij}\circ P_{jk}\) holds
for every \(i,j,k\in I\) with \(i\leq j\leq k\).
\end{itemize}
A couple \(\big(X,\{P_i\}_{i\in I}\big)\) --  where \(X\) is an
object of \(\mathscr C\) and \(P_i\colon X\to X_i\) is a morphism
for any \(i\in I\) -- is said to be the \emph{inverse limit} of
\(\big(\{X_i\}_{i\in I},\{P_{ij}\}_{i\leq j}\big)\) provided it holds that:
\begin{itemize}
\item[\({\rm IL}_1)\)] The diagram
\[\begin{tikzcd}
X \arrow[rd,"P_i"] \arrow[d,swap,"P_j"] & \\
X_j \arrow[r,swap,"P_{ij}"] & X_i
\end{tikzcd}\]
commutes for every \(i,j\in I\) with \(i\leq j\).
\item[\({\rm IL}_2)\)] Given any couple
\(\big(Y,\{Q_i\}_{i\in I}\big)\) satisfying the property in \({\rm IL}_1)\)
-- namely, \(Q_i=P_{ij}\circ Q_j\) for any \(i,j\in I\) with \(i\leq j\) --
there exists a unique morphism \(\Pi\colon Y\to X\) such that the diagram
\[\begin{tikzcd}
Y \arrow[r,"\Pi"] \arrow[rd,swap,"Q_i"] &
X \arrow[d,"P_i"] \\
& X_i
\end{tikzcd}\]
commutes for every \(i\in I\).
\end{itemize}
The property in \({\rm IL}_2)\) is referred to as
the \emph{universal property}. An inverse system
does not necessarily admit an inverse limit, but when the
inverse limit exists, it is unique up to unique isomorphism.
\medskip

Let us now spend a few words on direct and inverse limits in
the category of groups, which will play a central role in the
rest of this paper. We refer the reader to \cite{lang84} for
more on this topic.
\begin{theorem}[Direct/inverse limits of groups]
Direct limits and inverse limits always exist in the
category of groups.
\end{theorem}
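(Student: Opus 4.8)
The plan is to prove the two existence statements separately by explicit construction, exhibiting objects that visibly satisfy the universal properties.

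\smallskip

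\textbf{Direct limits of groups.} Given a direct system $\big(\{X_i\}_{i\in I},\{\varphi_{ij}\}_{i\leq j}\big)$ of groups, I would first form the disjoint union $\bigsqcup_{i\in I}X_i$ and introduce the equivalence relation $x\sim y$ (for $x\in X_i$, $y\in X_j$) if and only if $\varphi_{ik}(x)=\varphi_{jk}(y)$ for some $k\in I$ with $i\leq k$ and $j\leq k$; transitivity of $\sim$ uses the directedness of $I$ together with ${\rm DS}_2)$. Let $X$ be the quotient set and $\varphi_i\colon X_i\to X$ the composition of inclusion and projection. The group operation on $X$ is defined by choosing, for representatives $x\in X_i$ and $y\in X_j$, an upper bound $k\geq i,j$ and setting $[x]\cdot[y]\coloneqq[\varphi_{ik}(x)\varphi_{jk}(y)]$; one checks this is independent of the choice of $k$ and of representatives, again invoking directedness and ${\rm DS}_2)$. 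The identity is $[\,e_i\,]$ for any $i$ and inverses are $[x]^{-1}=[x^{-1}]$; associativity is routine. The maps $\varphi_i$ are then group homomorphisms and ${\rm DL}_1)$ holds by construction. For ${\rm DL}_2)$, given a target $\big(Y,\{\psi_i\}_{i\in I}\big)$ one is forced to set $\Phi([x])\coloneqq\psi_i(x)$ for $x\in X_i$; this is well-defined because $x\sim y$ implies $\psi_i(x)=\psi_k(\varphi_{ik}(x))=\psi_k(\varphi_{jk}(y))=\psi_j(y)$, it is a homomorphism, it satisfies $\Phi\circ\varphi_i=\psi_i$, and it is the unique such map since the images of the $\varphi_i$ cover $X$.

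\smallskip

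\textbf{Inverse limits of groups.} Given an inverse system $\big(\{X_i\}_{i\in I},\{P_{ij}\}_{i\leq j}\big)$, I would take $X$ to be the subset of the product $\prod_{i\in I}X_i$ consisting of all \emph{threads}, i.e., families $(x_i)_{i\in I}$ with $P_{ij}(x_j)=x_i$ whenever $i\leq j$. This is a subgroup of the product group (the thread condition is preserved by coordinatewise multiplication and inversion, and the identity thread lies in $X$), and $P_i\colon X\to X_i$ is the restriction of the $i$-th coordinate projection, which is a homomorphism; ${\rm IL}_1)$ is immediate. For ${\rm IL}_2)$, a compatible cone $\big(Y,\{Q_i\}_{i\in I}\big)$ forces $\Pi(y)\coloneqq\big(Q_i(y)\big)_{i\in I}$, which lands in $X$ precisely because $P_{ij}\circ Q_j=Q_i$, is a homomorphism, satisfies $P_i\circ\Pi=Q_i$, and is unique because the $P_i$ jointly separate points of $X$.

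\smallskip

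Neither half presents a genuine obstacle — these are the standard colimit/limit constructions in $\mathbf{Grp}$, and the category of groups is complete and cocomplete. The only points requiring a little care, and the places I would write out explicitly, are (i) the well-definedness of the group operation on the direct limit, where one must check independence both of the chosen upper bound $k$ and of the representatives, leaning on ${\rm DS}_2)$ and the directedness of $I$, and (ii) the verification that the forced maps $\Phi$ and $\Pi$ are well-defined homomorphisms rather than merely maps of sets. Since the excerpt cites \cite{lang84} for this material, I would keep the exposition brief, presenting the two constructions and stating that the routine verifications of the group axioms and the universal properties are left to the reader.
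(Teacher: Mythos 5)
Your proposal is correct and follows essentially the same route as the paper, which states the theorem with a reference to \cite{lang84} and then recalls exactly these two constructions (the quotient of the disjoint union in \eqref{eq:def_DL_gps} and the subgroup of threads in \eqref{eq:def_IL_groups}), leaving the routine verifications to the reader. Your added care about well-definedness of the operation and of the induced morphisms $\Phi$ and $\Pi$ is exactly what the paper's ``can be readily checked'' glosses over.
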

For the usefulness of the reader, we also recall the explicit
description of direct and inverse limits of groups; a similar
construction works on many other algebraic structures, such as
rings, modules, or algebras. For the rest of this section,
let \((I,\leq)\) be a fixed directed set.
\medskip

Let \(\big(\{G_i\}_{i\in I},\{\varphi_{ij}\}_{i\leq j}\big)\)
be a direct system of groups. We define an equivalence
relation \(\sim\) on the set \(\bigsqcup_{i\in I}G_i\):
given any \(x\in G_i\) and \(y\in G_j\), we declare that
\(x\sim y\) provided there exists \(k\in I\) with \(i,j\leq k\)
such that \(\varphi_{ik}(x)=\varphi_{jk}(y)\). The equivalence
class of \(x\) is denoted by \([x]_\sim\).
Then we define the group \(G\) as
\begin{equation}\label{eq:def_DL_gps}
G\coloneqq\bigsqcup_{i\in I}G_i\Big/\sim,
\end{equation}
the group operation \(\cdot\colon G\times G\to G\) being
defined as follows: given \(x\in G_i\) and \(y\in G_j\), we set
\[
[x]_\sim\cdot[y]_\sim\coloneqq\big[\varphi_{ik}(x)\cdot\varphi_{jk}(y)
\big]_\sim\in G,\quad\text{ for some (thus any) }k\in I
\text{ with }i,j\leq k.
\]
It can be readily checked that this operation is well-defined
(i.e., it does not depend on the specific choice of the representatives
\(x,y\) and of \(k\)) and that the resulting structure \((G,\cdot)\)
is a group. Moreover, for any \(i\in I\) we define the map
\(\varphi_i\colon G_i\to G\) as \(\varphi_i(x)\coloneqq[x]_\sim\)
for all \(x\in G_i\). Then each map \(\varphi_i\) is a group homomorphism
and \(\big(G,\{\varphi_i\}_{i\in I}\big)\) is the direct
limit of \(\big(\{G_i\}_{i\in I},\{\varphi_{ij}\}_{i\leq j}\big)\) in
the category of groups. It is worth to point out that for any \(x\in G\)
there exist \(i\in I\) and \(x_i\in G_i\) such that \(x=\varphi_i(x_i)\).
\medskip

Now let \(\big(\{G_i\}_{i\in I},\{P_{ij}\}_{i\leq j}\big)\) be an inverse
system of groups. Then we define the group \(G\) as
\begin{equation}\label{eq:def_IL_groups}
G\coloneqq\Big\{x=(x_i)_{i\in I}\in\prod\nolimits_{i\in I}G_i\;\Big|
\;x_i=P_{ij}(x_j)\text{ for every }i,j\in I\text{ with }i\leq j\Big\},
\end{equation}
which is a subgroup of the direct product \(\prod_{i\in I}G_i\) (that
has a natural group structure with respect to the elementwise operation).
Moreover, for any \(i\in I\) we define \(P_i\colon G\to G_i\) as
\(P_i(x)\coloneqq x_i\) for every \(x=(x_i)_{i\in I}\in G\).
It holds that each map \(P_i\) is a group homomorphism and
\(\big(G,\{P_i\}_{i\in I}\big)\) is the inverse limit of
\(\big(\{G_i\}_{i\in I},\{P_{ij}\}_{i\leq j}\big)\) in the
category of groups.
\subsection{Scalable groups}
\label{ss:scalable_gps}
Let \(G\) be a group. Given any element \(x\in G\), we denote
by \(L_x\colon G\to G\) and \(R_x\colon G \to G\) the
\emph{left translation map} and the \emph{right translation map} at \(x\),
respectively. Namely,
\[\begin{split}
L_x(y)\coloneqq xy&\quad\text{ for every }y\in G,\\
R_x(y)\coloneqq yx&\quad\text{ for every }y\in G.
\end{split}\]
Moreover, we shall denote by \({\sf Op}\colon G\times G\to G\)
the \emph{group multiplication map} \((x,y)\mapsto xy\) and by
\({\sf Inv}\colon G\to G\) the \emph{inversion map} \(x\mapsto x^{-1}\).
We now define the notions of scalable group and topological scalable group. We stress that the definition of scalable group introduced in \cite{LDLM19} coincides with our definition of topological scalable group.
\begin{definition}[Scalable group]\label{def:scalable_gr}
Let \(G\) be a group. Then we say that
\(\{\delta_\lambda\}_{\lambda\in\R}\subseteq{\rm End}(G)\)
is a \emph{family of dilations} on \(G\) provided the
following properties hold:
\begin{itemize}
\item[\(\rm i)\)] \(\delta_0\) is given by
\(\delta_0(x)\coloneqq e\) for every \(x\in G\), where \(e\) stands
for the identity element of \(G\).
\item[\(\rm ii)\)]
\(\delta_\lambda\circ\delta_\mu=\delta_{\lambda\mu}\) for every
\(\lambda,\mu\in\R\).
\item[$ \rm iii) $] \(\delta_\lambda\in{\rm Aut}(G)\) for every \(\lambda\in\R\setminus\{0\}\).
\end{itemize}
We denote by \(\delta\colon\R\times G\to G\) the map
\((\lambda,x)\mapsto\delta_\lambda(x)\). The couple \((G,\delta)\) is called
a \emph{scalable group}. If $ G $ is a topological group and the dilation \(\delta\) is continuous, then \((G,\delta)\) is called a \emph{topological scalable group}.
\end{definition}
Subgroups and quotients of scalable groups are defined in the natural way. A map
\(\varphi\colon G\to G'\) between scalable groups is a \emph{morphism of scalable groups} provided it is a group homomorphism that satisfies
\[
\varphi\big(\delta_\lambda(x)\big)=\delta'_\lambda\big(\varphi(x)\big)
\quad\text{ for every }\lambda\in\R\text{ and }x\in G.
\]
With this notion of morphism at disposal, we can speak about the category of scalable groups. 
\begin{theorem}[Direct limits of scalable groups]\label{thm:DL_scalable_gps}
	Let $\big(\{G_i\}_{i\in I},\{\varphi_{ij}\}_{i\leq j}\big)$ be a direct system of scalable groups. Then the direct limit exists, and it is given by the direct limit \eqref{eq:def_DL_gps} in the category of groups
	together with a suitable dilation map.
\end{theorem}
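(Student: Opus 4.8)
The plan is to take the group-theoretic direct limit $\big(G,\{\varphi_i\}_{i\in I}\big)$ from \eqref{eq:def_DL_gps} and equip $G$ with a family of dilations $\{\delta_\lambda\}_{\lambda\in\R}$ defined by transporting the dilations on the $G_i$ along the canonical maps $\varphi_i$. Concretely, for $\lambda\in\R$ and $x\in G$, I would write $x=\varphi_i(x_i)$ for some $i\in I$ and $x_i\in G_i$ (this is possible by the explicit description of the group direct limit recalled above), and set
\[
\delta_\lambda(x)\coloneqq\varphi_i\big(\delta^i_\lambda(x_i)\big),
\]
where $\delta^i$ denotes the dilation on $G_i$. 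The first task is to check this is \emph{well-defined}: if $\varphi_i(x_i)=\varphi_j(x_j)$, then by definition of $\sim$ there is $k\geq i,j$ with $\varphi_{ik}(x_i)=\varphi_{jk}(x_j)$; applying the morphism property $\varphi_{ik}\circ\delta^i_\lambda=\delta^k_\lambda\circ\varphi_{ik}$ (and similarly for $j$), one gets $\varphi_{ik}(\delta^i_\lambda(x_i))=\varphi_{jk}(\delta^j_\lambda(x_j))$, hence $\varphi_i(\delta^i_\lambda(x_i))=\varphi_j(\delta^j_\lambda(x_j))$. So the definition does not depend on the chosen representative.

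Next I would verify that $\{\delta_\lambda\}_{\lambda\in\R}$ is a family of dilations on $G$ in the sense of Definition \ref{def:scalable_gr}. That $\delta_\lambda$ is an endomorphism of $G$ follows by picking, for any two elements $x=\varphi_i(x_i)$, $y=\varphi_j(y_j)$, a common index $k\geq i,j$ and reducing everything to $G_k$, where $\delta^k_\lambda$ is already an endomorphism; the group operation on $G$ is defined exactly so that this reduction is compatible. Property i) is immediate since $\delta^i_0(x_i)=e_i$ and $\varphi_i(e_i)=e$; property ii) reduces, for a fixed representative, to $\delta^i_\lambda\circ\delta^i_\mu=\delta^i_{\lambda\mu}$ on $G_i$; and property iii), namely $\delta_\lambda\in{\rm Aut}(G)$ for $\lambda\neq0$, follows because $\delta_{1/\lambda}$ is a two-sided inverse of $\delta_\lambda$ by property ii) together with $\delta_1=\mathrm{id}_G$ (which itself follows from ii) and i), or directly from $\delta^i_1=\mathrm{id}_{G_i}$). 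Finally, each $\varphi_i\colon G_i\to G$ is a morphism of scalable groups, since by construction $\varphi_i\circ\delta^i_\lambda=\delta_\lambda\circ\varphi_i$; combined with the fact that the $\varphi_i$ are group homomorphisms forming a target, this gives half of what is needed.

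It then remains to prove the universal property in the category of scalable groups. Given any target $\big(H,\{\psi_i\}_{i\in I}\big)$ where $H$ is a scalable group and each $\psi_i\colon G_i\to H$ is a morphism of scalable groups compatible with the $\varphi_{ij}$, the universal property in the category of \emph{groups} already furnishes a unique group homomorphism $\Phi\colon G\to H$ with $\Phi\circ\varphi_i=\psi_i$ for all $i$. Uniqueness among morphisms of scalable groups is automatic (fewer candidates), so the only thing to check is that this $\Phi$ is itself a morphism of scalable groups, i.e.\ $\Phi\circ\delta_\lambda=\delta^H_\lambda\circ\Phi$. For $x=\varphi_i(x_i)$ one computes $\Phi(\delta_\lambda(x))=\Phi(\varphi_i(\delta^i_\lambda(x_i)))=\psi_i(\delta^i_\lambda(x_i))=\delta^H_\lambda(\psi_i(x_i))=\delta^H_\lambda(\Phi(\varphi_i(x_i)))=\delta^H_\lambda(\Phi(x))$, using that $\psi_i$ intertwines dilations. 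Since every $x\in G$ arises as such a $\varphi_i(x_i)$, this proves $\Phi$ is a morphism of scalable groups, completing the argument.

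I do not expect any serious obstacle here: the whole proof is a bookkeeping exercise in ``reduce to a common index $k$.'' The only mildly delicate point is making sure that every claim about $\delta_\lambda$ — well-definedness, the endomorphism property, compatibility with the group law — is checked by choosing representatives and then passing to a common upper bound $k\in I$, exploiting that all the structure maps and dilations on the $G_i$ commute appropriately. If I had to single out the ``hard'' step, it is verifying that $\delta_\lambda$ respects the group operation of $G$, since this is the one place where one must simultaneously juggle two representatives and invoke the definition of multiplication on $G$; but even this is routine once the common-index reduction is set up.
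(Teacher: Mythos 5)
Your proposal is correct and follows essentially the same route as the paper: transport the dilations to the group-theoretic direct limit via representatives, check well-posedness and the dilation axioms by passing to a common index, and then deduce the universal property from the group-level one by verifying that the induced homomorphism intertwines dilations. The only difference is that you spell out the verifications the paper dismisses as "readily checked," and these details are all correct.
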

\begin{proof}
Let \(\big(\{G_i\}_{i\in I},\{\varphi_{ij}\}_{i\leq j}\big)\)
be a direct system of scalable groups. In particular, it is a
direct system in the category of groups, thus call
\(\big(G,\{\varphi_i\}_{i\in I}\big)\) its direct limit (as a group).
We define a family of dilations \(\{\delta_\lambda\}_{\lambda\in\R}\)
on \(G\) as follows: given any \(\lambda\in\R\) and \(x\in G\),
there exist \(i\in I\) and \(x_i\in G_i\) such that \(\varphi_i(x_i)=x\);
then we set \(\delta_\lambda(x)\coloneqq\delta^i_\lambda(x_i)\),
where \(\delta^i\) stands for the dilation on \(G_i\). It can be readily
checked that \(\delta_\lambda\) is well-defined, that \((G,\delta)\) is
a scalable group, and that each map \(\varphi_i\) is a morphism of
scalable groups. To conclude, it only remains to prove the universal property:
let \(\big(H,\{\psi_i\}_{i\in I}\big)\) be any target of
\(\big(\{G_i\}_{i\in I},\{\varphi_{ij}\}_{i\leq j}\big)\) in the
category of scalable groups. In particular, it is a target in the category
of groups, thus there exists a unique group homomorphism
\(\Phi\colon G\to H\) such that \(\Phi\circ\varphi_i=\psi_i\)
for all \(i\in I\). Let us show that \(\Phi\) preserves the
dilation: if \(\lambda\in\R\) and \(x\in G\), then there exist
\(i\in I\) and \(x_i\in G_i\) such that \(\varphi_i(x_i)=x\), whence
\[
\Phi\big(\delta_\lambda(x)\big)=
(\Phi\circ\varphi_i)\big(\delta^i_\lambda(x_i)\big)=
\psi\big(\delta^i_\lambda(x)\big)=\delta'_\lambda\big(\psi_i(x_i)\big)
=\delta'_\lambda\big(\Phi(x)\big),
\]
where \(\delta'\) stands for the dilation on \(H\). This proves the
universal property, so that \(\big(G,\{\varphi_i\}_{i\in I}\big)\) is
the direct limit of \(\big(\{G_i\}_{i\in I},\{\varphi_{ij}\}_{i\leq j}\big)\)
in the category of scalable groups, as required.
\end{proof}

For the sake of completeness, we also give a proof of existence of inverse limits in the category of scalable groups.
\begin{theorem}[Inverse limits of scalable groups]\label{thm:IL_scalable_gps}
	Let \(\big(\{G_i\}_{i\in I},\{P_{ij}\}_{i\leq j}\big)\) be an inverse system
	of scalable groups. Then the inverse limit exists, and it is given by the inverse limit \eqref{eq:def_IL_groups} in the category of groups
	together with a suitable dilation map.
\end{theorem}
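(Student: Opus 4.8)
The plan is to mirror the proof of Theorem~\ref{thm:DL_scalable_gps} (direct limits of scalable groups) in the dual setting, exploiting the fact that the inverse limit of scalable groups will carry, essentially for free, a dilation map inherited coordinatewise. First I would let $\big(\{G_i\}_{i\in I},\{P_{ij}\}_{i\leq j}\big)$ be an inverse system of scalable groups; in particular it is an inverse system in the category of groups, so I may consider its inverse limit $\big(G,\{P_i\}_{i\in I}\big)$ as constructed in \eqref{eq:def_IL_groups}, namely the subgroup of $\prod_{i\in I}G_i$ consisting of all compatible threads $x=(x_i)_{i\in I}$ with $x_i=P_{ij}(x_j)$ for $i\leq j$. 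On this $G$ I would define the dilation $\delta_\lambda\colon G\to G$ for $\lambda\in\R$ coordinatewise, $\delta_\lambda(x)\coloneqq\big(\delta^i_\lambda(x_i)\big)_{i\in I}$, where $\delta^i$ is the dilation on $G_i$.

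The key steps are then the following verifications, each of which reduces to a coordinatewise check using that the $P_{ij}$ are morphisms of scalable groups. First, $\delta_\lambda(x)$ actually lies in $G$: the compatibility condition $\delta^i_\lambda(x_i)=P_{ij}\big(\delta^j_\lambda(x_j)\big)$ follows from $P_{ij}\circ\delta^j_\lambda=\delta^i_\lambda\circ P_{ij}$ applied to $x_j$, together with $x_i=P_{ij}(x_j)$. Second, $\{\delta_\lambda\}_{\lambda\in\R}$ is a family of dilations on $G$ in the sense of Definition~\ref{def:scalable_gr}: properties i)--iii) all hold because they hold in each $G_i$ and the elementwise group operations on $\prod_{i\in I}G_i$ restrict to $G$; in particular $\delta_\lambda$ is an endomorphism of $G$, $\delta_0$ sends everything to the identity thread, $\delta_\lambda\circ\delta_\mu=\delta_{\lambda\mu}$, and for $\lambda\neq0$ the coordinatewise inverse $\big(\delta^i_{1/\lambda}(\,\cdot\,)\big)_i$ shows $\delta_\lambda\in{\rm Aut}(G)$. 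Third, each projection $P_i\colon G\to G_i$ is a morphism of scalable groups, since $P_i\big(\delta_\lambda(x)\big)=\delta^i_\lambda(x_i)=\delta^i_\lambda\big(P_i(x)\big)$ directly from the definition.

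Finally I would establish the universal property in the category of scalable groups. Given any couple $\big(H,\{Q_i\}_{i\in I}\big)$ with $H$ a scalable group and $Q_i\colon H\to G_i$ morphisms of scalable groups satisfying $Q_i=P_{ij}\circ Q_j$ for $i\leq j$, forget the dilations: $\big(H,\{Q_i\}_{i\in I}\big)$ is then a cone over the inverse system in the category of groups, so there is a unique group homomorphism $\Pi\colon H\to G$ with $P_i\circ\Pi=Q_i$ for all $i$; concretely $\Pi(h)=\big(Q_i(h)\big)_{i\in I}$. It remains to check $\Pi\big(\delta^H_\lambda(h)\big)=\delta_\lambda\big(\Pi(h)\big)$, where $\delta^H$ is the dilation on $H$; comparing coordinates, the $i$-th coordinate of the left side is $Q_i\big(\delta^H_\lambda(h)\big)=\delta^i_\lambda\big(Q_i(h)\big)$ since $Q_i$ is a morphism of scalable groups, which is exactly the $i$-th coordinate of the right side. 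Uniqueness of $\Pi$ as a morphism of scalable groups follows a fortiori from its uniqueness as a group homomorphism. I do not anticipate a genuine obstacle here: unlike the metric setting discussed later in the paper, no continuity of right translations, inversion, or dilation is at stake because scalable groups are purely algebraic; the only mild point of care is confirming that $\delta_\lambda$ preserves the thread-compatibility constraint defining $G$, which is precisely where the hypothesis that the $P_{ij}$ commute with dilations is used.
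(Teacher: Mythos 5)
Your proposal is correct and follows essentially the same route as the paper's proof: take the inverse limit \eqref{eq:def_IL_groups} in the category of groups, define the dilation coordinatewise (equivalently, via $P_i\big(\delta_\lambda(x)\big)=\delta^i_\lambda\big(P_i(x)\big)$), and obtain the universal property from the group-level one by checking that the induced homomorphism commutes with dilations in each coordinate. Your explicit verification that $\delta_\lambda$ preserves the thread-compatibility condition is a detail the paper leaves implicit, but it is the same argument.
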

\begin{proof}
	Let \(\big(\{G_i\}_{i\in I},\{P_{ij}\}_{i\leq j}\big)\) be an inverse system
	of scalable groups. In particular, it is an inverse system in the category of
	groups, whose inverse limit we denote by \(\big(G,\{P_i\}_{i\in I}\big)\).
	We define the family of dilations \(\{\delta_\lambda\}_{\lambda\in\R}\) on \(G\)
	in the following way: given \(\lambda\in\R\) and \(x\in G\), we define
	\(\delta_\lambda(x)\) as the unique element of \(G\) such that
	\(P_i\big(\delta_\lambda(x)\big)=\delta^i_\lambda\big(P_i(x)\big)\)
	for some (thus any) \(i\in I\), where by \(\delta^i\) we mean the dilation on \(G_i\).
	It turns out that \((G,\delta)\) is a scalable group and each map \(P_i\) is
	a scalable group morphism. Let us now prove the universal property: fix a
	scalable group \(H\) and a family \(\{Q_i\}_{i\in I}\) of scalable group
	morphisms \(Q_i\colon H\to G_i\) such that \(Q_i=P_{ij}\circ Q_j\) for every
	\(i,j\in I\) with \(i\leq j\). Since each map \(Q_i\) is in particular a
	group homomorphism, there exists a unique group homomorphism
	\(\Phi\colon H\to G\) such that \(Q_i=P_i\circ\Phi\) for all \(i\in I\).
	Call \(\delta'\) the dilation on \(H\). If \(\lambda\in\R\) and \(y\in H\), then
	\[
	P_i\big(\delta_\lambda(\Phi(y))\big)=
	\delta^i_\lambda\big(P_i(\Phi(y))\big)=
	\delta^i_\lambda\big(Q_i(y)\big)=
	Q_i\big(\delta'_\lambda(y)\big)\quad\text{ for every }i\in I,
	\]
	thus accordingly \(\delta_\lambda\big(\Phi(y)\big)=\Phi\big(\delta'_\lambda(y)\big)\).
	This shows that \(\Phi\) is a morphism of scalable groups, whence the universal
	property is proven. Therefore, we have that \(\big(G,\{P_i\}_{i\in I}\big)\) is
	the inverse limit of \(\big(\{G_i\}_{i\in I},\{P_{ij}\}_{i\leq j}\big)\)
	in the category of scalable groups, as required.
\end{proof}
\section{Direct limits of (complete) metric scalable groups}
\label{s:DL_of_CMSG}
\subsection{Scalable groups with distances}
\label{ss:sc_gps_dist}
As a first step towards constructing direct limits of complete metric scalable groups, we study scalable groups equipped with a suitable pseudometric.
\begin{definition}[Compatible distance]\label{def:compatible_dist}
Let \((G,\delta)\) be a scalable group. Let \(\sfd\) be a pseudodistance on \(G\).
Then we say that \(\sfd\) is \emph{compatible} with \((G,\delta)\), if $ \sfd $ is left-invariant and
\[
\sfd\big(\delta_\lambda(x),\delta_\lambda(y)\big)=|\lambda|\,\sfd(x,y)
\quad\text{ for every }\lambda\in\R\text{ and }x,y\in G.
\]
\end{definition}
\noindent
Recall that a pseudodistance \(\sfd\) on a group \(G\) is said to be
\emph{left-invariant} provided it satisfies
\[
\sfd(xy,xz)=\sfd(y,z)\quad\text{ for every }x,y,z\in G.
\]
In the following lemma we collect some technical continuity results on scalable groups with compatible pseudodistances. These results will be needed in the construction of direct limits of (complete) metric scalable groups. Recall Definition \ref{def:modulus_cont} for the modulus of continuity.
\begin{lemma}
\label{lem:cont_operations}
Let \((G,\delta)\) be a scalable group endowed with a compatible
pseudodistance \(\sfd\). Then
for every $ x,y\in G $ and $ \eps >0 $, it holds that
\begin{enumerate}[label=\rm\alph*)]
	\item \label{it:Rx_unif_cont} \(\omega_{R_x}(e;\eps)=\omega_{R_x}(y;\eps)\)\,;
	\item \label{it:inv_vs_Rx} 
	\(\omega_{\sf Inv}(x;\eps)=\omega_{R_{x^{-1}}}(e;\eps)\)\,;
	\item \label{it:Rx_extension} $ \inf_{y\in B(x,\eps/3)} \omega_{R_y}(e;\eps) \geq \omega_{R_x}(e;\eps/3) $\,.
\end{enumerate}
Moreover,
\begin{enumerate}[label=\rm\alph*)]
	\setcounter{enumi}{3}
	\item \label{it:Op_Cauchy} if \(R_x\) is continuous at \(e\) for
	any \(x\in G\), then \({\sf Op}\colon G\times G\to G\) is
	Cauchy-continuous;
	\item \label{it:delta_Cauchy} if \(\delta(\cdot,x)\) is
	continuous for any \(x\in G\), then \(\delta\colon\R\times G\to G\)
	is Cauchy-continuous.
\end{enumerate}
\end{lemma}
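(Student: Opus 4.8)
The plan is to handle the five items roughly in order. Items \ref{it:Rx_unif_cont}--\ref{it:Rx_extension} are essentially algebraic and follow from left-invariance of $\sfd$ (plus a short auxiliary identity for \ref{it:inv_vs_Rx}), while \ref{it:Op_Cauchy} and \ref{it:delta_Cauchy} will be deduced by reducing, via Lemma \ref{lemma:Cauchy_continuity_vs_moduli}, to uniform continuity on totally bounded sets and then running finite-covering arguments.

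For \ref{it:Rx_unif_cont}: left translation by $y^{-1}$ is a bijection of $G$ carrying the ball $\{y'\in G:\sfd(y,y')<\eta\}$ onto $\{z\in G:\sfd(e,z)<\eta\}$, because $\sfd(y,y')=\sfd(e,y^{-1}y')$ by left-invariance, and moreover $\sfd\big(R_x(y),R_x(y')\big)=\sfd(yx,y'x)=\sfd\big(x,(y^{-1}y')x\big)=\sfd\big(R_x(e),R_x(y^{-1}y')\big)$; hence the two sets whose suprema define $\omega_{R_x}(y;\eps)$ and $\omega_{R_x}(e;\eps)$ coincide. For \ref{it:inv_vs_Rx}: the key remark is that $\sfd(e,z)=\sfd(e,z^{-1})$ for every $z\in G$, which follows from $\sfd(e,z)=\sfd(z^{-1}e,z^{-1}z)=\sfd(z^{-1},e)=\sfd(e,z^{-1})$ using left-invariance and symmetry of $\sfd$; substituting $y=xz^{-1}$ in the definition of $\omega_{\sf Inv}(x;\eps)$ then turns the hypothesis $\sfd(x,y)<\eta$ into $\sfd(e,z^{-1})<\eta$, that is $\sfd(e,z)<\eta$, and the conclusion $\sfd(x^{-1},y^{-1})<\eps$ into $\sfd\big(x^{-1},zx^{-1}\big)=\sfd\big(R_{x^{-1}}(e),R_{x^{-1}}(z)\big)<\eps$, which matches the definition of $\omega_{R_{x^{-1}}}(e;\eps)$.

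For \ref{it:Rx_extension}: given $y$ with $\sfd(x,y)<\eps/3$ and $z$ with $\sfd(e,z)<\omega_{R_x}(e;\eps/3)$, the triangle inequality and left-invariance give
\[
\sfd\big(R_y(e),R_y(z)\big)=\sfd(y,zy)\le\sfd(y,x)+\sfd(x,zx)+\sfd(zx,zy)<\eps/3+\eps/3+\eps/3=\eps,
\]
so $\omega_{R_y}(e;\eps)\ge\omega_{R_x}(e;\eps/3)$, and taking the infimum over $y\in B(x,\eps/3)$ proves \ref{it:Rx_extension}. For \ref{it:Op_Cauchy}, by Lemma \ref{lemma:Cauchy_continuity_vs_moduli} it suffices to prove that ${\sf Op}$ is uniformly continuous on every totally bounded $T\sus G\times G$; let $A,B$ be the (totally bounded) coordinate projections of $T$, so $T\sus A\times B$. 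For $(x,y),(x',y')\in T$, left-invariance yields
\[
\sfd(xy,x'y')\le\sfd(xy,xy')+\sfd(xy',x'y')=\sfd(y,y')+\sfd\big(R_{y'}(e),R_{y'}(x'^{-1}x)\big).
\]
Covering $B$ by finitely many balls $B(b_1,\eps/6),\dots,B(b_k,\eps/6)$, item \ref{it:Rx_extension} gives $\omega_{R_{y'}}(e;\eps/2)\ge\eta:=\min_{1\le i\le k}\omega_{R_{b_i}}(e;\eps/6)$ for every $y'\in B$, with $\eta>0$ because each $R_{b_i}$ is continuous at $e$ by hypothesis; hence $\sfd(x,x')<\eta$ (equivalently $\sfd(e,x'^{-1}x)<\eta$) together with $\sfd(y,y')<\eps/2$ forces $\sfd(xy,x'y')<\eps$, which is the desired uniform continuity on $T$.

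For \ref{it:delta_Cauchy}, by Lemma \ref{lemma:Cauchy_continuity_vs_moduli} it again suffices to prove uniform continuity of $\delta$ on every totally bounded $T\sus\R\times G$; its projection to $\R$ is contained in some $[-M,M]$ (the case $M=0$ being trivial, since then $\delta$ is constantly $e$ on $T$), and its projection $B$ to $G$ is totally bounded. For $(\lambda,x),(\mu,y)\in T$, compatibility of $\sfd$ gives
\[
\sfd\big(\delta_\lambda(x),\delta_\mu(y)\big)\le\sfd\big(\delta_\lambda(x),\delta_\lambda(y)\big)+\sfd\big(\delta_\lambda(y),\delta_\mu(y)\big)=|\lambda|\,\sfd(x,y)+\sfd\big(\delta_\lambda(y),\delta_\mu(y)\big),
\]
whose first summand is $\le M\,\sfd(x,y)$; covering $B$ by finitely many balls $B(b_i,\rho)$ and using compatibility once more,
\[
\sfd\big(\delta_\lambda(y),\delta_\mu(y)\big)\le|\lambda|\,\sfd(y,b_i)+\sfd\big(\delta_\lambda(b_i),\delta_\mu(b_i)\big)+|\mu|\,\sfd(b_i,y)\le 2M\rho+\sfd\big(\delta_\lambda(b_i),\delta_\mu(b_i)\big)
\]
for $y\in B(b_i,\rho)$. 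Since each $\delta(\cdot,b_i)$ is continuous, hence uniformly continuous on the compact interval $[-M,M]$, one can pick $\rho$ small and a modulus common to $i=1,\dots,k$ so that both summands become small as soon as $\sfd(x,y)$ and $|\lambda-\mu|$ are small, yielding uniform continuity of $\delta$ on $T$. The crux in \ref{it:Op_Cauchy} and \ref{it:delta_Cauchy} is exactly this passage from \emph{pointwise} continuity of the right translations (resp.\ of the partial dilations $\delta(\cdot,x)$) to a \emph{uniform} bound over an entire totally bounded set of base points — in infinite dimensions pointwise continuity is far too weak on its own; Lemma \ref{lemma:Cauchy_continuity_vs_moduli} lets us reduce to totally bounded sets coverable by finitely many balls, and then the moduli of continuity are transferred from the finitely many centers to nearby points, this transfer being precisely item \ref{it:Rx_extension} for \ref{it:Op_Cauchy} and being supplied by the $|\lambda|$-homogeneity of $\sfd$ together with compactness of $[-M,M]$ for \ref{it:delta_Cauchy}.
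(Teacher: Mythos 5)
Your proof is correct and follows essentially the same route as the paper: items \ref{it:Rx_unif_cont}--\ref{it:Rx_extension} come from left-invariance (your bijection phrasing of \ref{it:Rx_unif_cont} and \ref{it:inv_vs_Rx} compactly repackages the paper's two-sided estimates, and \ref{it:Rx_extension} is verbatim the paper's computation), while \ref{it:Op_Cauchy} and \ref{it:delta_Cauchy} are obtained, exactly as in the paper, by reducing through Lemma \ref{lemma:Cauchy_continuity_vs_moduli} to uniform continuity on totally bounded sets and transferring moduli from finitely many ball centres via \ref{it:Rx_extension} (in \ref{it:delta_Cauchy} you invoke Heine--Cantor on \([-M,M]\) where the paper uses an \(\eps\)-net in \(T_\R\), a cosmetic difference). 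The only pedantic remark is that you establish uniform continuity of the restrictions to \(T\) (both points taken in \(T\)) rather than the modulus-based formulation in item \ref{it:unif_cont} of Lemma \ref{lemma:Cauchy_continuity_vs_moduli}, but this still yields Cauchy-continuity because every Cauchy sequence is a totally bounded set, so there is no gap.
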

\begin{proof}
\ \\
\ref{it:Rx_unif_cont} Fix any \(x,y,y'\in G\) and \(\eps>0\).
Choose any \(\eta<\omega_{R_x}(y';\eps)\). Then for every
\(z\in B_\eta(y)\) we have \(\sfd(y'y^{-1}z,y')=\sfd(z,y)<\eta\),
thus \(\sfd(zx,yx)=\sfd(y'y^{-1}zx,y'x)<\eps\) and accordingly
\(\omega_{R_x}(y;\eps)\geq\eta\). This yields
\(\omega_{R_x}(y;\eps)\geq\omega_{R_x}(y';\eps)\), whence \ref{it:Rx_unif_cont} follows
by the arbitrariness of \(x,y,y'\in G\) and \(\eps>0\).\\
\ref{it:inv_vs_Rx} Let \(x\in G\) and \(\eps>0\) be fixed. We first prove
that \(\omega_{\sf Inv}(x;\eps)\leq\omega_{R_{x^{-1}}}(e;\eps)\). Pick
\(\eta<\omega_{\sf Inv}(x;\eps)\). If \(y\in G\) satisfies
\(\sfd(y,e)<\eta\), then \(\sfd(xy^{-1},x)=\sfd(y^{-1},e)=\sfd(e,y)<\eta\)
and accordingly
\[
\sfd\big(R_{x^{-1}}(y),R_{x^{-1}}(e)\big)=\sfd(yx^{-1},x^{-1})
=\sfd\big((xy^{-1})^{-1},x^{-1}\big)=
\sfd\big({\sf Inv}(xy^{-1}),{\sf Inv}(x)\big)<\eps.
\]
Hence, we get \(\omega_{R_{x^{-1}}}(e;\eps)\geq\eta\).
By arbitrariness of \(\eta\), we conclude that
\(\omega_{\sf Inv}(x;\eps)\leq\omega_{R_{x^{-1}}}(e;\eps)\).

In order to prove the converse inequality, fix any
\(\eta<\omega_{R_{x^{-1}}}(e;\eps)\). If \(y\in G\) satisfies
\(\sfd(y,x)<\eta\), then \(\sfd(y^{-1}x,e)=\sfd(x,y)<\eta\). This implies that
\[
\sfd\big({\sf Inv}(y),{\sf Inv}(x)\big)=\sfd(y^{-1},x^{-1})
=\sfd(y^{-1}xx^{-1},x^{-1})=
\sfd\big(R_{x^{-1}}(y^{-1}x),R_{x^{-1}}(e)\big)<\eps,
\]
thus \(\omega_{\sf Inv}(x;\eps)\geq\eta\). By arbitrariness of \(\eta\),
we conclude that \(\omega_{R_{x^{-1}}}(e;\eps)\leq\omega_{\sf Inv}(x;\eps)\). \\
\ref{it:Rx_extension} Let us denote $ \eta \coloneqq \omega_{R_x}(e;\eps/3) $ and assume, without loss of generality,  that $ \eta > 0 $. It is then enough to observe that, given any $ z\in B_\eta(e) $ and $ y\in B_{\eps/3}(x) $, we have 
\[
\sfd\big(R_y(e),R_y(z)\big) =  \sfd(y,zy) \leq \sfd(y,x) + \sfd(x,zx) + \sfd(zx,zy) < \eps.
\]
\ref{it:Op_Cauchy} Suppose \(R_x\) is continuous at \(e\) for any \(x\in G\).
Let \(T\subseteq G\times G\) be a \(\sfd_{G\times G}\)-totally bounded set.
The projection \(T'\) of the set \(T\) on the second coordinate is
\(\sfd_G\)-totally bounded, as a consequence of the fact that the projection
map is Lipschitz. Now fix \(\eps>0\). Covering the set $ T' $ by a finite number of balls of radius $ \eps/3 $ we get by \ref{it:Rx_extension} and the continuity of $ R_y $ at every $ y\in G $ that
\[
\eta' \coloneqq \inf_{y\in T'}\omega_{R_y}(e;\eps) > 0.
\] 
Denote $ \eta \coloneqq \min\big\{\eps,\eta' \big\}
>0 $ and let us show that
\begin{equation}\label{eq:cont_operations_aux1}
\omega_{\sf Op}\big((x,y);2\eps\big)\geq \eta \quad\text{ for every }(x,y)\in T.
\end{equation}
To prove it, fix any \((x,y)\in T\). Given any
\((x',y')\in B_\eta(x,y)\), it holds that
\[
\sfd(xy,x'y')\leq \sfd(xy,x'y) + \sfd(x'y,x'y') < \eps + \eta \leq 2\eps.
\]
This shows \eqref{eq:cont_operations_aux1}. Thanks to
Lemma \ref{lemma:Cauchy_continuity_vs_moduli}, we thus conclude
that the group multiplication map \({\sf Op}\colon G\times G\to G\)
is Cauchy-continuous, as desired.\\
\ref{it:delta_Cauchy} Suppose \(\delta(\cdot,x)\) is continuous for any \(x\in G\).
Let \(T\subseteq\R\times G\) be totally bounded. Fix \(\eps>0\).
Denote by \(T_\R\subseteq\R\) and \(T_G\subseteq G\) the totally bounded projections of
\(T\) on the first and the second components, respectively. Call
\(M\coloneqq\sup_{\lambda\in T_\R}|\lambda|<+\infty\) and choose any
\(x_1,\ldots,x_n\in T_G\) for which
\(T_G\subseteq\bigcup_{i=1}^n B_{\eps/2M}(x_i)\). Let
\[
\eta(\lambda)\coloneqq
\frac{1}{2}\min\big\{\omega_{\delta(\cdot,x_1)}(\lambda;\eps),\ldots,
\omega_{\delta(\cdot,x_n)}(\lambda;\eps)\big\}>0
\quad\text{ for every }\lambda\in\R.
\]
Now pick \(\lambda_1,\ldots,\lambda_m\in T_\R\) such that
\(T_\R\subseteq\bigcup_{j=1}^m B_{\eta(\lambda_j)}(\lambda_j)\).
We claim that choosing
\[
\eta \coloneqq \min\big\{M,\eps/2M,\eta(\lambda_1),\ldots,\eta(\lambda_m)\big\}
\]
gives
\begin{equation}\label{eq:cont_operations_aux2}
\inf_{(\lambda,x)\in T}\omega_\delta\big((\lambda,x);5\eps\big)\geq \eta.
\end{equation}
To prove \eqref{eq:cont_operations_aux2}, fix \((\lambda,x)\in T\) and \((\lambda',x')\in\R\times G\)
such that \(\sfd_{\R\times G}\big((\lambda,x),(\lambda',x')\big)<\eta\).
Consider \(i\in \{1,\ldots,n\}\) and \(j\in \{1,\ldots,m\}\) for which
\(|\lambda-\lambda_j|<\eta(\lambda_j)\) and \(\sfd(x,x_i)<\eps/2M\).
Observe that
\[\begin{split}
\sfd\big(\delta(\lambda,x),\delta(\lambda',x')\big)\leq&\;
\sfd\big(\delta_\lambda(x),\delta_\lambda(x_i)\big)+
\sfd\big(\delta_\lambda(x_i),\delta_{\lambda_j}(x_i)\big)+
\sfd\big(\delta_{\lambda_j}(x_i),\delta_{\lambda'}(x_i)\big)\\
&+\sfd\big(\delta_{\lambda'}(x_i),\delta_{\lambda'}(x)\big)+
\sfd\big(\delta_{\lambda'}(x),\delta_{\lambda'}(x')\big).
\end{split}\]
Let us estimate the five terms appearing in the right-hand side of the
previous formula. First,
\[\begin{split}
\sfd\big(\delta_\lambda(x),\delta_\lambda(x_i)\big)&=
|\lambda|\,\sfd(x,x_i)\leq M\frac{\eps}{2M}<\eps,\\
\sfd\big(\delta_{\lambda'}(x_i),\delta_{\lambda'}(x)\big)&=
|\lambda'|\,\sfd(x_i,x)\leq\big(\eta+|\lambda|\big)\,\sfd(x_i,x)<
2M\frac{\eps}{2M}=\eps,\\
\sfd\big(\delta_{\lambda'}(x),\delta_{\lambda'}(x')\big)&=
|\lambda'|\,\sfd(x,x')\leq\big(\eta+|\lambda|\big)\eta
\leq 2M\frac{\eps}{2M}=\eps.
\end{split}\]
Moreover, since \(|\lambda-\lambda_j|<\eta(\lambda_j)
<\omega_{\delta(\cdot,x_i)}(\lambda_j;\eps)\), we have that
\(\sfd\big(\delta_\lambda(x_i),\delta_{\lambda_j}(x_i)\big)<\eps\).
Finally, since \(|\lambda_j-\lambda'|\leq|\lambda_j-\lambda|+\eta<
2\,\eta(\lambda_j)\leq\omega_{\delta(\cdot,x_i)}(\lambda_j;\eps)\), we see
that \(\sfd\big(\delta_{\lambda_j}(x_i),\delta_{\lambda'}(x_i)\big)<\eps\).
All in all, we have proven that
\(\sfd\big(\delta(\lambda,x),\delta(\lambda',x')\big)<5\eps\),
thus obtaining the claim \eqref{eq:cont_operations_aux2}. By taking
Lemma \ref{lemma:Cauchy_continuity_vs_moduli} into account, we can conclude
that \(\delta\colon\R\times G\to G\) is Cauchy-continuous, as desired.
\end{proof}
\begin{remark}{\rm
Observe that items \ref{it:Rx_unif_cont} -- \ref{it:Op_Cauchy} hold also without existence of the dilation \(\delta\).\fr}\end{remark}
\begin{remark}{\rm Note that, given a scalable group $ (G,\delta) $ equipped with a compatible distance $ \sfd $, the continuity of the map \(\delta(\cdot,x)\colon\R\to G\) for	\(x\in G\setminus\{e\}\) is not automatic even when $G$ is an abelian topological group and $ \sfd $ is geodesic. Indeed, let
\(f\colon\big(\R\setminus\{0\},\cdot\big)\to(\R,+)\) be a non-continuous
group homomorphism and consider the abelian group \(G =(\R^2,+)\). Let \(\mathcal L_\theta\) denote the rotation in \(\R^2\) by an angle \(\theta\in\R\). Define the
maps \(\delta_\lambda\colon\R^2\to\R^2\) as
\[
\delta_\lambda=(\lambda\,{\rm id}_{\R^2})\circ\mathcal L_{f(\lambda)}
\quad\text{ for every }\lambda\in\R\setminus\{0\},\quad \text{and }\delta_0 \equiv 0,
\]
where \({\rm id}_{\R^2}\) stands for the identity map.
Clearly, \(\lambda\mapsto\delta_\lambda(x)\) is not continuous
for any \(x\in\R^2\setminus\{0\}\), but we claim that \((\R^2,\delta)\)
is a scalable group and that \(\delta\) is a metric dilation with respect
to the Euclidean distance. It suffices to notice that \(\delta_\lambda\)
is a linear bijection for each \(\lambda\in\R\setminus\{0\}\) satisfying
\[
\delta_s\circ \delta_t=(s\,{\rm id}_{\R^2})\circ\mathcal L_{f(s)}\circ
(t\,{\rm id}_{\R^2})\circ\mathcal L_{f(t)}=(st\,{\rm id}_{\R^2})\circ
\mathcal L_{f(s)+f(t)}=(st\,{\rm id}_{\R^2})\circ\mathcal L_{f(st)}=
\delta_{st}
\]
and, since \(\mathcal L_\theta\) is an isometry,
\(\big|\delta_\lambda(x)-\delta_\lambda(y)\big|=\big|(\lambda\,{\rm id}_{\R^2})
\circ\mathcal L_{f(\lambda)}(x-y)\big|=|\lambda||x-y|\).
\fr}\end{remark}
The well-known fact that a linear map between normed spaces is continuous
(at the origin) if and only if it is Lipschitz, can be generalized to the
setting of scalable groups:
\begin{proposition}\label{prop:morph_cont_vs_Lip}
	Let \((G,\delta)\) and \((G',\delta')\) be scalable groups. Let
	\(\sfd\) (resp.\ \(\sfd'\)) be a compatible pseudodistance on \(G\)
	(resp.\ on \(G'\)). Let \(\varphi\colon G\to G'\) be a morphism
	of scalable groups. Then \(\varphi\) is continuous at \(e\in G\) if and
	only if it is Lipschitz.
\end{proposition}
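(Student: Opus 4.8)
The plan is to prove the two implications separately. The direction ``Lipschitz $\implies$ continuous at $e$'' is immediate, so the real content is the converse. Suppose $\varphi$ is continuous at $e\in G$; I want to produce a single constant $L\geq 0$ with $\sfd'\big(\varphi(x),\varphi(y)\big)\leq L\,\sfd(x,y)$ for all $x,y\in G$. The first reduction is to use left-invariance of both $\sfd$ and $\sfd'$ together with the fact that $\varphi$ is a group homomorphism: since $\sfd'\big(\varphi(x),\varphi(y)\big)=\sfd'\big(\varphi(x^{-1}y),e\big)$ and $\sfd(x,y)=\sfd(x^{-1}y,e)$, it suffices to show $\sfd'\big(\varphi(z),e\big)\leq L\,\sfd(z,e)$ for every $z\in G$. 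So the problem is reduced to controlling the ``norm'' $\sfd'(\varphi(z),e)$ in terms of $\sfd(z,e)$.

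Next I would extract a quantitative consequence of continuity at $e$: there exists $\eta>0$ such that $\sfd(z,e)<\eta$ implies $\sfd'\big(\varphi(z),\varphi(e)\big)=\sfd'\big(\varphi(z),e\big)<1$. The key step is then a scaling/dilation argument. Fix $z\in G$ with $z\neq e$ and set $r\coloneqq\sfd(z,e)>0$. Choose $\lambda\in\R$ with $|\lambda|$ small enough that $|\lambda|\,r<\eta$; by compatibility of $\sfd$ with $\delta$ we have $\sfd\big(\delta_\lambda(z),e\big)=\sfd\big(\delta_\lambda(z),\delta_\lambda(e)\big)=|\lambda|\,\sfd(z,e)=|\lambda|\,r<\eta$, so $\sfd'\big(\varphi(\delta_\lambda(z)),e\big)<1$. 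Since $\varphi$ is a morphism of scalable groups, $\varphi(\delta_\lambda(z))=\delta'_\lambda(\varphi(z))$, and by compatibility of $\sfd'$ with $\delta'$ we get $|\lambda|\,\sfd'\big(\varphi(z),e\big)=\sfd'\big(\delta'_\lambda(\varphi(z)),\delta'_\lambda(e)\big)=\sfd'\big(\varphi(\delta_\lambda(z)),e\big)<1$, hence $\sfd'\big(\varphi(z),e\big)<1/|\lambda|$. Optimizing the choice of $\lambda$ — taking $|\lambda|$ as close as possible to $\eta/r$ (or equal, with a harmless strict/non-strict adjustment, e.g. using $|\lambda|=\eta/(2r)$) — yields $\sfd'\big(\varphi(z),e\big)\leq \frac{2}{\eta}\,r=\frac{2}{\eta}\,\sfd(z,e)$. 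Thus $L=2/\eta$ works, and the case $z=e$ is trivial.

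The one point requiring a little care — and the mild ``obstacle'' — is that the dilations $\delta_\lambda$ are only defined for $\lambda\in\R$ and $\delta_0$ collapses everything to $e$, so one cannot literally take $\lambda=\eta/r$ if that forces issues; but since $\lambda$ ranges over all nonzero reals and $|\lambda|$ can be made to approach $\eta/r$ from below, the bound $L=2/\eta$ (or any $L>1/\eta$) is obtained cleanly, and no topological completeness or any property of $\sfd$ beyond compatibility and left-invariance is used. I do not expect any genuine difficulty here; the proof is a direct transcription of the classical normed-space argument, with ``scaling by a scalar'' replaced by ``applying a dilation'' and the homogeneity $\|\lambda x\|=|\lambda|\,\|x\|$ replaced by the compatibility identity.
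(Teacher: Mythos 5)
Your proof is correct and is essentially the paper's argument: the paper proves the same bound $\sfd'(\varphi(x),e')\leq C\,\sfd(x,e)$ by a contradiction/rescaling with dilations $\delta_{\lambda_n}$, which is just the contrapositive form of your direct scaling step, followed by the identical left-invariance reduction. (The only cosmetic point, present in the paper's proof as well, is the case of $z\neq e$ with $\sfd(z,e)=0$, allowed for a pseudodistance; there continuity at $e$ immediately gives $\sfd'(\varphi(z),e')=0$, so nothing is lost.)
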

\begin{proof}
	Sufficiency is obvious. To prove necessity, suppose \(\varphi\) is
	continuous at \(e\). Denote by \(e'\) the identity element of
	\(G'\). We claim that there exists a constant \(C>0\) such that
	\begin{equation}\label{eq:suff_cond_Lip_claim}
	\sfd'\big(\varphi(x),e'\big)\leq C\,\sfd(x,e)\quad\text{ for every }x\in G.
	\end{equation}
	In order to prove it, we argue by contradiction: suppose there exists a
	sequence \((x_n)_n\subseteq G\) such that
	\(\sfd'\big(\varphi(x_n),e'\big)>n\,\sfd(x_n,e)\) for all \(n\in\N\).
	Call \(\lambda_n\coloneqq 1/\big(n\,\sfd(x_n,e)\big)>0\)
	and \(y_n\coloneqq\delta_{\lambda_n}(x_n)\in G\) for every \(n\in\N\). Then
	\(\sfd(y_n,e)=\lambda_n\,\sfd(x_n,e)=1/n\to 0\) as \(n\to\infty\), while
	\[
	\sfd'\big(\varphi(y_n),e'\big)=
	\sfd'\big(\delta'_{\lambda_n}\big(\varphi(x_n)\big),e'\big)
	=\lambda_n\,\sfd'\big(\varphi(x_n),e'\big)>1\quad\text{ for every }n\in\N.
	\]
	Since \(\varphi(e)=e'\), this contradicts the continuity of \(\varphi\)
	at \(e\). Therefore, the claim \eqref{eq:suff_cond_Lip_claim} is proven.
	By exploiting \eqref{eq:suff_cond_Lip_claim} and the left-invariance
	of \(\sfd\) and \(\sfd'\), we finally conclude that
	\[
	\sfd'\big(\varphi(x),\varphi(y)\big)=\sfd'\big(\varphi(y^{-1}x),e'\big)
	\leq C\,\sfd(y^{-1}x,e)=C\,\sfd(x,y)\quad\text{ for every }x,y\in G,
	\]
	whence the map \(\varphi\) is Lipschitz, as required.
\end{proof}
\subsection{Metric scalable groups}
\label{ss:DL_MSG}
The following definition is introduced in \cite{LDLM19}. Recall the definition of topological scalable group (Defintion \ref{def:scalable_gr}) and of compatible distance (Definition \ref{def:compatible_dist}).
\begin{definition}[Metric scalable group]\label{def:metric_scalable_gp}
A \emph{metric scalable group}  (or briefly \emph{MSG}) is a triple \((G,\delta,\sfd)\),
where \((G,\delta)\) is a topological scalable group, while \(\sfd\) is a compatible distance on \((G,\delta)\) that induces the topology of \(G\).
A \emph{complete metric scalable group} is called \emph{CMSG}.
\end{definition}
Let \((G,\delta,\sfd)\), \((G',\delta',\sfd')\) be metric scalable groups.
Then a map \(\varphi\colon G\to G'\) is said to be a
\emph{morphism of metric scalable groups} provided it is a \(1\)-Lipschitz
morphism of scalable groups. With this notion at our disposal, we
can speak about the category of MSGs. We are now going to construct the metric limits of MSGs, as discussed in the Introduction.
\medskip

Given a direct system \(\big(\{G_i\}_{i\in I},\{\varphi_{ij}\}_{i\leq j}\big)\) of metric scalable groups, we denote by \(\delta^i\) and
\(\sfd_i\) the dilation and the distance on \(G_i\), respectively.
Since \(\big(\{G_i\}_{i\in I},\{\varphi_{ij}\}_{i\leq j}\big)\) is -- a fortiori -- a direct system also in the category of scalable groups, it admits a direct limit \(\big(G',\{\varphi'_i\}_{i\in I}\big)\)
in such category (recall Theorem \ref{thm:DL_scalable_gps}). We define the \emph{infimum-pseudodistance} $ \sfd' $ on $ G' $ in the following way:
\begin{equation}\label{eq:inf-pseudodist}
\sfd'(x,y)\coloneqq\inf\Big\{\sfd_i(x_i,y_i)\;\Big|\;
i\in I,\,x_i,y_i\in G_i,\,\varphi'_i(x_i)=x,\,\varphi'_i(y_i)=y\Big\}
\quad\text{ for every }x,y\in G'.
\end{equation}
One can easily check that $ \sfd' $ is compatible with the scalable group \((G',\delta')\). Setting
\begin{equation*}\label{eq:inf-pseudodist-zeroset}
Z(\sfd')\coloneqq\big\{x\in G'\;\big|\;\sfd'(x,e')=0\big\},
\end{equation*}
one may consider the induced quotient metric space $ \big(G'/Z(\sfd'),\sfd \big) $, where $ \sfd $ is defined as
\begin{equation*}\label{eq:def_d}
\sfd\big(xZ(\sfd'),yZ(\sfd')\big)\coloneqq\sfd'(x,y)
\quad\text{ for every }x,y\in G'.
\end{equation*}
Moreover, we may define the map
\(\delta\colon\R\times\big(G'/Z(\sfd')\big)\to G'/Z(\sfd')\) as
\begin{equation}\label{eq:def_delta}
	\delta\big(t,xZ(\sfd')\big)=\delta_t\big(xZ(\sfd')\big)\coloneqq
	\delta'_t(z)Z(\sfd')\quad\text{ for every }t\in\R\text{ and }x\in G'.
\end{equation}
In the proof of Lemma \ref{lem:metric_limit_is_msg} we show that $ Z(\sfd')$ is a scalable subgroup of $ G' $. Hence $ \delta $ is well-posed, i.e., it does not depend on the choice of representative.
\begin{definition}[Metric limit]
	\label{def:metric_limit} Let \(\big(\{G_i\}_{i\in I},\{\varphi_{ij}\}_{i\leq j}\big)\) be a direct system of metric scalable groups. Let $ (G',\delta') $ be its direct limit in the category of scalable groups and let $ \sfd' $ be the infimum-pseudodistance on $ G' $.
	 The triple \(\big(G'/Z(\sfd'),\delta,\sfd \big)\) is called the \emph{metric limit} of \(\big(\{G_i\}_{i\in I},\{\varphi_{ij}\}_{i\leq j}\big)\).
\end{definition}
We stress that, in general, the metric limit \(\big(G'/Z(\sfd'),\delta,\sfd \big)\) is not a metric scalable group. Indeed, the scalable subgroup $ Z(\sfd') $ might fail to be normal, so that \(G'/Z(\sfd')\) does not have a
natural group structure. Moreover, even when \(Z(\sfd')\) is normal, the group operations on $ G'/Z(\sfd') $ might fail to be continuous with respect to $ \sfd' $.
We now impose conditions on the direct system  \(\big(\{G_i\}_{i\in I},\{\varphi_{ij}\}_{i\leq j}\big)\) that will be necessary and sufficient for our constructions. In particular, the conditions will guarantee that the metric limit is a metric scalable group.
\begin{definition}[Non-degenerate direct system of MSGs]
\label{def:non-deg_DS_MSG}
Let \(\big(\{G_i\}_{i\in I},\{\varphi_{ij}\}_{i\leq j}\big)\) be a direct system of metric scalable groups. Equip its direct limit \(\big(G',\{\varphi'_i\}_{i\in I}\big)\) in the category of scalable groups with the infimum-pseudodistance $ \sfd' $. Then we say that \(\big(\{G_i\}_{i\in I},\{\varphi_{ij}\}_{i\leq j}\big)\) is \emph{non-degenerate} provided the
following conditions are satisfied:
\begin{enumerate}[label={\rm (C{\arabic*})}]
	\item \label{eq:delta_non-deg_lem}$ \delta'(\cdot, x) \colon \R \to G' $ is continuous for all $ x\in G' $,
	\item \label{eq:Rx_non-deg_lem} $ R_x\colon G' \to G' $ is continuous at $ e $ for all $ x\in G' $.
\end{enumerate}
\end{definition}
As we are going to show in the following result, the continuity conditions
\ref{eq:delta_non-deg_lem} and \ref{eq:Rx_non-deg_lem} can be directly checked along the approximating
net \(\{G_i\}_{i\in I}\).
\begin{proposition}\label{prop:char_non-def_DS}
	Let \(\big(\{G_i\}_{i\in I},\{\varphi_{ij}\}_{i\leq j}\big)\)
	be a direct system of metric scalable groups. Denote by
	\(\big(G',\{\varphi'_i\}_{i\in I}\big)\) the direct limit of
	\(\big(\{G_i\}_{i\in I},\{\varphi_{ij}\}_{i\leq j}\big)\)
	as scalable groups and by \(\sfd'\) the infimum-pseudodistance on \(G'\).
	Then the conditions \ref{eq:delta_non-deg_lem} and \ref{eq:Rx_non-deg_lem} can be equivalently formulated as:
\begin{enumerate}[label={\rm (C\arabic*')}]
	\item \label{eq:delta_non-deg} For all  $ t\in \R $ and $ x\in G' $,
	\[\inf_{\eta>0}\;\sup_{\substack{s\in\R:\\|s-t|<\eta}}\;
	\inf\Big\{\sfd_i\big(\delta^i(s,x_i),\delta^i(t,y_i)\big)
	\;\Big|\;x_i,y_i\in(\varphi'_i)^{-1}(x)\Big\}=0;\]
	\item \label{eq:Rx_non-deg} For all $ x\in G' $,
	\[\inf_{\eta>0}\;\sup_{\substack{y_i\in G_i:\\\sfd_i(e_i,y_i)<\eta}}\;	\inf\Big\{\sfd_j(x_j,y_jx_j)\;\Big|\;
	x_j\in(\varphi'_j)^{-1}(x),\,
	y_j\in(\varphi'_j)^{-1}\big(\varphi'_i(y_i)\big)\Big\}=0.\]
\end{enumerate}
\end{proposition}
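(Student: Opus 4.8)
The plan is to unwind both continuity conditions into the stated $\inf$--$\sup$--$\inf$ expressions by first rewriting pointwise continuity as the vanishing of an oscillation, and then replacing $\sfd'$ by its definition \eqref{eq:inf-pseudodist}. For the reformulation of continuity, recall from Definition \ref{def:modulus_cont} that a map $f$ between pseudometric spaces is continuous at a point $p$ if and only if $\inf_{\eta>0}\sup\{\sfd(f(p'),f(p))\mid\sfd(p',p)<\eta\}=0$. Applying this with $f=\delta'(\cdot,x)\colon\R\to G'$ at an arbitrary $t\in\R$ shows that \ref{eq:delta_non-deg_lem} is equivalent to $\inf_{\eta>0}\sup_{|s-t|<\eta}\sfd'\big(\delta'_s(x),\delta'_t(x)\big)=0$ for all $t\in\R$ and $x\in G'$; applying it with $f=R_x\colon G'\to G'$ at $e$ shows that \ref{eq:Rx_non-deg_lem} is equivalent to $\inf_{\eta>0}\sup\big\{\sfd'(yx,x)\mid y\in G',\ \sfd'(y,e)<\eta\big\}=0$ for all $x\in G'$.

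The heart of the matter is then to express the distances appearing above through the approximating groups, via the two identities
\[
\sfd'\big(\delta'_s(x),\delta'_t(x)\big)=\inf\Big\{\sfd_i\big(\delta^i(s,x_i),\delta^i(t,y_i)\big)\ \Big|\ i\in I,\ x_i,y_i\in(\varphi'_i)^{-1}(x)\Big\}\quad\text{for all }s,t\in\R,\ x\in G',
\]
\[
\sfd'\big(\varphi'_i(y_i)\,x,\,x\big)=\inf\Big\{\sfd_j(x_j,y_jx_j)\ \Big|\ j\in I,\ x_j\in(\varphi'_j)^{-1}(x),\ y_j\in(\varphi'_j)^{-1}\big(\varphi'_i(y_i)\big)\Big\}\quad\text{for all }i\in I,\ y_i\in G_i,\ x\in G'.
\]
In both cases the inequality ``$\le$'' is immediate from \eqref{eq:inf-pseudodist}, because $\delta^i(s,x_i)$ and $\delta^i(t,y_i)$ are representatives of $\delta'_s(x)$ and $\delta'_t(x)$ (since $\varphi'_i$ is a morphism of scalable groups), while $x_j$ and $y_jx_j$ are representatives of $x$ and $\varphi'_i(y_i)x$. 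For ``$\ge$'' one fixes $\eps>0$, takes by \eqref{eq:inf-pseudodist} a pair $(u_j,v_j)$ of representatives of the two points inside a common $G_j$ with $\sfd_j(u_j,v_j)$ within $\eps$ of the left-hand side, and reshapes it into an admissible pair: for the first identity, when $s,t\ne0$ one sets $x_j\coloneqq\delta^j_{1/s}(u_j)$ and $y_j\coloneqq\delta^j_{1/t}(v_j)$, which lie in $(\varphi'_j)^{-1}(x)$ since $\varphi'_j\circ\delta^j_{1/s}=\delta'_{1/s}\circ\varphi'_j$ and $\delta'_{1/s}\circ\delta'_s=\mathrm{id}$, and which satisfy $\delta^j_s(x_j)=u_j$, $\delta^j_t(y_j)=v_j$; the cases $s=0$ or $t=0$ are checked directly (after a left translation normalising the representative of the identity), since a dilation by $0$ is the constant map $e_j$. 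For the second identity one sets $x_j\coloneqq v_j$ and $y_j\coloneqq u_jv_j^{-1}$, so that $\varphi'_j(y_j)=\varphi'_i(y_i)x\cdot x^{-1}=\varphi'_i(y_i)$ and, by left-invariance of $\sfd_j$, $\sfd_j(x_j,y_jx_j)=\sfd_j(v_j,u_j)$.

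For the second condition I would also use the description
\[
\{y\in G'\mid\sfd'(y,e)<\eta\}=\big\{\varphi'_i(y_i)\mid i\in I,\ y_i\in G_i,\ \sfd_i(e_i,y_i)<\eta\big\}.
\]
Here ``$\supseteq$'' holds because $y_i$ and $e_i$ represent $\varphi'_i(y_i)$ and $e$, so $\sfd'(\varphi'_i(y_i),e)\le\sfd_i(y_i,e_i)$; for ``$\subseteq$'', given $y$ with $\sfd'(y,e)<\eta$ one picks representatives $y_i$ of $y$ and $z_i$ of $e$ with $\sfd_i(y_i,z_i)<\eta$ and observes that $z_i^{-1}y_i$ again represents $y$ and satisfies $\sfd_i(e_i,z_i^{-1}y_i)=\sfd_i(z_i,y_i)<\eta$. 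Substituting the first identity into the reformulation of \ref{eq:delta_non-deg_lem} above yields exactly \ref{eq:delta_non-deg}; for \ref{eq:Rx_non-deg_lem}, the displayed ball description lets one replace the supremum over $\{y\in G':\sfd'(y,e)<\eta\}$ by the supremum over pairs $(i,y_i)$ with $\sfd_i(e_i,y_i)<\eta$, and then the second identity turns $\sfd'(\varphi'_i(y_i)x,x)$ into the inner infimum, giving precisely \ref{eq:Rx_non-deg}.

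The only delicate point is the proof of the two representation identities: the representatives of a given point of $G'$ inside a fixed $G_j$ form a whole coset of $\ker\varphi'_j$, so some care (through left-invariance of $\sfd_j$ and the bijectivity of $\delta^j_\lambda$ for $\lambda\ne0$) is needed when reshaping an almost-optimal pair of representatives into one of the special forms occurring in \ref{eq:delta_non-deg} and \ref{eq:Rx_non-deg}; moreover the degenerate case $\lambda=0$, in which the dilations are constant, has to be treated separately.
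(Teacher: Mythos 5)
Your proposal is correct and follows essentially the same route as the paper: rewrite \ref{eq:delta_non-deg_lem} and \ref{eq:Rx_non-deg_lem} as vanishing oscillations for \(\sfd'\) and then unwind the infimum-pseudodistance \eqref{eq:inf-pseudodist}, reshaping almost-optimal representatives into the special forms appearing in \ref{eq:delta_non-deg} and \ref{eq:Rx_non-deg}. The only difference is one of detail: the paper treats only the \ref{eq:delta_non-deg_lem}\(\Leftrightarrow\)\ref{eq:delta_non-deg} equivalence and asserts the passage to representatives of the form \(\delta^i_s(x_i),\delta^i_t(y_i)\) directly ``by definition of \(\sfd'\)'', whereas you make the reshaping (via \(\delta^j_{1/s}\), left-invariance, the ball description, and the \(\lambda=0\) case) explicit, which is a faithful filling-in rather than a different argument.
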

\begin{proof}
We just prove the equivalence \(\ref{eq:delta_non-deg_lem}\Leftrightarrow
\ref{eq:delta_non-deg}\), the argument for \(\ref{eq:Rx_non-deg_lem}\Leftrightarrow\ref{eq:Rx_non-deg}\) being similar.\\
	{\color{blue}\(\ref{eq:delta_non-deg_lem}\Longrightarrow\ref{eq:delta_non-deg}\)}
	Suppose \ref{eq:delta_non-deg_lem} holds. Fix \(x\in G'\) and \(t\in\R\).
	Given \(\eps>0\), there exists $ \eta > 0 $ such that
	\(\sfd'\big(\delta'_s(x),\delta'_t(x)\big)<\eps'\) for all
	\(s\in\R\) with \(|s-t|<\eta\). By definition of $ \sfd' $, there exist
	\(i\in I\) and points \(x_i,y_i\in G_i\) such that
	\(\varphi'_i(x_i)=\varphi'_i(y_i)=x\) and
	\(\sfd_i\big(\delta^i_s(x_i),\delta^i_t(y_i)\big)<\eps\).
	This implies \ref{eq:delta_non-deg}.\\
	{\color{blue}\ref{eq:delta_non-deg}$ \Longrightarrow $\ref{eq:delta_non-deg_lem}}
	Suppose \ref{eq:delta_non-deg} holds. Fix \(\eps>0\),
	\(x\in G'\), and \(t\in\R\). There exists \(\eta>0\)
	such that for every \(s\in\R\) with \(|s-t|<\eta\) there
	exist \(i\in I\), \(x_i,y_i\in(\varphi'_i)^{-1}(x)\) satisfying
	\(\sfd_i\big(\delta^i_s(x_i),\delta^i_t(y_i)\big)<\eps\).
	This implies \(\sfd'\big(\delta'_s(x),\delta'_t(x)\big)<\eps\),
	which shows that $ \delta'(\cdot,x) $ is continuous at $ t $. Thus \ref{eq:delta_non-deg_lem} holds.
\end{proof}
\begin{lemma}\label{lem:metric_limit_is_msg}
	Let \(\big(\{G_i\}_{i\in I},\{\varphi_{ij}\}_{i\leq j}\big)\)
	be a direct system of metric scalable groups satisfying \ref{eq:Rx_non-deg_lem}. Then its metric limit is a scalable group endowed with a compatible distance $ \sfd $. If also \ref{eq:delta_non-deg_lem} is satisfied, then the metric limit is a metric scalable group.
\end{lemma}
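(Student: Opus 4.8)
The plan is to isolate the one genuinely algebraic obstruction --- a priori $G'/Z(\sfd')$ need not even be a group --- and show that hypothesis \ref{eq:Rx_non-deg_lem} removes it by forcing $Z(\sfd')$ to be a normal scalable subgroup of $G'$; after that the statement reduces to transporting \ref{eq:Rx_non-deg_lem} and \ref{eq:delta_non-deg_lem} along the quotient projection and invoking Lemma \ref{lem:cont_operations}.

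First I would use only that $\sfd'$ is a compatible pseudodistance on $(G',\delta')$. Left-invariance gives at once that $Z(\sfd')$ is a subgroup: $\sfd'(x^{-1},e')=\sfd'(e',x)=\sfd'(x,e')$ and $\sfd'(xy,e')\leq\sfd'(xy,x)+\sfd'(x,e')=\sfd'(y,e')+\sfd'(x,e')$. The scaling identity $\sfd'(\delta'_\lambda(x),\delta'_\lambda(y))=|\lambda|\,\sfd'(x,y)$ together with $\delta'_\lambda(e')=e'$ gives $\delta'_\lambda\big(Z(\sfd')\big)\subseteq Z(\sfd')$ for every $\lambda$, and since for $\lambda\neq0$ the map $\delta'_\lambda$ is an automorphism of $G'$ with inverse $\delta'_{1/\lambda}$ it in fact restricts to a bijection of $Z(\sfd')$; thus $Z(\sfd')$ is a scalable subgroup. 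For normality, fix $g\in G'$ and $z\in Z(\sfd')$: by left-invariance
\[
\sfd'\big(gzg^{-1},e'\big)=\sfd'\big(zg^{-1},g^{-1}\big)=\sfd'\big(R_{g^{-1}}(z),R_{g^{-1}}(e')\big),
\]
and since $R_{g^{-1}}$ is continuous at $e'$ by \ref{eq:Rx_non-deg_lem} while $\sfd'(z,e')=0$, this quantity is below every $\eps>0$, hence vanishes; so $gzg^{-1}\in Z(\sfd')$ and $Z(\sfd')$ is normal in $G'$. Consequently $G'/Z(\sfd')$ is a group, and the classes of the relation ``$\sfd'(\cdot,\cdot)=0$'' coincide with the cosets of $Z(\sfd')$ (again by left-invariance). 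I would then verify the routine facts: $\sfd$ is well defined on $G'/Z(\sfd')$ because $\sfd'(xz_1,yz_2)=\sfd'(x,y)$ whenever $z_1,z_2\in Z(\sfd')$, it is a genuine distance (from $\sfd(\bar x,\bar y)=0$ one gets $y^{-1}x\in Z(\sfd')$, i.e.\ $\bar x=\bar y$), and the map $\delta$ of \eqref{eq:def_delta} descends to a family of dilations of $G'/Z(\sfd')$ (using that $\delta'_\lambda$ permutes $Z(\sfd')$ for $\lambda\neq0$); compatibility of $\sfd$ with $(G'/Z(\sfd'),\delta)$ is just the left-invariance and the scaling identity of $\sfd'$ read on cosets. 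This proves the first assertion.

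For the second assertion, assume in addition \ref{eq:delta_non-deg_lem} and let $\pi\colon G'\to G'/Z(\sfd')$ be the projection, a surjective morphism of scalable groups with $\sfd\big(\pi(x),\pi(y)\big)=\sfd'(x,y)$, in particular $1$-Lipschitz. I would transport the two conditions to the quotient: $\delta\big(\cdot,\pi(x)\big)=\pi\circ\delta'(\cdot,x)$ is continuous as a composition of a continuous map with the Lipschitz $\pi$; and $R_{\pi(x)}$ is continuous at $\pi(e')$ because, given $\eps>0$, one picks $\eta>0$ witnessing continuity of $R_x$ at $e'$, and then any $\bar w$ with $\sfd(\bar w,\pi(e'))<\eta$ has a representative $w$ with $\sfd'(w,e')<\eta$, whence $\sfd'(wx,x)<\eps$ and so $\sfd\big(R_{\pi(x)}(\bar w),R_{\pi(x)}(\pi(e'))\big)<\eps$. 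Applying Lemma \ref{lem:cont_operations} to the scalable group $G'/Z(\sfd')$ with the compatible distance $\sfd$ then yields: by item \ref{it:Op_Cauchy} that ${\sf Op}$ is continuous; by item \ref{it:inv_vs_Rx} that ${\sf Inv}$ is continuous at every point (since $\omega_{\sf Inv}(\bar x;\eps)=\omega_{R_{\bar x^{-1}}}(\pi(e');\eps)>0$ for all $\eps>0$); and by item \ref{it:delta_Cauchy} that $\delta$ is continuous. Hence $G'/Z(\sfd')$, equipped with the topology of the distance $\sfd$, is a topological group acted on continuously by $\delta$, i.e.\ the metric limit is a metric scalable group.

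The step I expect to be the crux is the normality of $Z(\sfd')$: this is the only point where the very existence of a group structure on the metric limit is at stake, and it is exactly the continuity of right translations at $e'$ --- condition \ref{eq:Rx_non-deg_lem} --- that prevents the failure signalled in the remark after Definition \ref{def:metric_limit}. Everything past that is bookkeeping: both non-degeneracy conditions survive the distance-preserving projection $\pi$ untouched, and Lemma \ref{lem:cont_operations} does the remaining work of upgrading pointwise continuity of right translations and of $\lambda\mapsto\delta_\lambda(x)$ to joint continuity of multiplication, continuity of inversion, and continuity of the dilation.
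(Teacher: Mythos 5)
Your proposal is correct and follows essentially the same route as the paper: prove that $Z(\sfd')$ is a normal scalable subgroup (normality being exactly where \ref{eq:Rx_non-deg_lem} enters, via continuity of right translations at the identity applied to elements at zero distance from $e'$), then invoke Lemma \ref{lem:cont_operations} items \ref{it:inv_vs_Rx}, \ref{it:Op_Cauchy}, \ref{it:delta_Cauchy} to obtain continuity of inversion, multiplication, and dilation. The only cosmetic difference is that you transfer the hypotheses to the quotient and apply Lemma \ref{lem:cont_operations} there, whereas the paper applies it on $(G',\sfd')$ and then passes continuity through the distance-preserving projection; both are equivalent bookkeeping.
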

\begin{proof}
	We start by showing that $ Z(\sfd') $ is a normal scalable subgroup of $ G' $, which ensures that the quotient space $ \big(G'/Z(\sfd'),\delta \big) $ is a scalable group. Given any \(x,y\in Z(\sfd')\), we have that
	\[
	\sfd'(xy,e')\leq\sfd'(xy,x)+\sfd'(x,e')=\sfd'(y,e')+\sfd'(x,e')=0,
	\]
	which shows that \(xy\in Z(\sfd')\) as well. Moreover, we have
	\(\sfd'(x^{-1},e')=\sfd'(xx^{-1},x)=\sfd'(e',x)=0\), which implies
	that \(x^{-1}\in Z(\sfd')\). All in all, \(Z(\sfd')\) is a subgroup of \(G'\).
	Given \(\lambda\in\R\) and \(x\in Z(\sfd')\), it holds that
	\(\sfd'\big(\delta'_\lambda(x),e'\big)=
	\sfd'\big(\delta'_\lambda(x),\delta'_\lambda(e')\big)=
	|\lambda|\,\sfd'(x,e')=0\), so that \(\delta'_\lambda(x)\in Z(\sfd')\).
	This shows that \(Z(\sfd')\) is a scalable subgroup of \(G'\).

	To show that $ Z(\sfd') $ is normal, fix any \(x\in Z(\sfd')\) and \(y\in G\). Since the map \(R_y\) continuous at \(e'\) by \ref{eq:Rx_non-deg_lem}, we have that \(\omega_{R_y}(e';1/n)>0\) holds for every \(n\in\N\).
	Since \(\sfd'(x,e')=0<\omega_{R_y}(e';1/n)\), we deduce that
	\(\sfd'(y^{-1}xy,e')=\sfd'(xy,y)<1/n\). By letting \(n\to\infty\),
	we conclude that \(y^{-1}xy\in Z(\sfd')\), thus proving that
	\(Z(\sfd')\) is a normal subgroup of \(G'\), as required. Hence \((G'/Z(\sfd'), \delta)\) is a scalable group. Finally, it can be readily checked that the induced distance \(\sfd\) on \(G'/Z(\sfd')\) is compatible with \((G'/Z(\sfd'), \delta)\), which concludes the first part of the claim.
	
	Assume then that \ref{eq:delta_non-deg_lem} holds and let us prove that \((G'/Z(\sfd'), \delta, \sfd)\) is a metric scalable group. We need to verify that the dilation $ \delta \colon \R \times G \to G $, the group operation  $ {\sf Op}\colon G\times G \to G $ and the inversion map $ {\sf Inv}\colon G \to G $ are continuous. Observe that a morphism fixing the subgroup $ Z(\sfd') $ on $(G',\sfd') $ is continuous if and only if the induced map on  $ \big(G'/Z(\sfd'),\sfd\big) $ is continuous. Then by \ref{eq:delta_non-deg_lem} and \ref{eq:Rx_non-deg_lem}, the continuity of $ \delta $ and $ {\sf Op} $ follows from Lemma \ref{lem:cont_operations} \ref{it:delta_Cauchy} and Lemma \ref{lem:cont_operations} \ref{it:Op_Cauchy}, respectively. Moreover, the continuity of $ {\sf Inv} $ is given by \ref{eq:Rx_non-deg_lem} and Lemma \ref{lem:cont_operations} \ref{it:inv_vs_Rx}. Hence the metric limit \((G'/Z(\sfd'), \delta, \sfd)\) is a metric scalable group, as claimed.
\end{proof}
We prove now the part of Theorem \ref{thm:DL_equiv_non-deg} concerning metric scalable groups.
\begin{theorem}[Direct limits of MSGs]\label{thm:MSG_DL}
	Let  \(\big(\{G_i\}_{i\in I},\{\varphi_{ij}\}_{i\leq j}\big)\)
	be a direct system of metric scalable groups. Then the following are equivalent:
	\begin{itemize}
		\item[\(\rm i)\)] The direct system
		\(\big(\{G_i\}_{i\in I},\{\varphi_{ij}\}_{i\leq j}\big)\) is non-degenerate (in the sense of Definition \ref{def:non-deg_DS_MSG}).
		\item[\(\rm ii)\)] The direct limit of \(\big(\{G_i\}_{i\in I},\{\varphi_{ij}\}_{i\leq j}\big)\) in the category
		of metric scalable groups exists. Moreover, it equals the metric limit of \(\big(\{G_i\}_{i\in I},\{\varphi_{ij}\}_{i\leq j}\big)\) as in Definition \ref{def:metric_limit}.
	\end{itemize}
\end{theorem}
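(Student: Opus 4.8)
The plan is to prove the equivalence $\mathrm{i)}\Leftrightarrow\mathrm{ii)}$ by exploiting the universal property of the direct limit $\big(G',\{\varphi'_i\}_{i\in I}\big)$ in the category of scalable groups, together with the infimum-pseudodistance $\sfd'$ and the characterizations of Cauchy-continuity collected in Lemma \ref{lemma:Cauchy_continuity_vs_moduli} and Lemma \ref{lem:cont_operations}.

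\textbf{Direction $\mathrm{i)}\Rightarrow\mathrm{ii)}$.} Assume the system is non-degenerate. By Lemma \ref{lem:metric_limit_is_msg}, the metric limit $(G'/Z(\sfd'),\delta,\sfd)$ is a metric scalable group; call it $G$, and let $\pi\colon G'\to G$ be the quotient projection and $\varphi_i\coloneqq\pi\circ\varphi'_i\colon G_i\to G$. First I would check that each $\varphi_i$ is a morphism of MSGs: it is a morphism of scalable groups since $\varphi'_i$ and $\pi$ are, and it is $1$-Lipschitz because $\sfd\big(\varphi_i(x_i),\varphi_i(y_i)\big)=\sfd'\big(\varphi'_i(x_i),\varphi'_i(y_i)\big)\le\sfd_i(x_i,y_i)$ directly from the definition \eqref{eq:inf-pseudodist} of $\sfd'$. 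The commutativity $\varphi_j\circ\varphi_{ij}=\varphi_i$ is inherited from the scalable-group direct limit, so $\big(G,\{\varphi_i\}_{i\in I}\big)$ is a target. For the universal property, let $\big(H,\{\psi_i\}_{i\in I}\big)$ be any target in the category of MSGs. Since $H$ is in particular a target in the category of scalable groups, there is a unique morphism of scalable groups $\Phi'\colon G'\to H$ with $\Phi'\circ\varphi'_i=\psi_i$; the point is to show $\Phi'$ factors through $G$ as a $1$-Lipschitz map. One shows $\Phi'$ is $1$-Lipschitz with respect to $\sfd'$: given $x,y\in G'$ and a representation with $\varphi'_i(x_i)=x$, $\varphi'_i(y_i)=y$, we have $\sfd_H\big(\Phi'(x),\Phi'(y)\big)=\sfd_H\big(\psi_i(x_i),\psi_i(y_i)\big)\le\sfd_i(x_i,y_i)$, and taking the infimum over all such representations gives $\sfd_H\big(\Phi'(x),\Phi'(y)\big)\le\sfd'(x,y)$. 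In particular $\Phi'$ vanishes on $Z(\sfd')$ (in the sense of sending it to $e_H$), hence descends to a well-defined $1$-Lipschitz morphism of scalable groups $\Phi\colon G\to H$ with $\Phi\circ\varphi_i=\psi_i$; it is automatically continuous, hence a morphism of MSGs. Uniqueness of $\Phi$ follows from the fact that $\bigcup_i\varphi_i(G_i)$ is dense in $G$ (indeed it equals $G'/Z(\sfd')$, which is dense in $G$—actually here $G=G'/Z(\sfd')$ with no completion, so it is surjective) together with continuity. This establishes $\mathrm{ii)}$.

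\textbf{Direction $\mathrm{ii)}\Rightarrow\mathrm{i)}$.} Conversely, suppose the direct limit in the category of MSGs exists and coincides with the metric limit $(G'/Z(\sfd'),\delta,\sfd)$. Then by definition of MSG, the dilation map $\delta\colon\R\times(G'/Z(\sfd'))\to G'/Z(\sfd')$ and the inversion map are continuous, and $\sfd$ induces the topology. Pulling back along the quotient projection $\pi\colon G'\to G'/Z(\sfd')$, continuity of $\delta$ forces $\delta'(\cdot,x)\colon\R\to G'$ to be continuous for every $x\in G'$ (it suffices to compose with $\pi$ and use that $\sfd'(z,e')=0$ iff $\pi(z)=e'$, so convergence in $\sfd'$ is detected after quotienting), which is exactly condition \ref{eq:delta_non-deg_lem}. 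Similarly, continuity of the inversion map on $G'/Z(\sfd')$ together with Lemma \ref{lem:cont_operations}\ref{it:inv_vs_Rx} (applied on $G'$ with the pseudodistance $\sfd'$) yields that $R_x$ is continuous at $e'$ for every $x\in G'$, i.e.\ \ref{eq:Rx_non-deg_lem}. Hence the system is non-degenerate.

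\textbf{Main obstacle.} The delicate point is the well-posedness and $1$-Lipschitz continuity of the descended map $\Phi\colon G'/Z(\sfd')\to H$ in the forward direction, and dually, in the backward direction, correctly transferring continuity statements between $G'$ (with the pseudodistance $\sfd'$, where only left translations are automatically continuous) and the quotient $G'/Z(\sfd')$. Concretely one must be careful that a morphism of $G'$ fixing $Z(\sfd')$ is continuous for $\sfd'$ if and only if the induced map on the quotient is continuous for $\sfd$—this is the observation already used in the proof of Lemma \ref{lem:metric_limit_is_msg}—and that the infimum in \eqref{eq:inf-pseudodist} genuinely controls $\sfd_H$ through any target morphism $\psi_i$. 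Once these bookkeeping points are handled, everything else is a routine diagram chase combined with the continuity lemmas already proved.
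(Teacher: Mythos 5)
Your proposal is correct and follows essentially the same route as the paper's proof: the metric limit is an MSG by Lemma \ref{lem:metric_limit_is_msg}, the maps \(\varphi_i=\pi\circ\varphi'_i\) make it a target, the scalable-group morphism \(\Phi'\) is shown to be \(1\)-Lipschitz for \(\sfd'\) and hence factors through the quotient, and the converse transfers continuity of the operations from \(G'/Z(\sfd')\) back to \((G',\sfd')\). The only cosmetic difference is that in \({\rm ii)}\Rightarrow{\rm i)}\) you obtain \ref{eq:Rx_non-deg_lem} from continuity of the inversion via Lemma \ref{lem:cont_operations} \ref{it:inv_vs_Rx}, whereas the paper reads off continuity of right translations on the quotient directly; both steps are immediate.
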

\begin{proof}
	{\color{blue}\({\rm i)}\Longrightarrow{\rm ii)}\)} Suppose i) holds. Let \(\big(G',\{\varphi'_i\}_{i\in I}\big)\) be the direct limit of \(\big(\{G_i\}_{i\in I},\{\varphi_{ij}\}_{i\leq j}\big)\) in the category of scalable groups. Denoting $ G\coloneqq G'/Z(\sfd') $, let \((G,\delta, \sfd \big)\) be the metric limit of \(\big(\{G_i\}_{i\in I},\{\varphi_{ij}\}_{i\leq j}\big)\).  By Lemma \ref{lem:metric_limit_is_msg}, \((G,\delta, \sfd \big)\) is a metric scalable group.	Let \(\pi\colon G'\to G\) be the projection \( \pi(x)=xZ(\sfd')\) and define $ \varphi_i \colon G_i \to G $ for each $ i\in I $ as
	\[
	\varphi_i\coloneqq\pi \circ \varphi'_i.
	\]
	Then by definition of the infimum-distance $ \sfd $, each $ \varphi_i \colon G_i \to G'$ is a 1-Lipschitz map, hence a morphism of metric scalable groups. Since \(\varphi_i=\varphi_j\circ\varphi_{ij}\) for all \(i,j\in I\),
	\(i\leq j\), \(\big(G,\{\varphi_i\}_{i\in I}\big)\) is a target of \(\big(\{G_i\}_{i\in I},\{\varphi_{ij}\}_{i\leq j}\big)\) in the
	category of metric scalable groups.
	
	To prove that it satisfies the universal property, fix any target
	\(\big(H,\{\psi_i\}_{i\in I}\big)\). Since \(\big(H,\{\psi_i\}_{i\in I}\big)\)
	is a target also in the category of scalable groups, there exists a
	unique morphism of scalable groups \(\Phi'\colon G'\to H\) such that
	\(\Phi'\circ\varphi'_i=\psi_i\) holds for every \(i\in I\). We claim then that the map $ \Phi' $ is 1-Lipschitz. Indeed, given \(i\in I\), \(x_i\in G_i\), and \(y_i\in G_i\) with \(\varphi'_i(x_i)=x\) and \(\varphi'_i(y_i)=y\), we have
	\[
	\sfd_H\big(\Phi'(x),\Phi'(y)\big)
	=\sfd_H\big((\Phi'\circ\varphi'_i)(x),(\Phi'\circ\varphi'_i)(y)\big)
	=\sfd_H\big(\psi_i(x_i),\psi_i(y_i)\big)\leq\sfd_i(x_i,y_i),
	\]
	where we denote by \(\sfd_H\) the distance on \(H\). This proves $ \sfd_H\big(\Phi'(x),\Phi'(y)\big) < \sfd'(x,y) $, as wanted.
	
	It now follows that $ \ker(\pi) = Z(\sfd') \sus \ker(\Phi') $, and so by the universal property of quotients, there exists a unique map \(\Phi\colon G\to H\) such that \(\Phi\circ \pi=\Phi'\). Moreover, for any \(xZ(\sfd'),yZ(\sfd')\in G\) it holds that
	\[
	\sfd_H\big(\Phi(xZ(\sfd')),\Phi(yZ(\sfd'))\big)=\sfd_H\big(\Phi'(x),\Phi'(y)\big)
	\leq\sfd'(x,y)=\sfd\big(xZ(\sfd'),yZ(\sfd')\big),
	\]
	which shows that also \(\Phi\) is a \(1\)-Lipschitz map. Hence \(\Phi\) is a morphism of metric scalable groups, and  \(\big(G,\{\varphi_i\}_{i\in I}\big)\) satisfies the universal property. Then the metric limit \((G,\delta,\sfd)\) is the direct limit of \(\big(\{G_i\}_{i\in I},\{\varphi_{ij}\}_{i\leq j}\big)\) in the
	category of metric scalable groups, yielding the sought conclusion.\\
{\color{blue}\({\rm ii)}\Longrightarrow{\rm i)}\)} Suppose ii) holds. Then, in particular, the metric limit of \(\big(\{G_i\}_{i\in I},\{\varphi_{ij}\}_{i\leq j}\big)\) is a metric scalable group. Hence the dilation map and the right translation are continuous on $ G'/Z(\sfd') $, which implies the continuity of these operations on $ G' $. Thus \ref{eq:delta_non-deg_lem} and \ref{eq:Rx_non-deg_lem} are satisfied, and $\big(\{G_i\}_{i\in I},\{\varphi_{ij}\}_{i\leq j}\big)$ is non-degenerate.
\end{proof}
\subsection{Complete metric scalable groups}\label{ss:DL_CMSG}
We consider the category of CMSGs as a subcategory of metric scalable groups; any CMSG is in particular a MSG, and the morphisms of CMSGs are the morphisms of MSGs.

Let \((G,\delta,\sfd)\) and \((\bar G,\bar\delta,\bar\sfd)\) be
a MSG and a CMSG, respectively. Suppose \((\bar G,\bar\sfd)\)
is the metric completion of \((G,\sfd)\), with isometric embedding
\(\iota\colon G\hookrightarrow\bar G\). If \(\iota\) is a morphism
of scalable groups, then we say that \(\bar G\)
is the \emph{completion of \(G\) as a metric scalable group}.
\begin{lemma}[Extension of MSGs]\label{lem:suff_cond_Cauchy_op}
Let \((G,\delta,\sfd)\) be a metric scalable group. Suppose
the inversion map \({\sf Inv}\colon G\to G\) is Cauchy-continuous.
Then the metric completion \((\bar G,\bar\sfd)\) of \((G,\sfd)\) can be
uniquely endowed with a metric scalable group structure
\((\bar G,\bar\delta,\bar\sfd)\) in such a way that
\(\bar G\) is the completion of \(G\) as a metric scalable group.
\end{lemma}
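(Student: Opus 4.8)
The plan is to construct $\bar\delta$ and the group operations on $\bar G$ by extending the corresponding maps on $G$ to the completion, using the Extension Theorem \ref{thm:extension_to_completion} together with the Cauchy-continuity results of Lemma \ref{lem:cont_operations}. The crucial point is that, since $\sfd$ is compatible with $(G,\delta)$, the right translations $R_x$ are automatically continuous at $e$ (indeed, every $R_x$ is an isometry composed appropriately, or more directly: $R_x$ continuous at $e$ follows from left-invariance and the fact that $\sfd$ induces the topology of the topological scalable group $G$, so group multiplication is jointly continuous hence $R_x$ is continuous everywhere, in particular at $e$). Therefore Lemma \ref{lem:cont_operations}\ref{it:Op_Cauchy} applies and ${\sf Op}\colon G\times G\to G$ is Cauchy-continuous; similarly, since $\delta$ is continuous as a MSG, $\delta(\cdot,x)$ is continuous for every $x$, so Lemma \ref{lem:cont_operations}\ref{it:delta_Cauchy} gives that $\delta\colon\R\times G\to G$ is Cauchy-continuous. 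The inversion map is Cauchy-continuous by hypothesis.

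First I would extend these three maps. Since $(\bar G,\bar\sfd)$ is complete and each of ${\sf Inv}$, ${\sf Op}$, $\delta$ is a Cauchy-continuous map into $G\subseteq\bar G$ (precompose the codomain with $\iota$; note $\iota$ is an isometric embedding so Cauchy-continuity into $G$ implies Cauchy-continuity into $\bar G$), Theorem \ref{thm:extension_to_completion} produces unique continuous extensions $\overline{\sf Inv}\colon\bar G\to\bar G$, $\overline{\sf Op}\colon\bar G\times\bar G\to\bar G$ (using that $\bar G\times\bar G$ is the metric completion of $G\times G$, which follows from the remark on product pseudodistances), and $\bar\delta\colon\R\times\bar G\to\bar G$. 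Next I would verify that these extensions satisfy the algebraic identities defining a scalable group and a compatible distance. This is routine: each identity (associativity of $\overline{\sf Op}$, the inverse axioms, $\bar\delta_0\equiv\bar e$, $\bar\delta_\lambda\circ\bar\delta_\mu=\bar\delta_{\lambda\mu}$, left-invariance $\bar\sfd(xy,xz)=\bar\sfd(y,z)$, and $\bar\sfd(\bar\delta_\lambda x,\bar\delta_\lambda y)=|\lambda|\bar\sfd(x,y)$) holds on the dense subset $\iota(G)$, and both sides are continuous in their arguments on $\bar G$, so the identities propagate to all of $\bar G$ by density; one must also check $\bar\delta_\lambda\in{\rm Aut}(\bar G)$ for $\lambda\ne 0$, which follows since $\bar\delta_{1/\lambda}$ is a continuous two-sided inverse. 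Thus $(\bar G,\bar\delta,\bar\sfd)$ is a MSG, it is complete, and $\iota$ is an isometric scalable-group morphism with dense image, so $\bar G$ is the completion of $G$ as a metric scalable group.

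Finally, for uniqueness: if $(\bar G,\bar\delta',\bar\sfd)$ is another metric scalable group structure on the completion making $\iota$ a scalable-group morphism, then the extended operations $\overline{\sf Op}'$, $\overline{\sf Inv}'$, $\bar\delta'$ are continuous maps agreeing with ${\sf Op}$, ${\sf Inv}$, $\delta$ on the dense set $\iota(G)$, hence coincide with $\overline{\sf Op}$, $\overline{\sf Inv}$, $\bar\delta$ by the uniqueness clause of Theorem \ref{thm:extension_to_completion} (equivalently, by continuity and density of $\iota(G)$). The main obstacle is the verification that $\overline{\sf Op}$ is Cauchy-continuous from $G\times G$; concretely, one needs $R_x$ continuous at $e$ for every $x\in G$, and I would spell out why this is automatic for a MSG — it is exactly the joint continuity of multiplication built into the topological scalable group axiom, combined with the fact that $\sfd$ metrizes that topology — so that Lemma \ref{lem:cont_operations}\ref{it:Op_Cauchy} is genuinely applicable here without any extra hypothesis beyond Cauchy-continuity of ${\sf Inv}$.
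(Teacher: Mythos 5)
Your proposal is correct and follows essentially the same route as the paper: establish Cauchy-continuity of \({\sf Op}\) and \(\delta\) via Lemma \ref{lem:cont_operations} \ref{it:Op_Cauchy}, \ref{it:delta_Cauchy}, use the hypothesis on \({\sf Inv}\), extend all three maps to the completion by Theorem \ref{thm:extension_to_completion}, and verify the scalable-group axioms and uniqueness by density and continuity. The only cosmetic difference is that you get continuity of \(R_x\) at \(e\) from the topological-group axioms of a MSG, whereas the paper deduces it from the Cauchy-continuity of \({\sf Inv}\) via Lemma \ref{lem:cont_operations} \ref{it:inv_vs_Rx}; both are valid.
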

\begin{proof}
First of all, observe that the assumption on \(\sf Inv\) and
items b), d) of Lemma \ref{lem:cont_operations} grant that
\[\begin{split}
G\times G\ni(x,y)&\longmapsto\iota(xy)\in\bar G,\\
\R\times G\ni(\lambda,x)&\longmapsto\iota\big(\delta_\lambda(x)\big)\in\bar G,\\
G\ni x&\longmapsto\iota(x^{-1})\in\bar G
\end{split}\]
are Cauchy-continuous, where \(\iota\colon G\hookrightarrow\bar G\)
is the natural isometric embedding. Therefore, we know from Theorem
\ref{thm:extension_to_completion} that the above maps can be uniquely extended
to continuous operations
\[\begin{split}
\bar G\times\bar G\ni(x,y)&\longmapsto xy\in\bar G,\\
\R\times\bar G\ni(\lambda,x)&\longmapsto\bar\delta_\lambda(x)\in\bar G,\\
\bar G\ni x&\longmapsto x^{-1}\in\bar G.
\end{split}\]
By a standard approximation argument, one can readily check that the resulting
structure \((\bar G,\bar\delta,\bar\sfd)\) is a complete metric scalable group
and that \(\iota\) is a morphism of metric scalable groups.
\end{proof}
We now introduce a notion of non-degenerate direct system in the category
of CMSGs.
\begin{definition}[Non-degenerate direct system of CMSGs]
\label{def:non-deg_DS_CMSG}
Let \(\big(\{G_i\}_{i\in I},\{\varphi_{ij}\}_{i\leq j}\big)\) be a direct system of complete metric scalable groups. Equip its direct limit \(\big(G',\{\varphi'_i\}_{i\in I}\big)\) in the category of scalable groups with the infimum-pseudodistance $ \sfd' $. Then we say that \(\big(\{G_i\}_{i\in I},\{\varphi_{ij}\}_{i\leq j}\big)\) is \emph{non-degenerate} provided it
satisfies \ref{eq:delta_non-deg_lem} from Definition \ref{def:non-deg_DS_MSG}
and the following condition:
\begin{enumerate}[label={\rm (C3)}]
	\item \label{eq:inv_non-deg_lem}$ {\sf Inv} \colon G' \to G'$ is Cauchy-continuous.
\end{enumerate}
\end{definition}
Observe that, by Lemma \ref{lem:cont_operations} \ref{it:inv_vs_Rx}, property \ref{eq:inv_non-deg_lem} implies \ref{eq:Rx_non-deg_lem}. Hence, a non-degenerate direct system in the category of CMSGs is non-degenerate also as a direct system of MSGs. However, we do not
know whether the converse implication holds (but cf.\ Lemma \ref{lem:nilpotent_inv_cauchy}).
Notice also that, thanks to Lemma \ref{lem:suff_cond_Cauchy_op},
\ref{eq:inv_non-deg_lem} grants that the metric completion of the metric limit
of \(\big(\{G_i\}_{i\in I},\{\varphi_{ij}\}_{i\leq j}\big)\) is well-defined.
\medskip

In analogy with Proposition \ref{prop:char_non-def_DS}, condition
\ref{eq:inv_non-deg_lem} can be characterized in terms of the
net \(\{G_i\}_{i\in I}\). We omit the proof of the
following statement, as it is similar to the one of Proposition \ref{prop:char_non-def_DS}.
\begin{proposition}\label{prop:char_non-def_DS_complete}
	Let \(\big(\{G_i\}_{i\in I},\{\varphi_{ij}\}_{i\leq j}\big)\)
	be a direct system of complete metric scalable groups. Denote by
	\(\big(G',\{\varphi'_i\}_{i\in I}\big)\) the direct limit of
	\(\big(\{G_i\}_{i\in I},\{\varphi_{ij}\}_{i\leq j}\big)\)
	as scalable groups and by \(\sfd'\) the infimum-pseudodistance on \(G'\).
	Then the condition \ref{eq:inv_non-deg_lem} can be equivalently formulated as:
\begin{enumerate}[label={\rm (C3')}]

	\item \label{eq:inv_non-deg} For all $ T\subset G' $ totally bounded,
	\[
	\inf_{\eta>0}\;\sup_{\substack{x_i,y_i\in G_i:\\
			\varphi'_i(x_i)\in T,\\\sfd_i(y_i,x_i)<\eta}}\;
	\inf\Big\{\sfd_j(y_j^{-1},x_j^{-1})\;\Big|\;
	x_j\in(\varphi'_j)^{-1}\big(\varphi'_i(x_i)\big),\,
	y_j\in(\varphi'_j)^{-1}\big(\varphi'_i(y_i)\big)\Big\}=0.
	\]
\end{enumerate}
\end{proposition}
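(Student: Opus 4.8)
The plan is to mimic the proof of Proposition~\ref{prop:char_non-def_DS}, whose argument we are told is essentially the same; the only genuine new ingredient is that Cauchy-continuity of $\sf Inv$ must be unwound through the characterization of Cauchy-continuity in terms of moduli of continuity on totally bounded sets (Lemma~\ref{lemma:Cauchy_continuity_vs_moduli}\,\ref{it:unif_cont}). So first I would recall that, by that lemma, $\sf Inv\colon G'\to G'$ is Cauchy-continuous if and only if it is uniformly continuous on each totally bounded subset $T\subseteq G'$, i.e.\ if and only if for every totally bounded $T$ and every $\eps>0$ there is $\eta>0$ such that $\sfd'\big({\sf Inv}(x),{\sf Inv}(y)\big)<\eps$ whenever $x\in T$, $y\in G'$ and $\sfd'(x,y)<\eta$. (Here one should also note that we may enlarge $T$ so that it is closed under the operations needed, but in fact no enlargement is necessary since the statement \ref{eq:inv_non-deg} already quantifies over all totally bounded $T$.) The remaining task is to translate both the hypothesis ``$\sfd'(x,y)<\eta$'' and the conclusion ``$\sfd'\big({\sf Inv}(x),{\sf Inv}(y)\big)<\eps$'' into statements about the approximating groups $G_i$ via the definition~\eqref{eq:inf-pseudodist} of the infimum-pseudodistance, exactly as in Proposition~\ref{prop:char_non-def_DS}.

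For the implication \ref{eq:inv_non-deg_lem}$\Rightarrow$\ref{eq:inv_non-deg}: fix a totally bounded $T\subseteq G'$ and $\eps>0$, and take $\eta>0$ from uniform continuity of $\sf Inv$ on $T$ (one should take a slightly smaller threshold, say $\eta$ witnessing $\eps/2$, to have room for the infima). Then for any $x_i,y_i\in G_i$ with $\varphi'_i(x_i)\in T$ and $\sfd_i(y_i,x_i)<\eta$, setting $x\coloneqq\varphi'_i(x_i)$ and $y\coloneqq\varphi'_i(y_i)$ we have $\sfd'(x,y)\le\sfd_i(y_i,x_i)<\eta$, hence $\sfd'(x^{-1},y^{-1})<\eps/2$; by definition of $\sfd'$ there is then $j\ge i$ and $x_j\in(\varphi'_j)^{-1}(x^{-1}) $, $y_j'\in(\varphi'_j)^{-1}(y^{-1})$ — here one must be mildly careful that the representatives of $x^{-1}$ and $y^{-1}$ in $G_j$ can be taken to be the inverses $\varphi_{ij}(x_i)^{-1}$ and $\varphi_{ij}(y_i)^{-1}$, which indeed map to $x^{-1},y^{-1}$ since $\varphi'_j$ is a group homomorphism — with $\sfd_j\big(y_j^{\prime},x_j\big)<\eps/2$; relabelling, this gives the supremum-over-inner-infimum expression in \ref{eq:inv_non-deg} bounded by $\eps/2<\eps$ for this choice of $\eta$, and taking the infimum over $\eta>0$ yields $0$. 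The converse \ref{eq:inv_non-deg}$\Rightarrow$\ref{eq:inv_non-deg_lem} is the symmetric unravelling: given totally bounded $T$ and $\eps>0$, the vanishing of the expression in \ref{eq:inv_non-deg} produces $\eta>0$ such that whenever $x\in T$, $y\in G'$, $\sfd'(x,y)<\eta$, we may pick $i$ and representatives $x_i,y_i$ with $\varphi'_i(x_i)=x$, $\varphi'_i(y_i)=y$, $\sfd_i(y_i,x_i)<\eta$ (possible up to shrinking $\eta$), whence there is $j\ge i$ and representatives of $x^{-1},y^{-1}$ in $G_j$ at $\sfd_j$-distance $<\eps$, so $\sfd'(x^{-1},y^{-1})<\eps$; this is exactly uniform continuity of $\sf Inv$ on $T$, hence \ref{eq:inv_non-deg_lem} by Lemma~\ref{lemma:Cauchy_continuity_vs_moduli}.

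I expect the main (minor) obstacle to be bookkeeping rather than conceptual: one has to be scrupulous about the order of the quantifiers — the definition~\eqref{eq:inf-pseudodist} lets one choose a common index and representatives realizing a distance up to any prescribed error, and in \ref{eq:inv_non-deg} the index $j$ (where one measures $\sfd_j(y_j^{-1},x_j^{-1})$) may need to be strictly larger than the index $i$ (where $x_i,y_i$ live), which matches the structure of the displayed formula. One also has to check the compatibility point noted above, namely that $(\varphi'_j)^{-1}\big(\varphi'_i(x_i)\big)$ contains $\varphi_{ij}(x_i)$ and, since inverses are preserved, that the representatives of the inverses can be taken consistently; this is routine given that all the $\varphi'_i,\varphi_{ij}$ are group homomorphisms. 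Since the excerpt explicitly authorizes omitting this proof by analogy with Proposition~\ref{prop:char_non-def_DS}, in the paper I would simply record the statement and write ``the proof is entirely analogous to that of Proposition~\ref{prop:char_non-def_DS}, replacing the use of pointwise/uniform continuity by the characterization of Cauchy-continuity in Lemma~\ref{lemma:Cauchy_continuity_vs_moduli}\,\ref{it:unif_cont}.''
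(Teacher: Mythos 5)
Your proposal is correct and follows exactly the route the paper intends: the paper omits this proof, stating it is analogous to Proposition \ref{prop:char_non-def_DS}, and your argument carries out precisely that analogy, with the only new ingredient being the reduction of Cauchy-continuity of \({\sf Inv}\) to uniform continuity on totally bounded sets via Lemma \ref{lemma:Cauchy_continuity_vs_moduli}, followed by the same unwinding of the infimum-pseudodistance \eqref{eq:inf-pseudodist} (using that representatives of \(x^{-1}\) in \(G_j\) are the inverses of representatives of \(x\)). The quantifier bookkeeping and the choice of thresholds you describe are handled correctly, so no gap remains.
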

Together with Theorem \ref{thm:MSG_DL}, the following result proves Theorem \ref{thm:DL_equiv_non-deg}.
\begin{theorem}[Direct limits of CMSGs]
\label{thm:DL_of_CMSG}
Let \(\big(\{G_i\}_{i\in I},\{\varphi_{ij}\}_{i\leq j}\big)\) be a direct
system of complete metric scalable groups.
Then the following conditions are equivalent:
\begin{itemize}
\item[\(\rm i)\)] The direct system
\(\big(\{G_i\}_{i\in I},\{\varphi_{ij}\}_{i\leq j}\big)\)
is non-degenerate (in the sense of Definition \ref{def:non-deg_DS_CMSG}).
\item[\(\rm ii)\)] The direct limit of \(\big(\{G_i\}_{i\in I},\{\varphi_{ij}\}_{i\leq j}\big)\) in the category
of complete metric scalable groups exists. Moreover, it equals the metric completion of the metric limit of \(\big(\{G_i\}_{i\in I},\{\varphi_{ij}\}_{i\leq j}\big)\).
\end{itemize}
\end{theorem}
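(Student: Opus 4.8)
The plan is to bootstrap from the MSG case (Theorem \ref{thm:MSG_DL}) by adding a completion step governed by the Cauchy-continuity of inversion.

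\emph{Proof that {\rm i)} implies {\rm ii)}.} Assume the system is non-degenerate in the sense of Definition \ref{def:non-deg_DS_CMSG}, so that \ref{eq:delta_non-deg_lem} and \ref{eq:inv_non-deg_lem} hold. By Lemma \ref{lem:cont_operations} \ref{it:inv_vs_Rx}, condition \ref{eq:inv_non-deg_lem} forces \ref{eq:Rx_non-deg_lem}, hence the system is non-degenerate as a direct system of MSGs, and Theorem \ref{thm:MSG_DL} gives that the metric limit \(M\coloneqq\big(G'/Z(\sfd'),\delta,\sfd\big)\) is a MSG and is the direct limit of the system in the category of MSGs, with structure morphisms \(\varphi_i=\pi\circ\varphi'_i\). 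The next step is to show that inversion on \(M\) is Cauchy-continuous: the quotient map \(\pi\colon(G',\sfd')\to\big(G'/Z(\sfd'),\sfd\big)\) is surjective and satisfies \(\sfd(\pi(x),\pi(y))=\sfd'(x,y)\), so a \(\sfd\)-Cauchy sequence in \(M\) lifts, upon choosing representatives, to a \(\sfd'\)-Cauchy sequence in \(G'\), whose image under \({\sf Inv}\) is \(\sfd'\)-Cauchy by \ref{eq:inv_non-deg_lem}, and pushing forward through \(\pi\) shows that the original sequence has \(\sfd\)-Cauchy image under inversion. Lemma \ref{lem:suff_cond_Cauchy_op} then endows the metric completion \((\bar G,\bar\sfd)\) of \(M\) with a CMSG structure \((\bar G,\bar\delta,\bar\sfd)\) for which the canonical embedding \(\iota\colon M\hookrightarrow\bar G\) is a morphism of MSGs.

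Setting \(\hat\varphi_i\coloneqq\iota\circ\varphi_i\colon G_i\to\bar G\), each \(\hat\varphi_i\) is a composition of morphisms of MSGs, hence a morphism of CMSGs, and the compatibility \(\hat\varphi_j\circ\varphi_{ij}=\hat\varphi_i\) shows that \(\big(\bar G,\{\hat\varphi_i\}_{i\in I}\big)\) is a target of the system in the category of CMSGs. To verify the universal property, let \(\big(H,\{\psi_i\}_{i\in I}\big)\) be any target in this category; since it is in particular a target of MSGs, Theorem \ref{thm:MSG_DL} provides a unique morphism of MSGs \(\Psi\colon M\to H\) with \(\Psi\circ\varphi_i=\psi_i\). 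As \(\Psi\) is \(1\)-Lipschitz, hence Cauchy-continuous, and \(H\) is complete, Theorem \ref{thm:extension_to_completion} yields a unique continuous extension \(\bar\Psi\colon\bar G\to H\); a density argument using the joint continuity of the operations on \(\bar G\) (granted by Lemma \ref{lem:suff_cond_Cauchy_op}) and on \(H\) shows that \(\bar\Psi\) is a \(1\)-Lipschitz morphism of scalable groups, i.e.\ a morphism of CMSGs, and clearly \(\bar\Psi\circ\hat\varphi_i=\Psi\circ\varphi_i=\psi_i\). Uniqueness follows because any competitor agrees with \(\Psi\) on \(M\) by the uniqueness part of Theorem \ref{thm:MSG_DL}, hence with \(\bar\Psi\) on the dense subset \(\iota(M)\subseteq\bar G\). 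Thus \(\bar G\) is the direct limit in the category of CMSGs and equals the metric completion of the metric limit, which is ii).

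\emph{Proof that {\rm ii)} implies {\rm i)}.} Assume ii), so that the metric completion \((\bar G,\bar\sfd)\) of the metric limit carries a compatible CMSG structure; in particular its dilation and inversion maps are continuous, hence Cauchy-continuous because \(\bar G\) is complete. Consider \(j\coloneqq\iota\circ\pi\colon G'\to\bar G\), which is a group homomorphism intertwining the dilations and satisfies \(\bar\sfd\big(j(x),j(y)\big)=\sfd'(x,y)\) for all \(x,y\in G'\). From \(j\big(\delta'_s(x)\big)=\bar\delta_s\big(j(x)\big)\) and the continuity of \(s\mapsto\bar\delta_s(j(x))\) one reads off, via the distance identity, the continuity of \(s\mapsto\delta'_s(x)\), that is, \ref{eq:delta_non-deg_lem}. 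Likewise, from \(j(x^{-1})=j(x)^{-1}\) and the Cauchy-continuity of inversion on \(\bar G\), lifting \(\sfd'\)-Cauchy sequences through \(j\) gives that \({\sf Inv}\colon G'\to G'\) is Cauchy-continuous, that is, \ref{eq:inv_non-deg_lem}. Hence the system is non-degenerate. The genuinely new point beyond Theorem \ref{thm:MSG_DL} is the completion step, namely verifying that \({\sf Inv}\) descends Cauchy-continuously to the metric limit so that Lemma \ref{lem:suff_cond_Cauchy_op} applies, and then checking that the extension \(\bar\Psi\) furnished by Theorem \ref{thm:extension_to_completion} respects the algebraic structure; both reduce to density arguments against the (Cauchy-)continuous operations on the complete spaces involved, so I do not expect an essential difficulty there, the remainder being a transcription of the MSG argument.
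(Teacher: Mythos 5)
Your proposal is correct and follows essentially the same route as the paper: deduce \ref{eq:Rx_non-deg_lem} from \ref{eq:inv_non-deg_lem} via Lemma \ref{lem:cont_operations} \ref{it:inv_vs_Rx}, apply Theorem \ref{thm:MSG_DL} to get the metric limit as MSG direct limit, descend Cauchy-continuity of inversion to the quotient so Lemma \ref{lem:suff_cond_Cauchy_op} equips the completion with a CMSG structure, and verify the universal property by extending the \(1\)-Lipschitz morphism to the completion; the converse likewise pulls the (Cauchy-)continuity of the operations on the completed limit back to \(G'\), exactly as in the paper. The extra details you spell out (lifting Cauchy sequences through \(\pi\), the isometric map \(j=\iota\circ\pi\)) are just explicit versions of steps the paper states more briefly.
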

\begin{proof}
{\color{blue}\({\rm i)}\Longrightarrow{\rm ii)}\)} Suppose i) holds. Due to Lemma \ref{lem:cont_operations} \ref{it:inv_vs_Rx},  \ref{eq:inv_non-deg_lem} implies \ref{eq:Rx_non-deg_lem}. Hence, by Theorem \ref{thm:MSG_DL}, the direct system \(\big(\{G_i\}_{i\in I},\{\varphi_{ij}\}_{i\leq j}\big)\) has a direct limit \((G'/Z(\sfd'), \{\phi_i\}_{i\in I})\) in the category of metric scalable groups. By \ref{eq:inv_non-deg_lem} the inversion map is Cauchy-continuous on $ G' $, yielding the Cauchy-continuity of the inversion on $ G'/Z(\sfd') $. By Lemma \ref{lem:suff_cond_Cauchy_op}, the metric limit \(\big(G'/Z(\sfd'), \delta, \sfd\big)\) admits a completion $ (G,\delta, \sfd) $ as a metric scalable group. Denoting by $ \iota \colon G'/Z(\sfd') \hookrightarrow G$ the isometric embedding, we define for every  $ i\in I $ the map $ \varphi_i \colon G_i \to G $ by
\[
\varphi_i\coloneqq\iota \circ \phi_i.
\]
Since each $ \varphi_i $ is a morphism of scalable groups, $ (G,\delta, \sfd) $ is a target of $\big(\{G_i\}_{i\in I},\{\varphi_{ij}\}_{i\leq j}\big)$.

To show that $\big(\{G_i\}_{i\in I},\{\varphi_{ij}\}_{i\leq j}\big)$ has the universal property, let $ (H,\{\psi_i\}_{i\in I}) $ be any target in the category of complete metric scalable groups. Since $ H $ is also a target in the category of metric scalable groups, there exists a unique morphism $ \Phi' \colon G'/Z(\sfd') \to H $ of metric scalable groups. In particular, $ \Phi' $ is 1-Lipschitz, thus it can be uniquely extended to a 1-Lipschitz map $ \Phi \colon G \to H $. By approximation, we can see that \(\Phi\) is the unique morphism of metric scalable groups such that \(\psi_i=\Phi\circ\varphi_i\) holds for every \(i\in I\). This shows that
$\big(\{G_i\}_{i\in I},\{\varphi_{ij}\}_{i\leq j}\big)$ satisfies the universal property.
Therefore, ii) is achieved.\\
{\color{blue}\({\rm ii)}\Longrightarrow{\rm i)}\)} Suppose ii) holds. In particular, the metric completion of the metric limit $ G'/Z(\sfd') $ is a complete metric scalable group. Hence the dilation, the group operation and the inversion map are Cauchy-continuous. Consequently, the corresponding maps on $ G' $ are Cauchy-continuous. We conclude that \ref{eq:delta_non-deg_lem} and \ref{eq:inv_non-deg_lem} hold, thus i) is satisfied.
\end{proof}
\begin{remark}\label{rmk:density_claim}{\rm
It is not clear if a direct system \(\big(\{G_i\}_{i\in I},\{\varphi_{ij}\}_{i\leq j}\big)\) of complete metric scalable groups that does not satisfy \ref{eq:delta_non-deg_lem} or \ref{eq:inv_non-deg_lem} can have a direct limit.
Indeed, Theorem \ref{thm:DL_of_CMSG} only implies that for such a direct system the construction using metric limit fails. However, it is fairly straightforward to show that, whenever there exists a direct limit \(\big(G,\{\varphi_{i}\}_{i\in I}\big)\), then it holds that
\[
\bigcup_{i\in I}\varphi_i(G_i)\quad\text{ is dense in }G.
\]
Indeed, calling \(\widetilde G\) the closure of \(\bigcup_{i\in I}\varphi_i(G_i)\)
in \(G\), it is easy to check that \(\widetilde G\) is a complete metric scalable
subgroup of \(G\) and that \(\big(\widetilde G,\{\varphi_i\}_{i\in I}\big)\) is
a target. Denote by \(\iota\colon\widetilde G\to G\) the inclusion map,
which clearly is a morphism of CMSGs. Now fix any target
\((H,\{\psi_i\}_{i\in I})\). Call \(\Phi\colon G\to H\) the unique
morphism such that \(\psi_i=\Phi\circ\varphi_i\) for every \(i\in I\).
Then \(\widetilde\Phi\coloneqq\Phi\circ\iota\) is the unique morphism
\(\widetilde\Phi\colon\widetilde G\to H\) such that \(\psi_i=\widetilde\Phi\circ\varphi_i\)
for every \(i\in I\), which implies that actually \(\widetilde G=G\).
\fr}\end{remark}
Finally, we prove that when the scalable group $ G' $ is nilpotent, \ref{eq:Rx_non-deg_lem} and \ref{eq:inv_non-deg_lem} are equivalent.
\begin{lemma}
	\label{lem:nilpotent_inv_cauchy}
	Let \(G\) be a nilpotent group endowed with a left-invariant
	distance \(\sfd\). If $ R_x\colon G \to G $ is continuous for every $ x\in G $, then the inversion $ {\sf Inv} \colon G \to G$ is Cauchy-continuous.
\end{lemma}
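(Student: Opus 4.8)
Here is how I would go about proving Lemma~\ref{lem:nilpotent_inv_cauchy}.

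\smallskip

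\textit{Overall strategy.} Since items \ref{it:Rx_unif_cont}--\ref{it:Op_Cauchy} of Lemma~\ref{lem:cont_operations} hold without the dilation (by the remark following that lemma), the map \({\sf Inv}\) is already continuous by Lemma~\ref{lem:cont_operations}~\ref{it:inv_vs_Rx}, so it will be enough to show that \({\sf Inv}\) sends Cauchy sequences to Cauchy sequences. The plan is to reduce this to a statement about conjugation and then prove that statement by a \emph{downward} induction along the lower central series \(G=\gamma_1(G)\supseteq\gamma_2(G)\supseteq\dots\supseteq\gamma_{s+1}(G)=\{e\}\), where \(s\) is the nilpotency step. Fix a totally bounded \(T\sus G\) and, for \(k\in\{1,\dots,s+1\}\), consider the assertion
\[
Q(k):\qquad\forall\,\eps>0\ \ \exists\,\delta>0:\quad\sfd\big(xcx^{-1},e\big)<\eps\ \text{ whenever }x\in T,\ c\in\gamma_k(G),\ \sfd(c,e)<\delta.
\]
Once \(Q(1)\) is known for every totally bounded \(T\), the Lemma follows: given a Cauchy sequence \((x_n)_n\), put \(T=\{x_n:n\in\N\}\) (totally bounded, since Cauchy sequences are) and write \(x_nx_m^{-1}=x_m(x_m^{-1}x_n)x_m^{-1}\) with \(\sfd(x_m^{-1}x_n,e)=\sfd(x_n,x_m)\to0\); then \(\sfd(x_n^{-1},x_m^{-1})=\sfd(x_nx_m^{-1},e)\) is made small by \(Q(1)\) (with \(x=x_m\), \(c=x_m^{-1}x_n\)), so \((x_n^{-1})_n\) is Cauchy.

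\smallskip

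\textit{Two preliminary estimates.} Both rest only on left-invariance and the continuity of all right translations. First, exactly as in the proof of Lemma~\ref{lem:cont_operations}~\ref{it:Op_Cauchy} — covering a totally bounded set by finitely many balls and invoking Lemma~\ref{lem:cont_operations}~\ref{it:Rx_extension} — one gets \(\inf_{x\in S}\omega_{R_x}(e;\eps)>0\) for every totally bounded \(S\sus G\) and every \(\eps>0\); combining this with the identity \(\sfd([x,y],e)=\sfd(xy,yx)\) (two applications of left-invariance; here \([a,b]:=a^{-1}b^{-1}ab\)) and with \(\sfd(xy,yx)\le\sfd(y,e)+\sfd(R_x(e),R_x(y))\), one obtains the crucial \emph{non-circular} bound: for every totally bounded \(S\) and \(\eps>0\) there is \(\delta>0\) with \(\sfd([x,y],e)<\eps\) whenever \(x\in S\) and \(\sfd(y,e)<\delta\). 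The key point is that only the tame translations \(R_x\), \(x\in S\), appear here, never translations by inverses. Second, since \(R_e={\rm id}_G\), Lemma~\ref{lem:cont_operations}~\ref{it:Rx_extension} gives \(\omega_{R_c}(e;\eps)\ge\eps/3\) once \(\sfd(c,e)<\eps/3\); writing \(\sfd(cwc^{-1},e)=\sfd(R_{c^{-1}}(w),R_{c^{-1}}(e))\) and \(\sfd(xcx^{-1},e)\le\sfd(R_c([x^{-1},c^{-1}]),R_c(e))+\sfd(c,e)\), this shows that conjugation by a \emph{small} element displaces small elements only slightly.

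\smallskip

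\textit{The induction and its conclusion.} The base case \(Q(s+1)\) is trivial, since \(\gamma_{s+1}(G)=\{e\}\). For the step \(Q(k+1)\Rightarrow Q(k)\), take \(x\in T\) and small \(c\in\gamma_k(G)\) and use the commutator identities
\[
xcx^{-1}=[x^{-1},c^{-1}]\,c,\qquad [x^{-1},c^{-1}]=c\,[x^{-1},c]^{-1}c^{-1},\qquad [x^{-1},c]=x\,[x,c]^{-1}x^{-1},
\]
together with \(\sfd(w^{-1},e)=\sfd(w,e)\). Since \([x,c]\in\gamma_{k+1}(G)\) (as \(c\in\gamma_k(G)\)), the first preliminary estimate makes \([x,c]\), hence \([x,c]^{-1}\), as small as we wish; then \(Q(k+1)\) (applied with the same \(T\)) makes \([x^{-1},c]=x[x,c]^{-1}x^{-1}\) small; then conjugation by the small \(c\) makes \([x^{-1},c^{-1}]\) small; and a final conjugation by the small \(c\) in \(xcx^{-1}=[x^{-1},c^{-1}]c\) makes \(\sfd(xcx^{-1},e)<\eps\). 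Each implication loses only a controllable amount, and the chain has the fixed finite length \(s+1-k\), so chasing the constants produces the required \(\delta\). This proves \(Q(k)\) for all \(k\), and in particular \(Q(1)\); combined with the reduction above, \({\sf Inv}\) is Cauchy-continuous.

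\smallskip

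\textit{The main obstacle.} The difficulty is a genuine circularity in the naive approach: one would like to bound \(\sfd(xcx^{-1},e)=\sfd\big(R_{x^{-1}}(xc),R_{x^{-1}}(x)\big)\) with \(\sfd(xc,x)=\sfd(c,e)\) by means of a modulus of continuity for \(R_{x^{-1}}\), but any bound uniform over \(x\in T\) is, via Lemma~\ref{lem:cont_operations}~\ref{it:inv_vs_Rx}, the uniform continuity of \({\sf Inv}\) on \(T\) — precisely what is to be proved. Nilpotency is exactly what breaks the loop: it lets one trade a conjugation involving \(\gamma_k(G)\) for one involving \(\gamma_{k+1}(G)\) at the price of errors controlled solely through the tame translations \(R_x\), \(x\in T\), and after at most \(s\) such steps the conjugation has become the identity.
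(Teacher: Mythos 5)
Your argument is correct, but it takes a genuinely different route from the paper's. The paper proves the lemma by induction on the nilpotency step: it passes to the quotient \(G/G^{(s)}\) by the last term \(G^{(s)}\) of the lower central series, endowed with the left-invariant quotient distance of Montgomery--Zippin, uses the inductive hypothesis there (through Lemma \ref{lemma:Cauchy_continuity_vs_moduli}) to see that \([{\sf Inv}(T)]\) is totally bounded, and then exploits that \(G^{(s)}\) is central, so that right translation by \(h\in G^{(s)}\) is an isometry and \(\omega_{{\sf Inv}}(x;\eps)=\omega_{R_{x^{-1}}}(e;\eps)=\omega_{R_{hx^{-1}}}(e;\eps)\) with \(hx^{-1}\) lying in a fixed totally bounded set \(U\) covering \([{\sf Inv}(T)]\); uniform continuity of \({\sf Inv}\) on \(T\) then follows from Lemma \ref{lem:cont_operations} \ref{it:Rx_extension}. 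You instead never leave \(G\): you reformulate the claim as uniform smallness of the conjugates \(xcx^{-1}\) for \(x\) in a totally bounded set and \(c\) close to \(e\), and prove it by a downward induction along the lower central series, using the identities \(xcx^{-1}=[x^{-1},c^{-1}]c\), \([x^{-1},c^{-1}]=c[x^{-1},c]^{-1}c^{-1}\), \([x^{-1},c]=x[x,c]^{-1}x^{-1}\) to trade a conjugation involving \(\gamma_k(G)\) for one involving \(\gamma_{k+1}(G)\), with all errors controlled only through the translations \(R_x\), \(x\in T\), and through right translations and conjugations by small elements (your two preliminary estimates, which follow from Lemma \ref{lem:cont_operations} \ref{it:Rx_unif_cont}--\ref{it:Rx_extension}, left-invariance, and the covering argument from the proof of \ref{it:Op_Cauchy}); the identities and the quantifier bookkeeping check out, and the reduction from Cauchy sequences to the conjugation statement \(Q(1)\) is sound. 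As for what each approach buys: yours is more self-contained, since it avoids the quotient construction entirely (in the paper the quotient ``distance'' is a priori only a pseudodistance unless \(G^{(s)}\) is closed, which is harmless because the auxiliary lemmas are stated for pseudodistances, but your route sidesteps the point), at the price of a longer chain of estimates; the paper's proof is shorter and structurally cleaner, with a single covering of \([{\sf Inv}(T)]\) and the centrality of \(G^{(s)}\) replacing your constant-chasing.
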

\begin{proof}
    The proof is by induction on the nilpotency step $ s $ of $ G $. To prove the base case, assume that $ s=1 $, that is, $ G $ is abelian. Then for every $ x,y \in G $,
	\[
	\sfd(x^{-1},y^{-1}) = \sfd(e,xy^{-1}) = \sfd(y,x),
	\]
	which proves that $ {\sf Inv} $ is an isometry.
	
	Let then the nilpotency step $ s $ of $ G $ be arbitrary and assume that the claim is true for groups of nilpotency step $ s-1 $.  Denote by $ G^{(s)} $ the last element of the lower central series of $ G $, which is an abelian subgroup of $ G $. Then the quotient space $ G/G^{(s)} $ is a group of step $ s-1 $. Moreover, on $ G/G^{(s)} $ one may consider (see \cite{MZ_1955}, p.\ 36) a left-invariant distance $ \sfd' $ defined by 
	\[
	\sfd'([x],[y]) \coloneqq \inf_{h\in G^{(s)}} \sfd(x,hy)\quad
	\text{ for every }[x],[y]\in G/G^{(s)}\,.
	\]
	Let $ T\sus G $ be totally bounded and let us show that $ {\sf Inv} $ is uniformly continuous on $ T $, which would prove the claim by Lemma \ref{lemma:Cauchy_continuity_vs_moduli}. Since the projection map $ x \mapsto [x] $ is Lipschitz, the set $ [T] $ is totally bounded. Denoting by $ {\sf Inv}' $ the inversion on  $ G/G^{(s)} $, by induction assumption and Lemma \ref{lemma:Cauchy_continuity_vs_moduli} we have that $[{\sf Inv}(T)]= {\sf Inv}'([T]) $ is totally bounded as well. Fix $ \eps > 0 $ and let $ [x_1],\dots,[x_n]\in G/G^{(s)} $ be such that $ [{\sf Inv}(T)] \sus \bigcup_{i=1}^nB_{\eps/3}([x_i]) $. Then it is easy to show that, if $ U\coloneqq \bigcup_{i=1}^nB_{\eps/3}(x_i) \sus G $, then  $ [{\sf Inv}(T)] \sus [U] $.
	Since $ R_x $ is continuous for every $ x\in G $, we have
	\[
	\eta \coloneqq \inf_i \omega_{R_{x_i}}(\eps/3) > 0.
	\]
	Moreover, by Lemma \ref{lem:cont_operations} \ref{it:Rx_extension} it holds $\inf_{x\in U} \omega_{R_{x}}(\eps) \geq \eta. $
	
	Let finally $ x\in T $ and let us demonstrate that $ \omega_{{\sf Inv}}(x;\eps) \geq \eta $.	Indeed, now $ [x^{-1}]\in [{\sf Inv}(T)] \sus [U] $, which implies the existence of $ h \in G^{(s)} $ such that $ hx^{-1} \in U $. Recall that $ G^{(s)} $ is an abelian subgroup of $ G $, whence $ R_{h^{-1}} $ is an isometry. Consequently,
	\[
	\omega_{{\sf Inv}}(x;\eps)  \overset{\text{L}\ref{lem:cont_operations} \ref{it:inv_vs_Rx}}{=} \omega_{R_{x^{-1}}}(e;\eps) =  \omega_{R_{hx^{-1}}\circ R_{h^{-1}} }(e;\eps) \overset{(*)}{=} \omega_{R_{hx^{-1}}}(h^{-1};\eps) \overset{\text{L}\ref{lem:cont_operations} \ref{it:Rx_unif_cont}}{=} \omega_{R_{hx^{-1}}}(e;\eps) \geq \eta,
 	\]
 	where the identity $ (*) $ is a straightforward consequence of the fact that $ R_{h^{-1}} $ is an isometry.
\end{proof}
\begin{corollary}[Direct limits of CMSGs with bounded nilpotency step]
	\label{cor:nilpotent_DL}
Consider a direct system \(\big(\{G_i\}_{i\in I},\{\varphi_{ij}\}_{i\leq j}\big)\)
of complete metric scalable groups satisfying \ref{eq:delta_non-deg_lem} and \ref{eq:Rx_non-deg_lem}. Let us suppose that the nilpotency step of the groups $ G_i $ is uniformly bounded. Then it holds that the direct limit of \(\big(\{G_i\}_{i\in I},\{\varphi_{ij}\}_{i\leq j}\big)\) exists.
\end{corollary}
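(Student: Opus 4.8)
The plan is to reduce the statement to Theorem~\ref{thm:DL_of_CMSG}: since \ref{eq:delta_non-deg_lem} and \ref{eq:Rx_non-deg_lem} are assumed, it suffices to verify that the direct system is non-degenerate in the sense of Definition~\ref{def:non-deg_DS_CMSG}, i.e.\ that besides \ref{eq:delta_non-deg_lem} also \ref{eq:inv_non-deg_lem} holds -- the Cauchy-continuity of \({\sf Inv}\colon G'\to G'\), where \(\big(G',\{\varphi'_i\}_{i\in I}\big)\) is the direct limit in the category of scalable groups. The key observation is that a uniform bound \(s\) on the nilpotency steps of the \(G_i\) passes to \(G'\): any finitely many elements of \(G'\) lie in \(\varphi'_j(G_j)\) for a common index \(j\) (by directedness of \(I\) and the formula \eqref{eq:def_DL_gps}), so every \((s+1)\)-fold iterated commutator of elements of \(G'\) vanishes, being the \(\varphi'_j\)-image of an \((s+1)\)-fold commutator in \(G_j\). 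Hence \(G'\), and therefore the metric limit \(G'/Z(\sfd')\), is nilpotent of step at most \(s\).

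\textbf{Key steps.} First I would record the nilpotency of \(G'\) as above. Second, I would observe that, by \ref{eq:Rx_non-deg_lem} together with Lemma~\ref{lem:cont_operations}\ref{it:Rx_unif_cont}, the map \(R_x\) is continuous \emph{everywhere} on \(G'\) (not merely at \(e'\)) for each \(x\in G'\), and -- since \ref{eq:Rx_non-deg_lem} holds -- Lemma~\ref{lem:metric_limit_is_msg} guarantees that \((G'/Z(\sfd'),\delta,\sfd)\) is a scalable group carrying the compatible \emph{genuine} distance \(\sfd\), on which \(R_{[x]}\) is continuous for every \([x]\) (because the projection \(\pi\colon(G',\sfd')\to(G'/Z(\sfd'),\sfd)\) is distance-preserving, \(Z(\sfd')\) is normal, and \(R_{[x]}\circ\pi=\pi\circ R_x\)). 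Third, I would apply Lemma~\ref{lem:nilpotent_inv_cauchy} to the nilpotent group \(G'/Z(\sfd')\) with its left-invariant distance \(\sfd\) to conclude that \({\sf Inv}\) on \(G'/Z(\sfd')\) is Cauchy-continuous, and then transfer this back to \(G'\): given a Cauchy sequence \((x_n)_n\subseteq(G',\sfd')\), its projection \(([x_n])_n\) is Cauchy in the quotient, hence so is \(([x_n]^{-1})_n=([x_n^{-1}])_n\), which -- using again that \(\pi\) preserves distances -- forces \((x_n^{-1})_n\) to be Cauchy in \((G',\sfd')\); moreover \({\sf Inv}\) is continuous on \(G'\) by Lemma~\ref{lem:cont_operations}\ref{it:inv_vs_Rx} combined with \ref{eq:Rx_non-deg_lem}. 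This gives \ref{eq:inv_non-deg_lem}. Finally, with \ref{eq:delta_non-deg_lem} and \ref{eq:inv_non-deg_lem} established, the direct system is non-degenerate as a system of CMSGs, and Theorem~\ref{thm:DL_of_CMSG} yields the existence of the direct limit (which then coincides with the metric completion of the metric limit).

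\textbf{Main obstacle.} The only genuinely delicate point is that Lemma~\ref{lem:nilpotent_inv_cauchy} is phrased for a true metric, while \(\sfd'\) on \(G'\) is merely a pseudodistance; this is precisely why I would factor through the quotient \(G'/Z(\sfd')\) -- a legitimate metric space by Lemma~\ref{lem:metric_limit_is_msg} -- rather than try to invoke the lemma on \(G'\) directly. One should check in passing that the hypotheses of Lemma~\ref{lem:nilpotent_inv_cauchy} are literally met on \(G'/Z(\sfd')\) (left-invariance of \(\sfd\), which is inherited from that of \(\sfd'\); continuity of all right translations, as above) and that the transfer of Cauchy-continuity along \(\pi\) is sound, which it is because \(\pi\) is a distance-preserving group homomorphism.
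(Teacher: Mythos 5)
Your proposal is correct and takes essentially the same route as the paper: show that $G'$ is nilpotent (finitely many elements lift to a common $G_k$, so iterated commutators of length exceeding the uniform step vanish), deduce \ref{eq:inv_non-deg_lem} from \ref{eq:Rx_non-deg_lem} via Lemma \ref{lem:nilpotent_inv_cauchy}, and conclude with Theorem \ref{thm:DL_of_CMSG}. The only divergence is your detour through the quotient $G'/Z(\sfd')$ to avoid invoking Lemma \ref{lem:nilpotent_inv_cauchy} on the pseudodistance $\sfd'$; the paper applies the lemma directly to $\big(G',\sfd'\big)$ (its proof works verbatim for pseudometrics, since all the relevant notions were set up in that generality), so your extra transfer step is sound but not strictly necessary.
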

\begin{proof}
	Let $ s\in \N $ be such that every $ G_i $ is nilpotent of step less than $ s $ and fix $ x_1,\dots,x_s \in G' $. Then there exist $ k\in I $ with the following property: for every $ i= 1,\dots,s $, there exists $ y_i \in G_k $ with $ \varphi'_k(y_i) = x_i $. Since $ G_k $ is nilpotent of step less than $ s $, we have
	\[
	[x_1,\dots,[x_{s-1},x_s]\dots] = [\varphi'_k(y_1),\dots,[\varphi'_k(y_{s-1}),\varphi'_k(y_s)]\dots] =  \varphi'_k([y_1,\dots,[y_{s-1},y_s]\dots]) = e'.
	\] 
	This proves that $ G' $ is nilpotent. Given \ref{eq:Rx_non-deg_lem}, by Lemma \ref{lem:nilpotent_inv_cauchy} the condition \ref{eq:inv_non-deg_lem} holds. The claim follows then from Theorem \ref{thm:DL_of_CMSG}.
\end{proof}
	\section{Infinite-dimensional Carnot groups}
	\label{s:inf-dim_Carnots}
	
	The aim of this section is to investigate direct systems of Carnot groups and their relations to the notion of infinite-dimensional Carnot group. We recall first the relevant concepts for infinite-dimensional Carnot groups from \cite{LDLM19}. Direct limits of infinite-dimensional Carnot groups are discussed in Section \ref{sec:DL_of_infdim_Carnots}.
	
	A \emph{Carnot group} is, by definition, a connected and simply connected Lie group whose Lie algebra admits a stratification. Given a stratified Lie algebra, one naturally defines a family of dilation automorphisms on the group via the vector space scalings on the first layer of the stratification. Therefore, any Carnot group is a topological scalable group, and becomes a complete metric scalable group when equipped with a homogeneous distance.	Moreover, we say that a topological scalable group \emph{has Carnot group structure} if it is isomorphic as a topological scalable group to some Carnot group.
	
	Separable Banach spaces have the useful property of admitting
	a countable, dense collection of finite-dimensional vector subspaces. This kind of collection has a natural generalization in the Carnot setting.
	\begin{definition}[Filtration by Carnot subgroups]
		We say that a topological scalable group $G$ is {\em filtrated by Carnot subgroups}
		if there exists a sequence $(N_m)_m$, $m\in \N$, of topological scalable subgroups of $G$ such that each $N_m$ has Carnot group structure, $N_m<N_{m+1}$, and
		$G$ is the closure of $\bigcup_{m\in\N}N_m$.
		In this case, we say that
		the sequence $(N_m)_m$, $m\in \N$,   is {\em a filtration by Carnot subgroups of the topological scalable group $G$}.
	\end{definition}
	We then define a non-commutative analogue of separable Banach spaces.
	\begin{definition}[Infinite-dimensional Carnot group]
	\label{def:inf-dim_Carnot}
		A complete metric scalable group admitting a filtration by Carnot subgroups is called an \emph{infinite-dimensional Carnot group}.
	\end{definition}
	It is often convenient to use an equivalent algebraic criterion for a topological scalable group to have filtrations. Similarly to classical Carnot groups, those elements in which the dilation map is a one-parameter subgroup play a special role for the geometry.
	\begin{definition}[First layer]
		We define for a scalable group $G$ its {\em first layer} as 
		\[V_1(G) \coloneqq\big\{ x\in G\;\big|\;\delta_{t+s}(x) = \delta_t(x)\delta_s(x)\text{ for every } t,s \in \R\big\}. \]
		We say then that the map $ t\in \R \mapsto \delta_t (x)\in G$ is a \emph{one-parameter subgroup}.
	\end{definition}
	If $ G $ is a group and $ A \subset G $ is a subset, we denote by $ \langle A \rangle $ the group generated by $ A $, i.e.,  the collection of all finite products of elements of $ A $ and of their inverses.
	If $ G $ is a scalable group equipped with a topology, we denote by $ \langle A \rangle_{SC}$ the closure of the group generated by $ \{\delta_t(a) \mid a\in A, t\in \R \} $. Observe that the first layer $ V_1(G) $ of a scalable group is invariant under dilations: $ \delta_t(V_1(G)) = V_1(G) $ for all $ t\in \R\setminus \{0\} $. Therefore, $G =  \langle V_1(G) \rangle_{SC} $ if and only if $ \langle V_1(G) \rangle $ is dense in $ G $.

	It turns out that filtrations of topological scalable groups are in close connection with the generating first layer. The following result is proven in \cite[Proposition 2.1]{LDLM19}.
	\begin{proposition}[Alternative characterization of admitting a filtration]\label{prop:filtration_equiv_to_generating_V1}
		Let $G$ be a topological scalable group. Then the following are equivalent:
		\begin{itemize}
			\item[\(\rm i)\)] $ G $ admits a filtration by Carnot subgroups;
			\item[\(\rm ii)\)] there exists a countable set $ A\subset V_1(G) $ such that $A $ generates $ G $ as a topological scalable group and $ \langle \Omega \rangle$ is nilpotent for every finite subset $ \Omega \subset A $.
		\end{itemize}
	\end{proposition}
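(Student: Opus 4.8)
\noindent\emph{Proof plan for Proposition \ref{prop:filtration_equiv_to_generating_V1}.}
I would split the proof according to the two implications, the second one being the substantial one.

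\emph{From i) to ii).} Suppose $(N_m)_m$ is a filtration of $G$ by Carnot subgroups. For each $m$ the exponentials of a linear basis of the first stratum of the Lie algebra of $N_m$ form a finite subset $A_m$ of $V_1(N_m)$ that generates $N_m$ as a topological scalable group. Since $N_m$ is a topological scalable subgroup of $G$, its dilations are the restrictions of those of $G$, so $V_1(N_m)\subset V_1(G)$ and hence the countable set $A:=\bigcup_m A_m$ is contained in $V_1(G)$. From $\langle A_m\rangle_{SC}=N_m$ and $\overline{\bigcup_m N_m}=G$ one gets that $A$ generates $G$ as a topological scalable group. Finally any finite $\Omega\subset A$ lies inside $N_M$ for $M$ large enough (using $N_m<N_{m+1}$), and $N_M$ is nilpotent, being a Carnot group, so $\langle\Omega\rangle$ is nilpotent.

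\emph{From ii) to i): construction and the nilpotency estimate.} I would build the filtration by hand. Enumerate $A=(a_k)_{k\in\N}$, put $\Gamma_m:=\langle a_1,\dots,a_m\rangle$ (nilpotent of some step $s_m$ by hypothesis), and set
\[
N_m:=\overline{\big\langle \delta_t(a_k)\ :\ k\leq m,\ t\in\R\big\rangle}=\overline{\langle\{a_1,\dots,a_m\}\rangle_{SC}}.
\]
The key point is that $N_m$ is nilpotent of step $\leq s_m$. I would first treat the ``rational'' subgroup $\Lambda_m:=\langle\delta_q(a_k):k\leq m,\ q\in\mathbb{Q}\rangle$: since $a_k\in V_1(G)$ one has $\delta_{p/n}(a_k)=\delta_{1/n}(a_k)^p$, so any finitely many elements of $\Lambda_m$ already belong to $\delta_{1/N}(\Gamma_m)$ for a suitable common denominator $N$, and $\delta_{1/N}(\Gamma_m)$ is nilpotent of step $s_m$ because $\delta_{1/N}$ is an automorphism of $G$. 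Thus every finitely generated subgroup of $\Lambda_m$ is nilpotent of step $\leq s_m$; since membership of an element in $\gamma_{s_m+1}(\Lambda_m)$ involves only finitely many elements, $\Lambda_m$ itself is nilpotent of step $\leq s_m$. Continuity of the commutator map together with the Hausdorff property transfers this to $\overline{\Lambda_m}$, and continuity of $\delta$ gives $\delta_t(a_k)=\lim_{q\to t}\delta_q(a_k)\in\overline{\Lambda_m}$, so in fact $N_m=\overline{\Lambda_m}$ is nilpotent of step $\leq s_m$. It is also a closed $\delta$-invariant, hence topological scalable, subgroup of $G$ containing $a_1,\dots,a_m$.

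\emph{From ii) to i): Carnot structure and assembly.} Let $F_m$ be the free nilpotent Carnot group of step $s_m$ on $m$ generators, with first-layer one-parameter subgroups $\gamma_1,\dots,\gamma_m$. Since $\langle\delta_t(a_k):k\leq m,\ t\in\R\rangle$ is nilpotent of step $\leq s_m$ and each $t\mapsto\delta_t(a_k)$ is a one-parameter subgroup lying in $V_1(G)$, the universal property of the free nilpotent group yields a scalable-group morphism from the dense subgroup of $F_m$ generated by $\gamma_1(\R),\dots,\gamma_m(\R)$ into $G$, sending $\gamma_k(t)\mapsto\delta_t(a_k)$; extending it by continuity (by Proposition \ref{prop:morph_cont_vs_Lip} and Theorem \ref{thm:extension_to_completion} in the presence of a compatible metric) gives a continuous morphism of scalable groups $\Psi_m\colon F_m\to G$ with image dense in $N_m$. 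Since $\Psi_m$ commutes with dilations, $\ker\Psi_m$ is a closed homogeneous normal subgroup of $F_m$, so $\Psi_m(F_m)\cong F_m/\ker\Psi_m$ is a Carnot group; checking that $\Psi_m(F_m)$ is closed in $G$ (equivalently, is all of $N_m$, equivalently $N_m$ is finite-dimensional) identifies $N_m$ with a Carnot group. Finally $N_m<N_{m+1}$ by construction, and $\bigcup_m N_m\supset\langle\delta_t(a):a\in A,\ t\in\R\rangle$ has closure $\langle A\rangle_{SC}=G$ by hypothesis, so $(N_m)_m$ is a filtration of $G$ by Carnot subgroups.

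\emph{Main obstacle.} The conceptual core is the nilpotency estimate of the second step: the implication ``every finitely generated subgroup is nilpotent of bounded step $\Rightarrow$ the whole group is nilpotent of that step'', fed by the identity $\delta_{p/n}(a_k)=\delta_{1/n}(a_k)^p$ that holds precisely because $a_k\in V_1(G)$. The remaining delicate point is the third step, namely promoting the algebraic morphism on the dense one-parameter-subgroup part of $F_m$ to a genuine continuous morphism $\Psi_m$ on all of $F_m$ and checking that $N_m$ is exactly its image (in particular finite-dimensional); this is where the topological scalable group structure of $G$ (a compatible complete distance, in the applications) must be used.
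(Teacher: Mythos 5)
First, a point of comparison: the paper does not prove this proposition at all -- it is quoted from \cite{LDLM19} (Proposition 2.1) -- so your argument can only be judged on its own merits. Your direction i)\(\Rightarrow\)ii) is fine, and so is the first half of ii)\(\Rightarrow\)i): the common-denominator trick \(\delta_{p/n}(a_k)=\delta_{1/n}(a_k)^p\), the locality of class-\(\leq s\) nilpotency, and the passage to the closure via continuity of iterated commutators are all correct, so \(N_m\) is indeed a closed, dilation-invariant, nilpotent subgroup of step \(\leq s_m\).

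The genuine gap is in the third step. There is no ``universal property of the free nilpotent group'' that produces a morphism \(\Psi_m\colon F_m\to G\) with \(\gamma_k(t)\mapsto\delta_t(a_k)\): the free object on \(m\) one-parameter subgroups in the variety of class-\(\leq s\) nilpotent groups is the class-\(s\) nilpotent free product of \(m\) copies of \((\R,+)\), which is strictly larger than the Carnot group \(F_m\). Already for \(m=s=2\) that free product has derived subgroup \(\R\otimes_{\mathbb Z}\R\), whereas the Heisenberg group has derived subgroup \(\R\); concretely, the relation \([\gamma_1(1),\gamma_2(\sqrt2)]=[\gamma_1(\sqrt2),\gamma_2(1)]\) holds in \(F_2\) but is not a formal consequence of ``each \(t\mapsto\delta_t(a_k)\) is a one-parameter subgroup and the group they generate is nilpotent of class \(2\)'', so well-definedness of \(\Psi_m\) is exactly the nontrivial content you are trying to prove, not something you may assume. (Note also that the subgroup of \(F_m\) generated by \(\gamma_1(\R),\dots,\gamma_m(\R)\) is all of \(F_m\), not merely dense, so there is no ``algebraic morphism on a dense subgroup'' to extend; and in any case Proposition \ref{prop:morph_cont_vs_Lip} and Theorem \ref{thm:extension_to_completion} require a compatible metric, completeness of the target and continuity of the map at \(e\), none of which is available for a bare topological scalable group and none of which you establish.) Finally, the point you defer -- that the image is closed, equivalently that \(N_m\) is finite-dimensional -- is precisely the heart of the implication: a nilpotent group generated by finitely many one-parameter subgroups need not be a Lie group (the free nilpotent product above is a counterexample), so finite-dimensionality must be extracted from the scalable-group structure, e.g.\ via torsion-freeness/divisibility of \(\langle\Omega\rangle\) and Malcev-completion arguments as in \cite{LDLM19}, and also the homeomorphism of \(F_m/\ker\Psi_m\) onto \(N_m\) with its subspace topology would still need an argument. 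As it stands, the implication ii)\(\Rightarrow\)i) is not proved.
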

	We stress that if a topological scalable group has generating first layer, then having elements in which the dilation map fails to be a one-parameter subgroup is the characterizing feature of non-commutativity. We formulate this observation in the following proposition.
	\begin{proposition}\label{prop:V1(G)=G_equiv_Banach}
		Let $ (G, \delta, \sfd) $ be a complete metric scalable group such that $ \langle V_1(G) \rangle_{SC} = G$. Then the following are equivalent:
		\begin{enumerate}[label=\rm\roman{*})]
			\item \label{it:V1} $ V_1(G) = G $;
			\item \label{it:abelian} $ G $ is abelian;
			\item \label{it:banach} $ G $ is a Banach space.
		\end{enumerate}
	\end{proposition}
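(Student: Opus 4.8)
The plan is to prove the chain of implications \ref{it:banach} $\Rightarrow$ \ref{it:abelian} $\Rightarrow$ \ref{it:V1} $\Rightarrow$ \ref{it:banach}. The implication \ref{it:banach} $\Rightarrow$ \ref{it:abelian} is immediate, since a Banach space (with its standard dilations $\delta_\lambda x=\lambda x$) is an abelian group. I expect the hypothesis $\langle V_1(G)\rangle_{SC}=G$ to be needed only for the step \ref{it:abelian} $\Rightarrow$ \ref{it:V1}; the other two implications hold for any CMSG.

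For \ref{it:abelian} $\Rightarrow$ \ref{it:V1}, I would first recall, from the remark preceding Proposition~\ref{prop:filtration_equiv_to_generating_V1}, that $\langle V_1(G)\rangle_{SC}=G$ amounts to $\langle V_1(G)\rangle$ being dense in $G$. The strategy is then to show that, when $G$ is abelian, $V_1(G)$ is already a closed subgroup that coincides with $\langle V_1(G)\rangle$, so that its density forces $V_1(G)=G$. For the subgroup/coincidence part: since $v^{-1}=\delta_{-1}(v)\in V_1(G)$ whenever $v\in V_1(G)$ (dilation-invariance of the first layer together with $\delta_0(v)=\delta_1(v)\delta_{-1}(v)$), every element of $\langle V_1(G)\rangle$ is a finite product $v_1\cdots v_n$ of first-layer elements; using that each $\delta_\lambda$ is an endomorphism, that each $v_k$ satisfies the one-parameter identity, and commutativity to reorder the factors, one checks $\delta_{t+s}(v_1\cdots v_n)=\delta_t(v_1\cdots v_n)\,\delta_s(v_1\cdots v_n)$, so $v_1\cdots v_n\in V_1(G)$. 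For closedness: write $V_1(G)=\bigcap_{t,s\in\R}\{x\in G:\delta_{t+s}(x)=\delta_t(x)\delta_s(x)\}$ and note that each set in the intersection is closed, being the preimage of the diagonal of $G\times G$ under the continuous map $x\mapsto\big(\delta_{t+s}(x),\delta_t(x)\delta_s(x)\big)$ (the dilation and the group multiplication are continuous on a CMSG, and $G$ is Hausdorff).

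For \ref{it:V1} $\Rightarrow$ \ref{it:banach}, I would first deduce that $G$ is abelian: for $x,y\in G=V_1(G)$ compute $\delta_2(xy)$ in two ways, namely as $\delta_2(x)\delta_2(y)=x^2y^2$ (using that $\delta_2$ is an endomorphism and the one-parameter identity) and as $(xy)(xy)=xyxy$ (using $xy\in V_1(G)$), and cancel to get $xy=yx$. Then equip $G$, written additively with $0=e$, with the scalar multiplication $\lambda\cdot x:=\delta_\lambda(x)$; the vector space axioms are a direct transcription of the defining properties of the family of dilations together with \ref{it:V1}, the crucial point being that distributivity over scalar addition $(\lambda+\mu)\cdot x=\delta_{\lambda+\mu}(x)=\delta_\lambda(x)\delta_\mu(x)=\lambda\cdot x+\mu\cdot x$ is literally the first-layer identity, distributivity over vector addition is the homomorphism property of $\delta_\lambda$, and $\delta_\lambda\circ\delta_\mu=\delta_{\lambda\mu}$ with $\delta_1=\mathrm{id}$ give the rest. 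Setting $\|x\|:=\sfd(x,e)$, homogeneity $\|\lambda\cdot x\|=|\lambda|\,\|x\|$ is the compatibility of $\sfd$, the triangle inequality follows from left-invariance ($\sfd(xy,e)\le\sfd(xy,x)+\sfd(x,e)=\sfd(y,e)+\sfd(x,e)$), and $\sfd(x,y)=\|x-y\|$ follows from left-invariance plus commutativity (and $\delta_{-1}(y)=y^{-1}$); since $(G,\sfd)$ is complete, $(G,\|\cdot\|)$ is a Banach space whose norm topology is the topology of $G$.

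The main obstacle is \ref{it:abelian} $\Rightarrow$ \ref{it:V1}: this is the only place where the generation hypothesis, commutativity and the topology genuinely interact, and the two delicate points are that commutativity collapses $\langle V_1(G)\rangle$ onto $V_1(G)$ and that $V_1(G)$ is closed. The implication \ref{it:banach} $\Rightarrow$ \ref{it:abelian} and the Banach-structure verification in \ref{it:V1} $\Rightarrow$ \ref{it:banach} are routine bookkeeping with the axioms.
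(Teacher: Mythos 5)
Your proposal is correct and follows essentially the same route as the paper: the product computation $\delta_{t+s}(v_1\cdots v_n)=\delta_t(v_1\cdots v_n)\delta_s(v_1\cdots v_n)$ combined with density of $\langle V_1(G)\rangle$ (your closedness of $V_1(G)$ is just the paper's ``continuity of $\delta$ plus density'' argument repackaged), a small dilation identity to extract commutativity from $V_1(G)=G$ (you use $\delta_2$ where the paper uses $\delta_{-1}(x)=x^{-1}$), and the same verification that $\delta_\lambda$ gives a vector space structure normed by $\|x\|=\sfd(x,e)$, complete because $\sfd$ is. The only differences are the ordering of the implications and these cosmetic choices, so nothing of substance is gained or lost.
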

	\begin{proof}
		We show first that \ref{it:V1} and \ref{it:abelian} are equivalent. We start by proving the following claim:
		\begin{equation}\label{eq:V1_inverse_is_dilation}
		\text{If } x \in V_1(G), \text{ then }\delta_{-1}(x) =x^{-1}.
		\end{equation}
		The proof is a simple computation:
		\[
		x\,\delta_{-1}(x) = \delta_1(x)\delta_{-1}(x) = \delta_{1-1}(x) = \delta_0(x) = e.
		\]
		Suppose now that \ref{it:V1} holds and take $ x,y\in G $. Then $ xy\in V_1(G) $ and \ref{it:abelian} follows from \eqref{eq:V1_inverse_is_dilation} as
		\[
		[x,y] = xyx^{-1}y^{-1}=xy\delta_{-1}(xy) = xy(xy)^{-1}=e.
		\]
		Assume then that \ref{it:abelian} holds and let $ x = x_1\dots x_n $ where each $ x_i \in V_1(G)$. Then for every $ t,s\in \R $,
		\begin{align*}
		\delta_t(x)\delta_s(x) &= \delta_t(x_1)\dots\delta_t(x_n)\delta_s(x_1)\dots\delta_s(x_n)=\delta_t(x_1)\delta_s(x_1)\dots\delta_t(x_n)\delta_s(x_n) \\
		&= \delta_{t+s}(x_1)\dots \delta_{t+s}(x_n) = \delta_{t+s}(x).
		\end{align*}
		Since $ \delta $ is continuous and $ \langle V_1(G) \rangle $ is dense in $ G $, we obtain \ref{it:V1}.
		
		The fact that \ref{it:banach} implies \ref{it:V1} and \ref{it:abelian} is immediate. Regarding the opposite direction, recall that a real topological vector space is an abelian topological group $ (V,+) $ equipped with a continuous scalar multiplication $\R\times V\ni(t,v) \mapsto tv\in V $ satisfying:
		\begin{itemize}
			\item[1.] $ 1v = v $;
			\item[2.] $ t(sv) = (ts)v $;
			\item[3.] $ tu + tv = t(u+v) $ and
			\item[4.] $ tv + sv = (t+s)v $.
		\end{itemize}
		Assuming \ref{it:V1} and \ref{it:abelian}, it is easy to check that $ (G,\delta) $ is a topological vector space with the scalar multiplication $ (t,x) \mapsto \delta_t(x) $. Moreover, letting $ \|x\|\coloneqq \sfd(e,x) $ defines a norm on $ G $. As we assumed $ G $ to be complete with respect to $ \sfd $, we conclude that $\big(G, \delta, \|\cdot\|\big) $ is a Banach space.
	\end{proof}
	Notice that in Proposition \ref{prop:V1(G)=G_equiv_Banach} the assumption of having generating first layer cannot be removed. Indeed, consider $ G = (\R,+) $ with the dilations given by $ \delta_\lambda(r) = \lambda^2 r $ and the distance $ \sfd = |\cdot|^{\frac{1}{2}} $. Then $ (G,\delta,\sfd) $ is an abelian complete metric scalable group with $ V_1(G)= \{0\} $. The group $ G $ can even be made geodesic, see \cite[p.\ 21]{M20}.

	\subsection{Direct limits of Carnot groups}
	The following proposition shows that any complete metric scalable group admitting a filtration is obtained as a direct limit of Carnot groups. In this simple case a direct proof of the universal property is straightforward. We nevertheless prove Proposition \ref{prop:infdim_Carnot_is_DL} by applying the more general Theorem \ref{thm:DL_of_CMSG} for the sake of an example.
	\begin{proposition}[Infinite-dimensional Carnot groups as direct limits]
	\label{prop:infdim_Carnot_is_DL}
		Let \((G,\delta,\sfd)\) be a complete metric scalable group admitting a
		filtration \((N_m)_m\) by Carnot groups. For any \(m,n\in\N\) with \(m\leq n\),
		we denote by \(\iota_{mn}\colon N_m\hookrightarrow N_n\) and
		\(\iota_m\colon N_m\hookrightarrow G\) the inclusion maps. Then
it holds that the direct system of complete metric scalable groups
\(\big(\{N_m\}_{m\in\N},\{\iota_{mn}\}_{m\leq n}\big)\) is non-degenerate 		
		and its direct limit is given by \(\big(G,\{\iota_m\}_{m\in\N}\big)\).
	\end{proposition}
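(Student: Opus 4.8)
The plan is to deduce the statement from Theorem \ref{thm:DL_of_CMSG}, so the first task is to make the general construction explicit in this concrete setting, where each $N_m$ is understood to carry the restriction $\sfd|_{N_m}$; this is a compatible distance inducing the subspace topology, and it makes $N_m$ into a CMSG since $N_m$, being locally compact, is closed in $G$. Because every $\iota_{mn}$ is injective, the equivalence relation defining the scalable direct limit collapses: for $x\in N_m$ and $y\in N_n$ one has $x\sim y$ exactly when $x=y$ inside $G$ (take $k\geq m,n$ and use $\iota_{mk}(x)=x$, $\iota_{nk}(y)=y$ in $N_k\sus G$). Hence the direct limit in the category of scalable groups is $(G',\delta')$ with $G'=\bigcup_{m\in\N}N_m\sus G$, $\delta'=\delta|_{G'}$, and $\varphi'_m\colon N_m\hookrightarrow G'$ the inclusions. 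Since the $\varphi'_m$ are isometric embeddings and the $N_m$ are increasing, the infimum-pseudodistance \eqref{eq:inf-pseudodist} reduces to $\sfd'=\sfd|_{G'}$, which is an honest distance; in particular $Z(\sfd')=\{e'\}$, so the metric limit is just $(G',\delta',\sfd|_{G'})$.

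Next I would verify that the system is non-degenerate in the sense of Definition \ref{def:non-deg_DS_CMSG}, i.e.\ that \ref{eq:delta_non-deg_lem} and \ref{eq:inv_non-deg_lem} hold. Condition \ref{eq:delta_non-deg_lem} is immediate: as $(G,\delta,\sfd)$ is a topological scalable group, $\delta(\cdot,x)\colon\R\to G$ is continuous for every $x$, and $\delta'(\cdot,x)$ is simply its restriction taking values in $G'\sus G$. For \ref{eq:inv_non-deg_lem}, I would first note that ${\sf Inv}\colon G\to G$ is Cauchy-continuous, being continuous on the complete space $G$ (a $\sfd$-Cauchy sequence converges, hence so does its image, hence the image is Cauchy). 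Then, if $(x_n)_n\sus G'$ is $\sfd'$-Cauchy, it is $\sfd$-Cauchy, so $(x_n^{-1})_n$ is $\sfd$-Cauchy; moreover $x_n^{-1}\in N_m$ whenever $x_n\in N_m$ (each $N_m$ is a subgroup), so $(x_n^{-1})_n\sus G'$ and it is $\sfd'$-Cauchy because $\sfd'=\sfd|_{G'}$. Hence ${\sf Inv}\colon G'\to G'$ is Cauchy-continuous, and the direct system is non-degenerate.

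Finally, it remains to identify the completion. Theorem \ref{thm:DL_of_CMSG} yields that the direct limit exists and equals the completion of the metric limit $(G',\delta',\sfd|_{G'})$ as a metric scalable group. Now $\bigcup_{m\in\N}N_m=G'$ is dense in $G$ by the defining property of a filtration and $(G,\sfd)$ is complete, so the inclusion $G'\hookrightarrow G$ exhibits $(G,\sfd)$ as the metric completion of $(G',\sfd|_{G'})$; this inclusion is moreover a morphism of scalable groups and $G$ is a CMSG, so by the uniqueness in Lemma \ref{lem:suff_cond_Cauchy_op} the completion of $G'$ as a metric scalable group is precisely $(G,\delta,\sfd)$. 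Chasing through the identifications, the structure maps of the direct limit are the composites of the $\varphi'_m$ with $G'\hookrightarrow G$, i.e.\ the inclusions $\iota_m\colon N_m\hookrightarrow G$, so the direct limit is $\big(G,\{\iota_m\}_{m\in\N}\big)$, as claimed. The one genuinely delicate point is the verification of \ref{eq:inv_non-deg_lem}: that ${\sf Inv}$ is Cauchy-continuous on $G'$ — rather than merely continuous — hinges on the completeness of the ambient group $G$ together with the fact that inversion leaves each $N_m$ invariant, so that $\sfd'$-Cauchy sequences are sent to $\sfd'$-Cauchy sequences; the remaining steps are routine bookkeeping.
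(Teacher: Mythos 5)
Your proof is correct and follows essentially the same route as the paper: identify the scalable-group direct limit with the union $G'=\bigcup_m N_m$ carrying the restricted distance, observe $Z(\sfd')=\{e\}$, verify \ref{eq:delta_non-deg_lem} and \ref{eq:inv_non-deg_lem} using the completeness of the ambient group $G$, and then apply Theorem \ref{thm:DL_of_CMSG} together with the density of $G'$ in $G$ to identify the completion of the metric limit with $(G,\delta,\sfd)$. Your additional checks (completeness of each $N_m$ via closedness, and the careful verification that inversion sends $\sfd'$-Cauchy sequences in $G'$ to $\sfd'$-Cauchy sequences) are just more explicit versions of steps the paper treats briefly.
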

	\begin{proof}
		First of all, \(\iota_{mn}\) and \(\iota_m\) are morphisms of
		metric scalable groups satisfying \(\iota_m=\iota_n\circ\iota_{mn}\) for
		all \(m,n\in\N\) with \(m\leq n\), thus \(\big(G,\{\iota_m\}_{m\in\N}\big)\)
		is a target of \(\big(\{N_m\}_{m\in\N},\{\iota_{mn}\}_{m\leq n}\big)\). 
		Now the direct limit of \(\big(\{N_m\}_{m\in\N},\{\iota_{mn}\}_{m\leq n}\big)\) in the category of scalable groups is the union
	 	\(G'=\bigcup_{m\in\N}N_m \subset G\). Moreover, we have $ Z(\sfd')=\{e\} $ (recall \eqref{eq:inf-pseudodist-zeroset} for the definition), so the metric limit of \(\big(\{N_m\}_{m\in\N},\{\iota_{mn}\}_{m\leq n}\big)\) equals $ (G', \delta, \sfd) $.
	 	
	 	Observe that, since $ G $ is complete, the dilation and the inversion map are Cauchy-continuous on $ G $ and, therefore, on $ G' $.
	 	This shows that the direct system of CMSGs
	 	$\big(\{N_m\}_{m\in\N},\{\iota_{mn}\}_{m\leq n}\big)$ is non-degenerate. Hence, by Theorem \ref{thm:DL_of_CMSG}, the direct limit of  
	 	$\big(\{N_m\}_{m\in\N},\{\iota_{mn}\}_{m\leq n}\big)$
	 	exists in the category of complete metric scalable groups, and it equals the metric completion of $ G' $. Since $ G' $ is dense in \(G\) by assumption, we conclude that \(\big(G,\{\iota_m\}_{m\in\N}\big)\) is the direct limit of $\big(\{N_m\}_{m\in\N},\{\iota_{mn}\}_{m\leq n}\big)$ in the category of complete metric scalable groups, as claimed.
	\end{proof}

	 \label{sec:DL_of_infdim_Carnots}
	Our next aim is to study in which circumstances a direct limit of (infinite-dimensional) Carnot groups exists in the category of CMSGs and when the limit is an infinite-dimensional Carnot group. Because of the generating first layer, the condition \ref{eq:delta_non-deg_lem} on the continuity of the dilation is automatically satisfied, as we will show next.
	\begin{lemma}\label{lemma:generating_V1_implies_delta_Cauchy}
		Let $ (G,\delta) $ be a scalable group equipped with a compatible distance $ \sfd $. If $ R_x \colon G \to G$ is continuous for every $ x\in G $ and $G = {\langle V_1(G)\rangle}_{SC} $, then $ \delta\colon\R\times G\to G$ is Cauchy-continuous.
	\end{lemma}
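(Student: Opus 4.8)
The plan is to reduce to Lemma \ref{lem:cont_operations} \ref{it:delta_Cauchy}: since every right translation is continuous, in particular at $e$, it is enough to show that $\delta(\cdot,x)\colon\R\to G$ is continuous for each $x\in G$, and the Cauchy-continuity of $\delta$ on $\R\times G$ then comes for free. Note that $V_1(G)$ is dilation-invariant, so the hypothesis $G={\langle V_1(G)\rangle}_{SC}$ says exactly that the abstract subgroup $\langle V_1(G)\rangle$ is dense in $G$; moreover $x^{-1}=\delta_{-1}(x)\in V_1(G)$ whenever $x\in V_1(G)$ (cf.\ \eqref{eq:V1_inverse_is_dilation}), so every element of $\langle V_1(G)\rangle$ is a finite product $x_1\cdots x_n$ of elements of $V_1(G)$. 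I would first prove continuity of $\delta(\cdot,x)$ for such $x$, and then pass to the closure.

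For $x\in V_1(G)$ the curve $t\mapsto\delta_t(x)$ is a one-parameter subgroup, so $\delta_s(x)^{-1}=\delta_{-s}(x)$ and $\delta_{-s}(x)\delta_t(x)=\delta_{t-s}(x)$; using left-invariance and the scaling property of $\sfd$ this gives $\sfd\big(\delta_t(x),\delta_s(x)\big)=\sfd\big(\delta_{t-s}(x),e\big)=|t-s|\,\sfd(x,e)$, so $\delta(\cdot,x)$ is $\sfd(x,e)$-Lipschitz. For a general $x=x_1\cdots x_n$ with $x_i\in V_1(G)$ we have $\delta_t(x)=\delta_t(x_1)\cdots\delta_t(x_n)$ since each $\delta_t$ is an endomorphism; the standing assumption on right translations makes ${\sf Op}\colon G\times G\to G$ continuous by Lemma \ref{lem:cont_operations} \ref{it:Op_Cauchy}, so this finite product of continuous curves is again continuous. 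Hence $\delta(\cdot,x)$ is continuous for every $x\in\langle V_1(G)\rangle$.

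Finally, for arbitrary $x\in G$ choose $y_k\in\langle V_1(G)\rangle$ with $y_k\to x$. For each $M>0$ the scaling property of $\sfd$ gives
\[
\sup_{|t|\le M}\sfd\big(\delta_t(y_k),\delta_t(x)\big)=\sup_{|t|\le M}|t|\,\sfd(y_k,x)\le M\,\sfd(y_k,x)\longrightarrow 0,
\]
so $\delta(\cdot,y_k)\to\delta(\cdot,x)$ uniformly on compact subsets of $\R$; a locally uniform limit of continuous maps is continuous, so $\delta(\cdot,x)$ is continuous. As this holds for all $x\in G$, Lemma \ref{lem:cont_operations} \ref{it:delta_Cauchy} gives that $\delta\colon\R\times G\to G$ is Cauchy-continuous, which is the claim.

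The one step that needs care is the middle one: one cannot take limits inside the product $\delta_t(x_1)\cdots\delta_t(x_n)$ for free, and the continuity of multiplication furnished by Lemma \ref{lem:cont_operations} \ref{it:Op_Cauchy} (equivalently, a telescoping estimate that replaces one factor $\delta_t(x_i)$ by $\delta_s(x_i)$ at a time, cancels the common left factors, and uses continuity of a single right translation together with the previous Lipschitz bound) is exactly what makes it go through; the remaining steps are routine manipulations with the compatibility axioms of $\sfd$.
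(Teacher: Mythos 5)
Your proposal is correct and follows essentially the same route as the paper: show $\delta(\cdot,x)$ is Lipschitz (in fact an isometric embedding up to the factor $\sfd(x,e)$) for $x\in V_1(G)$, propagate continuity to finite products using the continuity of right translations (the paper does this by the same telescoping estimate you describe as the alternative to invoking Lemma \ref{lem:cont_operations} \ref{it:Op_Cauchy}), use that inverses of first-layer elements lie in $V_1(G)$ so that $\langle V_1(G)\rangle$ consists of such products, and then pass to all of $G$ by density and locally uniform convergence, concluding via Lemma \ref{lem:cont_operations} \ref{it:delta_Cauchy}.
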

	\begin{proof}
Call \(S\) the set of \(x\in G\) such that \(\R\ni t\mapsto\delta(t,x)\in G\)
is continuous. By Lemma \ref{lem:cont_operations} \ref{it:delta_Cauchy}, in
order to prove the statement it suffices to show that \(S=G\).
Note first that if $ x \in V_1(G) $, then
		\[
		\sfd(\delta_t(x),\delta_s(x)) = \sfd(e,\delta_{s-t}(x)) = |s-t|\sfd(e,x)
		\]
		and the map $ \delta(\cdot,x) $ is an isometric embedding of $ \R $ into $ G $, so that \(V_1(G)\subseteq S\). Let then $ x,y\in S $.
Since the right translation by $ \delta_t(y) $ is continuous, we have that
		\begin{align*}
		\sfd\big(\delta_s(xy),\delta_t(xy)\big) &\leq \sfd\big(\delta_s(x)\delta_s(y),\delta_s(x)\delta_t(y)\big)+\sfd\big(\delta_s(x)\delta_t(y),\delta_t(x)\delta_t(y)\big) \\
		&= \sfd\big(\delta_s(y),\delta_t(y)\big)+\sfd\big(\delta_s(x)\delta_t(y),\delta_t(x)\delta_t(y)\big) \to 0 \quad\text{ when } s\to t,
		\end{align*}
which proves that $ \delta(\cdot,xy) $ is continuous and thus \(xy\in S\).
A direct computation yields \(x^{-1}\in V_1(G)\) for every
\(x\in V_1(G)\), which shows that \(\langle V_1(G)\rangle\) is the set of
finite products of elements of \(V_1(G)\), whence it follows that $\langle V_1(G)\rangle\subseteq S$.
Finally, we want to exploit the density of \(\langle V_1(G)\rangle\) in \(G\)
to conclude that \(S=G\). To do so, observe that for any \(x,y\in G\) and
\(a\in(0,+\infty)\) it holds that
\[
\sup_{t\in[-a,a]}\sfd\big(\delta_t(x),\delta_t(y)\big)=
\sup_{t\in[-a,a]}|t|\,\sfd(x,y)\leq a\,\sfd(x,y).
\]
Therefore, given \(x\in G\) and \((x_n)_n\subseteq\langle V_1(G)\rangle\)
with \(\lim_n\sfd(x_n,x)=0\), we have that
\(\delta(\cdot,x_n)\to\delta(\cdot,x)\) uniformly on compact sets.
Being \(\delta(\cdot,x_n)\) continuous for every \(n\in\N\), we infer
that \(\delta(\cdot,x)\) is continuous as well, thus proving that
\(x\in S\). Consequently, the statement is achieved.
	\end{proof}
	The proof of Theorem \ref{thm:DL_of_infdim_Carnots} is obtained as a corollary of the following result.
	\begin{theorem}[Direct limits of infinite-dimensional Carnot groups]
		\label{thm:DL_of_infdim_Carnots_proof}
		If \(\big(\{G_i\}_{i\in I},\{\varphi_{ij}\}_{i\leq j}\big)\) is a direct system of infinite-dimensional Carnot groups in the category of CMSGs satisfying \ref{eq:inv_non-deg_lem}, then it has a direct limit. If, in addition, $ I $ is countable and each $ G_i $ is nilpotent, then the limit is an infinite-dimensional Carnot group.
	\end{theorem}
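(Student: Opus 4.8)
The plan is to deduce the first assertion from Theorem \ref{thm:DL_of_CMSG} by checking that the direct system is non-degenerate in the sense of Definition \ref{def:non-deg_DS_CMSG}. Condition \ref{eq:inv_non-deg_lem} is assumed, so only \ref{eq:delta_non-deg_lem} is at stake. First, \ref{eq:inv_non-deg_lem} implies in particular that ${\sf Inv}\colon G'\to G'$ is continuous, hence by items \ref{it:inv_vs_Rx} and \ref{it:Rx_unif_cont} of Lemma \ref{lem:cont_operations} the map $R_x\colon G'\to G'$ is continuous for every $x\in G'$; equivalently, \ref{eq:Rx_non-deg_lem} holds. Next I claim that $G'=\langle V_1(G')\rangle_{SC}$. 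Indeed, each $G_i$ is an infinite-dimensional Carnot group, so (via Proposition \ref{prop:filtration_equiv_to_generating_V1}) the subgroup $\langle V_1(G_i)\rangle$ is $\sfd_i$-dense in $G_i$; since the morphism $\varphi'_i$ of scalable groups maps $V_1(G_i)$ into $V_1(G')$ and is $1$-Lipschitz from $\sfd_i$ to $\sfd'$, and since $G'=\bigcup_{i\in I}\varphi'_i(G_i)$, it follows that $\langle V_1(G')\rangle$ is $\sfd'$-dense in $G'$. Now Lemma \ref{lemma:generating_V1_implies_delta_Cauchy}, applied to $(G',\delta')$ with the compatible pseudodistance $\sfd'$ (or, if one prefers an honest distance, to the quotient metric scalable group $(G'/Z(\sfd'),\delta,\sfd)$, recalling the equivalence in the proof of Lemma \ref{lem:metric_limit_is_msg}), shows that $\delta\colon\R\times G'\to G'$ is Cauchy-continuous, so \ref{eq:delta_non-deg_lem} holds as well. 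Hence the system is non-degenerate, and Theorem \ref{thm:DL_of_CMSG} yields the direct limit $G$, which is the metric completion of the metric limit $G'/Z(\sfd')$.

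For the second assertion, suppose in addition that $I$ is countable and each $G_i$ is nilpotent. By Definition \ref{def:inf-dim_Carnot}, and since $G$ is a CMSG, it is enough to exhibit a filtration by Carnot subgroups of $G$; by Proposition \ref{prop:filtration_equiv_to_generating_V1} it suffices to find a countable set $A\subseteq V_1(G)$ that generates $G$ as a topological scalable group and such that $\langle\Omega\rangle$ is nilpotent for every finite $\Omega\subseteq A$. Let $\varphi_i\colon G_i\to G$ denote the canonical morphisms. For each $i\in I$, Proposition \ref{prop:filtration_equiv_to_generating_V1} applied to the infinite-dimensional Carnot group $G_i$ provides a countable set $A_i\subseteq V_1(G_i)$ generating $G_i$ as a topological scalable group; put $A\coloneqq\bigcup_{i\in I}\varphi_i(A_i)$. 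Then $A$ is countable (a countable union, over the countable set $I$, of countable sets) and $A\subseteq V_1(G)$ (each $\varphi_i$ is a morphism of scalable groups, hence maps $V_1(G_i)$ into $V_1(G)$). Moreover, since $\varphi_i$ is continuous and satisfies $\varphi_i\circ\delta^i_t=\delta_t\circ\varphi_i$, we get $\varphi_i(G_i)=\varphi_i\big(\langle A_i\rangle_{SC}\big)\subseteq\langle A\rangle_{SC}$; as $\bigcup_{i\in I}\varphi_i(G_i)$ is dense in $G$ (Remark \ref{rmk:density_claim}) and $\langle A\rangle_{SC}$ is closed, this forces $\langle A\rangle_{SC}=G$.

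Finally I would check the nilpotency of $\langle\Omega\rangle$ for finite $\Omega=\{b_1,\dots,b_m\}\subseteq A$ — the one step genuinely using the nilpotency hypothesis. Write $b_k=\varphi_{i_k}(a_k)$ with $a_k\in A_{i_k}$, pick by directedness of $I$ an index $j$ with $i_k\le j$ for all $k$, and set $c_k\coloneqq\varphi_{i_kj}(a_k)\in G_j$. Then $\langle\Omega\rangle=\varphi_j\big(\langle c_1,\dots,c_m\rangle\big)$ is a homomorphic image of a subgroup of the nilpotent group $G_j$, hence nilpotent. By Proposition \ref{prop:filtration_equiv_to_generating_V1}, $G$ admits a filtration by Carnot subgroups, and being a CMSG it is an infinite-dimensional Carnot group. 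I expect the main obstacles to be the density claim $G'=\langle V_1(G')\rangle_{SC}$ that powers Lemma \ref{lemma:generating_V1_implies_delta_Cauchy}, and the point that one cannot weaken the hypothesis to just ``$\langle\Omega\rangle$ nilpotent for finite $\Omega\subseteq A_i$'' (which is all that Proposition \ref{prop:filtration_equiv_to_generating_V1} delivers on each $G_i$), since the elements $c_k$ need not lie in the chosen set $A_j$ — this is precisely why problem (P2) is left open.
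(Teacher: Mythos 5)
Your proof is correct and follows essentially the same route as the paper: deduce \ref{eq:Rx_non-deg_lem} from \ref{eq:inv_non-deg_lem} via Lemma \ref{lem:cont_operations}, establish $G'=\langle V_1(G')\rangle_{SC}$ from the generating first layers of the $G_i$, invoke Lemma \ref{lemma:generating_V1_implies_delta_Cauchy} (on the quotient $G'/Z(\sfd')$, as the paper does) to get \ref{eq:delta_non-deg_lem}, and conclude by Theorem \ref{thm:DL_of_CMSG}. The second part also matches the paper's argument: the countable set $A=\bigcup_i\varphi_i(A_i)\subseteq V_1(G)$ generates $G$ by density, and nilpotency of $\langle\Omega\rangle$ for finite $\Omega$ is obtained by pushing the generators into a single $G_j$ via directedness, exactly as in the paper's proof.
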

	\begin{proof}
		We start with the following algebraic observation.
		Since each $ G_i $ is an infinite-dimensional Carnot group, by Proposition \ref{prop:filtration_equiv_to_generating_V1} there exists for each $ i\in I $ a countable set $ A_i \subset V_1(G_i) $ such that $ \langle A_i \rangle_{SC} = G_i $ and $ \langle \Omega_i \rangle $ is nilpotent for every finite subset $ \Omega_i \subset A_i $. Consider then the direct limit $ G' $ of \(\big(\{G_i\}_{i\in I},\{\varphi_{ij}\}_{i\leq j}\big)\) in the category of scalable groups equipped with the infimum-pseudodistance $ \sfd' $. Observe that $ \varphi'_i(V_1(G_i))\subset V_1(G) $ for every $ i\in I $, since $ \varphi'_i $ are morphisms of scalable groups.
Hence the set $ A \coloneqq \bigcup_{i\in I}\varphi'_i(A_i) $ is contained in $ V_1(G') $. Recall that the morphisms $ \varphi'_i $ are continuous with respect to the topology induced by the infimum-pseudodistance $ \sfd' $ on $ G' $. Then $ \varphi'_{i}(\langle A_i \rangle_{SC})\sus \langle \varphi'_i( A_i )\rangle_{SC}   $ for every $ i\in I $, and consequently
		\begin{equation}\label{eq:G'_generated}
		G' \subseteq \bigcup_{i\in I}\varphi'_i(G_i) = \bigcup_{i\in I}\varphi'_i(\langle A_i \rangle_{SC}) \sus \bigcup_{i\in I}\langle \varphi'_i( A_i )\rangle_{SC} \subseteq  \big\langle\bigcup_{i\in I} \varphi'_i( A_i )\big\rangle_{SC} = \langle A \rangle_{SC}.
		\end{equation}
				
		Equation \eqref{eq:G'_generated} shows that, in particular, $G' = \langle V_1(G') \rangle_{SC}$. Moreover, by \ref{eq:inv_non-deg_lem} and Lemma \ref{lem:cont_operations} \ref{it:inv_vs_Rx}, the right translations in $ G' $ are continuous. Then by Lemma \ref{lem:metric_limit_is_msg}, the scalable group $\big(G'/Z(\sfd'),\delta\big)$ together with the compatible infimum-distance $ \sfd $ is well-defined. We are now in a position to apply Lemma \ref{lemma:generating_V1_implies_delta_Cauchy}, which gives that the dilation map $ \delta $ is Cauchy-continuous on $ G'/Z(\sfd') $. Hence \ref{eq:delta_non-deg_lem} is satisfied. Since also \ref{eq:inv_non-deg_lem} is assumed to hold, by Theorem \ref{thm:DL_of_CMSG} the direct limit \(\big(G,\{\varphi_i\}_{i\in I}\big)\) in the category of complete metric scalable groups exists, and it equals the metric completion of the metric limit $\big(G'/Z(\sfd'),\delta, \sfd\big)$. This proves the first part of the claim.
		
		Finally, assume that each $ G_i $ is nilpotent and $ I $ is countable. Then the set $ A = \bigcup_{i\in I}\varphi'_{i}(A_i)  \subset V_1(G')$ introduced above is countable. Since the image of $ G' $ is dense in $ G $, \eqref{eq:G'_generated} guarantees that $ G = \langle A \rangle_{SC} $. Then, in order to prove that $ G $ is an infinite-dimensional Carnot group, by Proposition \ref{prop:filtration_equiv_to_generating_V1} it is enough to show that $ \langle \Omega \rangle $ is nilpotent for every finite subset $ \Omega \subset A $ . Now given such a set $ \Omega $, for every $ a \in \Omega $ fix $ i \in I $ such that there exists $ a_i \in A_i $ with $ a = \varphi_i(a_i) $. Let $ \ell \in I $ be such that, for every $ a\in \Omega $ and $ i\in I $ associated to $ a $, we have $ \ell > i $. Then, for every $ a \in \Omega $,
		\[
		a = \varphi_i(a_i) = \varphi_\ell \circ \varphi_{i\ell}(a_i) \in \varphi_\ell(G_\ell).
		\]
		Hence $ \langle \Omega \rangle \subset \varphi_\ell(G_\ell)$. Since each $ G_\ell $ is nilpotent by assumption,  $ \varphi_\ell(G_\ell) $ is a nilpotent subgroup of $ G $. We conclude that $ \langle \Omega \rangle $ is nilpotent and $ G $ is an infinite-dimensional Carnot group, which we were aiming to show.
	\end{proof}
We do not know if the assumption on nilpotency of the groups $ G_i $ in Theorem \ref{thm:DL_of_infdim_Carnots_proof} can be removed; we do not have a counterexample.
\medskip

Theorem \ref{thm:DL_of_infdim_Carnots_proof} gives, together with Corollary \ref{cor:nilpotent_DL}, the following result.
	\begin{corollary}\label{cor:DL_Carnot}
		Let \(\big(\{G_i\}_{i\in I},\{\varphi_{ij}\}_{i\leq j}\big)\) be a 
countable direct system of Carnot groups in the category of CMSGs such that the nilpotency step of the groups $ G_i $ is uniformly bounded. If \ref{eq:Rx_non-deg_lem} is satisfied, then the direct limit of \(\big(\{G_i\}_{i\in I},\{\varphi_{ij}\}_{i\leq j}\big)\) exists in the category of CMSGs and it is an infinite-dimensional Carnot group.
	\end{corollary}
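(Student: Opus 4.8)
The plan is to reduce the assertion to Theorem~\ref{thm:DL_of_infdim_Carnots_proof}: once the direct system is known to satisfy \ref{eq:inv_non-deg_lem}, that theorem produces the direct limit in the category of CMSGs and, thanks to the countability of $I$ together with the nilpotency of the groups involved, identifies it as an infinite-dimensional Carnot group. So the two preliminary tasks are to observe that the system really falls under the scope of Theorem~\ref{thm:DL_of_infdim_Carnots_proof} and to extract \ref{eq:inv_non-deg_lem} from the weaker hypothesis \ref{eq:Rx_non-deg_lem}; the latter is precisely the argument carried out in the proof of Corollary~\ref{cor:nilpotent_DL}.

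First I would note that each $G_i$, being a finite-dimensional Carnot group, is nilpotent and admits a filtration by Carnot subgroups (for instance the constant sequence $N_m\coloneqq G_i$), hence is an infinite-dimensional Carnot group in the sense of Definition~\ref{def:inf-dim_Carnot}; thus $\big(\{G_i\}_{i\in I},\{\varphi_{ij}\}_{i\leq j}\big)$ is a countable direct system of infinite-dimensional Carnot groups in the category of CMSGs, all of whose members are nilpotent. Next, writing $\big(G',\{\varphi'_i\}_{i\in I}\big)$ for the direct limit in the category of scalable groups endowed with the infimum-pseudodistance $\sfd'$, I would check that $G'$ is nilpotent: if $s\in\N$ bounds the nilpotency steps of all the $G_i$ and $x_1,\dots,x_s\in G'$ are arbitrary, directedness of $I$ provides $k\in I$ and $y_1,\dots,y_s\in G_k$ with $\varphi'_k(y_\ell)=x_\ell$, and since $G_k$ is nilpotent of step at most $s-1$ we get $[x_1,[\dots,[x_{s-1},x_s]\cdots]]=\varphi'_k\big([y_1,[\dots,[y_{s-1},y_s]\cdots]]\big)=e'$. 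By \ref{eq:Rx_non-deg_lem} and Lemma~\ref{lem:cont_operations}~\ref{it:Rx_unif_cont}, the right translation $R_x$ is continuous at every point of $G'$ (and not merely at $e'$) for each $x\in G'$, so Lemma~\ref{lem:nilpotent_inv_cauchy} --- whose proof goes through verbatim for the pseudodistance $\sfd'$ --- shows that ${\sf Inv}\colon G'\to G'$ is Cauchy-continuous, i.e.\ \ref{eq:inv_non-deg_lem} holds. At this point Theorem~\ref{thm:DL_of_infdim_Carnots_proof}, applied to the countable direct system of nilpotent infinite-dimensional Carnot groups $\big(\{G_i\}_{i\in I},\{\varphi_{ij}\}_{i\leq j}\big)$, yields at once both the existence of the direct limit in the category of CMSGs and the fact that it is an infinite-dimensional Carnot group.

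I do not anticipate any substantial obstacle, since the heavy lifting is hidden inside the cited results: Theorem~\ref{thm:DL_of_infdim_Carnots_proof} itself disposes of the third non-degeneracy condition \ref{eq:delta_non-deg_lem}, using that Carnot groups have generating first layer --- so that $G'=\langle V_1(G')\rangle_{SC}$ by the computation \eqref{eq:G'_generated} --- and then Lemma~\ref{lemma:generating_V1_implies_delta_Cauchy}. The only points that require a moment's attention are the upgrade from continuity of $R_x$ at the identity to continuity of $R_x$ everywhere, via Lemma~\ref{lem:cont_operations}~\ref{it:Rx_unif_cont} (needed before Lemma~\ref{lem:nilpotent_inv_cauchy} can be invoked), and the elementary fact that the nilpotency of each $G_i$ passes to the algebraic direct limit $G'$, which hinges on the possibility of representing any finite family of elements of $G'$ inside a single $G_k$.
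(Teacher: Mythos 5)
Your proposal is correct and follows essentially the same route the paper intends: extract \ref{eq:inv_non-deg_lem} from \ref{eq:Rx_non-deg_lem} via the nilpotency of $G'$ and Lemma \ref{lem:nilpotent_inv_cauchy} (the argument of Corollary \ref{cor:nilpotent_DL}), then apply Theorem \ref{thm:DL_of_infdim_Carnots_proof} to the countable system of nilpotent (infinite-dimensional) Carnot groups. The points you flag explicitly --- viewing each Carnot group as an infinite-dimensional one via the constant filtration, upgrading continuity of $R_x$ from $e'$ to everywhere by Lemma \ref{lem:cont_operations}~\ref{it:Rx_unif_cont}, and checking the lemma works for the pseudodistance $\sfd'$ --- are exactly the glue the paper leaves implicit.
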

It would be interesting to know whether a uniform bound on the nilpotency steps of Carnot groups $ G_i $ automatically guarantees condition \ref{eq:Rx_non-deg_lem}. In Proposition \ref{prop:degenerate_DS} we give an example of a direct system of Carnot groups $ G_i $ that does not satisfy \ref{eq:Rx_non-deg_lem} and for which the nilpotency steps of $ G_i $ grow unlimitedly.
	
	\subsection{Example of a degenerate direct system of Carnot groups}
	\label{ss:ex_deg_DS}	
	We now give an example of a direct system of Carnot groups that does not satisfy \ref{eq:Rx_non-deg_lem} (see Proposition  \ref{prop:degenerate_DS}). Below we denote by $ \mathbb F_{2,k} $ the Free Lie group of rank 2 and step $ k $ with the canonical projections $ \pi^l_k\colon  \mathbb F_{2,l}\to  \mathbb F_{2,k} $ for every  $l, k\in \N $ with $ k\leq l $. For each $ k\in \N $, fix a basis $ X_k,Y_k $ for the first layer of the Lie algebra such that $ \pi^l_k(x_l) = x_k $, and $ \pi^l_k(y_l)=y_k $ for every $l,k\in \N$, $ k\leq l $, where we denote $ x_k \coloneqq \exp(X_k) $ and $ y_k \coloneqq \exp(Y_k) $. We also fix a norm on each horizontal layer for which $ \|X_k\| = \|Y_k\| = 1 $ and consider the corresponding Carnot--Carath\'{e}odory distance $ \sfd^k_{\mathbb F} $ on $ \mathbb F_{2,k} $.
	
	In the following lemma we put together several results in \cite{LDZ19}.
\begin{lemma}\label{lem:Free_gp_noncont}
	For each $k\in \N $, let $ \mathbb F_{2,k} $ be the free Lie group of rank 2 and step $ k $ with generators $ x_k$ and $y_k $ as described above. Then for every $ \eps > 0 $, it holds
	\[
	\lim_{k\to \infty} \sfd^k_{\mathbb F}(x_k, \delta_\eps(y_k)x_k) > 2.
	\]
\end{lemma}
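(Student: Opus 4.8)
The plan is to reduce the statement, via left-invariance, to computing a limiting Carnot--Carath\'{e}odory distance in the tower of free nilpotent groups, to identify the relevant element as the endpoint of an explicit concatenation of horizontal segments, and then to invoke the reduced-path (tree-reduction) description of these limiting distances from \cite{LDZ19}.

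First I would rewrite the quantity. Since $Y_k$ lies in the first layer we have $\delta_\eps(y_k)=\exp(\eps Y_k)$, and by left-invariance of $\sfd^k_{\mathbb F}$ (multiplying both arguments on the left by $x_k^{-1}=\exp(-X_k)$),
\[
\sfd^k_{\mathbb F}\big(x_k,\delta_\eps(y_k)x_k\big)=\sfd^k_{\mathbb F}\big(e,g_k\big),\qquad g_k:=\exp(-X_k)\exp(\eps Y_k)\exp(X_k).
\]
The element $g_k$ is exactly the endpoint in $\mathbb F_{2,k}$ of the horizontal curve $\eta$ obtained by concatenating the three segments with horizontal velocities $-X_k$, $Y_k$, $X_k$ on the intervals $[0,1]$, $[1,1+\eps]$, $[1+\eps,2+\eps]$; hence $\sfd^k_{\mathbb F}(e,g_k)\le 2+\eps$ for every $k$. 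Moreover $\pi^l_k(g_l)=g_k$ whenever $k\le l$ (as $\pi^l_k$ is a morphism of scalable groups fixing the generators), and the projections $\pi^l_k$ are $1$-Lipschitz, so $\big(\sfd^k_{\mathbb F}(e,g_k)\big)_k$ is non-decreasing; in particular the limit in the statement exists and lies in $(0,2+\eps]$.

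The remaining, and essential, point is to bound this limit below by a number strictly larger than $2$. Here I would appeal to \cite{LDZ19}: in the inverse limit of $\{\mathbb F_{2,k}\}_k$ the element $g=(g_k)_k$ is the signature of the planar concatenation $\eta=[-X]*[\eps Y]*[X]$, and the limiting sub-Riemannian distance of $g$ from the identity equals the length of the tree-reduced path equivalent to $\eta$. The three segments of $\eta$ have non-collinear directions at the break-points and $\eta$ traces a simple arc, so $\eta$ coincides with its reduced path, whence
\[
\lim_{k\to\infty}\sfd^k_{\mathbb F}\big(x_k,\delta_\eps(y_k)x_k\big)=\mathrm{length}(\eta)=2+\eps>2.
\]

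The reduction via left-invariance, the a priori bound $2+\eps$, and the monotonicity in $k$ are routine. The main obstacle is the lower bound $\lim_k\sfd^k_{\mathbb F}(e,g_k)\ge 2+\eps$: one must rule out the existence, for large $k$, of horizontal curves from $e$ to $g_k$ substantially shorter than $\eta$, i.e.\ show that reduced paths are asymptotically length-minimizing within their signature class. This is precisely the content of the tree-reduction results we are combining from \cite{LDZ19}; by contrast, elementary layer-by-layer estimates on the Lie-algebra coordinates of $g_k$ do not seem to produce a lower bound exceeding $2$.
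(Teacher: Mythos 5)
Your proposal is correct and follows essentially the same route as the paper: reduce by left-invariance to the distance from the identity to $x_k^{-1}\delta_\eps(y_k)x_k$, recognize this element as the endpoint of the lift of the explicit three-segment planar curve of length $2+\eps$, and invoke the results of \cite{LDZ19} (uniqueness of lifts and the fact that the inverse limit with the supremum distance is a metric tree, so that the injective lifted curve is a geodesic) to conclude that the limiting distance equals $2+\eps>2$. Your added remarks on the upper bound and monotonicity in $k$ are harmless extras; the tree/reduced-path property you cite is exactly the paper's appeal to Theorem 4.2 of \cite{LDZ19}.
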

\begin{proof}	
	Fix any $ \eps > 0 $ and consider the curve $ \gamma\colon [0,2+\eps]\to \R^2 $,
	\[
	\gamma(t) = \begin{cases}
	(-t,0),&\quad t\in [0,1], \\
	(-1,t-1),&\quad t\in [1,1+\eps], \\
	(t-(2+\eps),\eps)&\quad t\in [1+\eps,2+\eps]. 
	\end{cases}
	\]
	By \cite[Lemma 4.1]{LDZ19}, the curve $ \gamma $ admits a unique lift $ \gamma_k $ to $ \mathbb F_{2,k} $ for every $ k\in \N $. It follows from the uniqueness of the lift that $ \gamma_k(2+\eps) = x^{-1}_k\delta_\eps(y_k)x_k $ for every $ k\in \N $.  Let $  \big(\mathbb F_{2,\infty}, \{\pi^\infty_k \}_{k\in \N} \big) $ be the inverse limit of the inverse system $\big(\{ \mathbb F_{2,k}\}_{k\in \N},\{\pi^l_{k}\}_{k\leq l}\big)$ in the category of scalable groups equipped with the supremum-metric $ \sfd' $ (see \eqref{eq:sup-metric} for the definition). By \cite[Lemma 2.7]{LDZ19}, the curve $ \gamma $ admits a unique lift $ \eta $ to the group $  \mathbb F_{2,\infty} $ satisfying $ \pi^\infty_k(\eta(2+\eps)) = x^{-1}_k\delta_\eps(y_k)x_k $ and $ \pi^\infty_k(\eta(0))=e_k $ for every $ k\in \N $. Moreover, the curve $ \eta $ has length $ L(\eta) = L(\gamma) = 2+\eps $.
	
	According to \cite[Theorem 4.2]{LDZ19}, the inverse limit $ ( \mathbb F_{2,\infty},\sfd') $ is a metric tree. In particular, the injective curve $ \eta $ is a geodesic, which implies that $  \sfd'(e_\infty,\eta(2+\eps))=\sfd'(\eta(0),\eta(2+\eps)) = L(\eta) = 2+\eps$. Hence, by definition of $ \sfd' $, we obtain
	\[
	\lim_{k\to \infty} \sfd^k_{\mathbb F}(e_k, x_k^{-1}\delta_\eps(y_k)x_k) = 	\lim_{k\to \infty} \sfd^k_{\mathbb F}\big(\pi^\infty_k(e_\infty),\pi^\infty_k(\eta(2+\eps))\big) = \sfd'(e_\infty,\eta(2+\eps)) = 2+\eps.
	\]
	The claim follows then from the left-invariance of each distance $ \sfd^k_{\mathbb F} $.
\end{proof}

We now define a stratified Lie algebra $ \mathfrak g_i $ of rank $ i+1 $ and step $ i $ for every $ i\in \N $ as follows: we fix a basis $ \{X_i,Y_i^1,\dots,Y_i^i\} $ for the first layer of $ \mathfrak g_i $, where each pair $ \{X_i,Y^k_i\} $ generates the free Lie algebra of rank 2 and step $ k $, and where all the other brackets are zero. Observe that each $ \mathfrak g_i $ is indeed a Lie algebra: the bracket $ [Z_1,[Z_2,Z_3]] =0 $ for all distinct basis elements $ Z_1,Z_2,Z_3 \in \{X_i,Y_i^1,\dots,Y_i^i\} $, and hence the Jacobi identity is satisfied. We also fix a norm on the horizontal layer of $ \mathfrak g_i $ that gives unit length to each basis vector.

Let us denote by $ G_i $ the Carnot group whose Lie algebra is $ \mathfrak g_i $ and let $ \sfd_i $ be the Carnot--Carath\'{e}odory distance on $ G_i $. Denote also $ x_i \coloneqq \exp(X_i) $ and $ y_i^k \coloneqq \exp(Y_i^k) $ for every $ i\in \N $ and $ k= 1,\dots,i $. Observe that there exist isometric embeddings $ \iota_{ij} \colon G_i \to G_j $ for $ i\leq j $ satisfying $ \iota_{ij}(x_i) = x_j $ and $ \iota_{ij}(y_i^k) = y_j^k $ for every $ k= 1,\dots,i $. Moreover, each $ G_i $ contains an isometric copy of every $ \mathbb{F}_{2,k} $ for $ k= 1,\dots,i $.
\begin{proposition}[A degenerate direct system of Carnot groups]
	\label{prop:degenerate_DS}
Let $ G_i $ and  $ \iota_{ij} \colon G_i \to G_j $ be as above for every $ i,j\in \N $, $ i\leq j $. Then the direct system \(\big(\{G_i\}_{i\in \N},\{\iota_{ij}\}_{i\leq j}\big)\) does not satisfy \ref{eq:Rx_non-deg_lem}.
\end{proposition}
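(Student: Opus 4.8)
The plan is to contradict \ref{eq:Rx_non-deg_lem} by exhibiting an element $x\in G'$ at which $R_x\colon G'\to G'$ is discontinuous at the identity $e'$. Throughout, $\big(G',\{\varphi'_i\}_{i\in\N}\big)$ denotes the direct limit of $\big(\{G_i\}_{i\in\N},\{\iota_{ij}\}_{i\leq j}\big)$ in the category of scalable groups and $\sfd'$ the infimum-pseudodistance. As a preliminary observation, since each $\iota_{ij}$ is an isometric embedding, each $\varphi'_i$ is injective and $\sfd'$ is in fact a distance satisfying $\sfd'\big(\varphi'_i(a),\varphi'_i(b)\big)=\sfd_i(a,b)$ for all $i\in\N$ and $a,b\in G_i$. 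I would also use that, for $k\leq i$, the isometric copy of $\mathbb F_{2,k}$ inside $G_i$ is the (closed scalable) subgroup generated by $x_i$ and $y_i^k$, the isometric isomorphism sending $x_k\mapsto x_i$ and $y_k\mapsto y_i^k$. Finally, set $x:=\varphi'_1(x_1)\in G'$, so that $x=\varphi'_i(x_i)$ for every $i$, and for $k\in\N$ write $y^k:=\varphi'_i(y_i^k)\in G'$ for any $i\geq k$ (this is well-defined since $\iota_{ij}(y_i^k)=y_j^k$).

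The key input is Lemma \ref{lem:Free_gp_noncont}: by left-invariance of the metrics $\sfd^m_{\mathbb F}$, for every fixed $\eps>0$ we have $\lim_{m\to\infty}\sfd^m_{\mathbb F}\big(x_m,\delta_\eps(y_m)x_m\big)>2$, so there is $m(\eps)\in\N$ with $\sfd^m_{\mathbb F}\big(x_m,\delta_\eps(y_m)x_m\big)>2$ for all $m\geq m(\eps)$. I would then carry out a diagonal construction: for $n\in\N$ let $k_n:=\max\{n,m(1/n)\}$ and put $w_n:=\delta'_{1/n}(y^{k_n})\in G'$, where $\delta'$ is the dilation of $G'$. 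On the one hand, compatibility of $\sfd'$ with $(G',\delta')$ together with $\sfd_{k_n}(e_{k_n},y_{k_n}^{k_n})\leq\|Y_{k_n}^{k_n}\|=1$ (via the horizontal segment $t\mapsto\exp(tY_{k_n}^{k_n})$) gives $\sfd'(w_n,e')=\tfrac1n\,\sfd_{k_n}(y_{k_n}^{k_n},e_{k_n})\leq\tfrac1n$, so $w_n\to e'$. On the other hand, since $\varphi'_{k_n}$ is a morphism of scalable groups, $x^{-1}w_nx=\varphi'_{k_n}\big(x_{k_n}^{-1}\delta_{1/n}(y_{k_n}^{k_n})x_{k_n}\big)$, and this element of $G_{k_n}$ lies in the copy of $\mathbb F_{2,k_n}$ generated by $x_{k_n}$ and $y_{k_n}^{k_n}$; hence, using left-invariance of $\sfd'$ and that copy being isometric, $\sfd'\big(R_x(w_n),R_x(e')\big)=\sfd'(w_nx,x)=\sfd'(x^{-1}w_nx,e')=\sfd^{k_n}_{\mathbb F}\big(x_{k_n},\delta_{1/n}(y_{k_n})x_{k_n}\big)>2$, because $k_n\geq m(1/n)$. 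Since $w_n\to e'$ while $\sfd'\big(R_x(w_n),R_x(e')\big)>2$ for every $n$, the map $R_x$ fails to be continuous at $e'$, i.e.\ \ref{eq:Rx_non-deg_lem} does not hold.

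The heart of the matter is the simultaneous control of the two quantities: we must let $\eps_n=1/n\to0$ so that $w_n\to e'$, yet keep the conjugated distance $\sfd'(x^{-1}w_nx,e')$ bounded below away from $0$ — and this is exactly what Lemma \ref{lem:Free_gp_noncont} delivers, since its lower bound $2$ is available for every fixed dilation factor $\eps$ once the step is large enough, so we are free to let $k_n\to\infty$. The remaining points — that $\sfd'$ restricts to the intrinsic distances through the isometric maps $\varphi'_i$, and that the relevant group elements live in the embedded copies of $\mathbb F_{2,k_n}$ to which Lemma \ref{lem:Free_gp_noncont} applies — are routine consequences of the construction of the groups $G_i$ and of the metric limit.
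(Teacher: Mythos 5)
Your proof is correct and follows essentially the same route as the paper: the same element $x=\varphi'_i(x_i)$, the same small perturbations $\delta'_\eps(y^k)$ with the step $k$ chosen via Lemma \ref{lem:Free_gp_noncont}, and the same use of the isometric embeddings to compute $\sfd'$ exactly. The only cosmetic differences are that you phrase the argument as a diagonal sequence with $\eps=1/n$ and pass to the conjugate $x^{-1}w_nx$ by left-invariance, whereas the paper fixes an arbitrary $\eps>0$ and bounds $\sfd'(x,y_\eps x)$ directly.
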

\begin{proof}
	Let $ \big(G',\{\iota_i\}_{i\in \N}\big)$ be the direct limit of \(\big(\{G_i\}_{i\in \N},\{\iota_{ij}\}_{i\leq j}\big)\) in the category of scalable groups. Observe that $ \iota_i(x_i) = \iota_j(x_j) $ for every $ i,j\in \N $ and $ \iota_i(y_i^k)= \iota_j(y_j^k) $ for every $ k\in \N $ and $ i,j\geq k $. Let $ x\in G' $ be the element satisfying $ x=\iota_i(x_i) $ for every $ i\in \N $. We are going to show that $ R_x $ is not continuous at the identity with respect to the pseudometric $ \sfd' $ defined in \eqref{eq:inf-pseudodist}, which would prove that  \ref{eq:Rx_non-deg_lem} is not satisfied. 
	
	Let $ \eps > 0 $. Since every $ G_i $ contains an isometric copy of $ \mathbb F_{2,i} $ with generators $ x_i, y_i^i $, by Lemma \ref{lem:Free_gp_noncont} there exists $ k \in \N $ such that
	\[
	\sfd_k(x_k, \delta_\eps(y_k^k)x_k) > 2.
	\]
	Moreover, we have $ \sfd_k(e_k,\delta_\eps(y_k^k))=\eps $. Denote $ y_\eps \coloneqq \iota_k(\delta_\eps(y_k^k)) \in G' $. Since the morphisms $ \iota_{ij} $ are isometric embeddings, it follows from the definition of $ \sfd' $ that $ \sfd'(e',y_\eps) = \eps $ and
	\[
	\sfd'(\iota_k(x_k),y_\eps\, \iota_k(x_k)) = \sfd'(x,y_\eps x) > 2.
	\]
	Since $ \eps $ was arbitrary, this proves that $ R_x $ is not continuous at the identity, as claimed.
\end{proof}
\section{A Rademacher-type theorem for direct limits of CMSGs}
\label{s:Rademacher}
As already described in the Introduction, in this section
we study those CMSGs satisfying a suitable form of Rademacher
theorem. More specifically, we will show that such a class of
CMSGs is stable under taking direct limits,
thus generalizing one of the main results of \cite{LDLM19}.
\medskip

To begin with, we define what is a `null set' in a CMSG.
At this level of generality, it is not clear whether this
family of null sets is actually the family of negligible sets
of some measure. 
\begin{definition}[Null sets in a CMSG]\label{def:family_null_sets}
Let \(G\) be a complete metric scalable group. Then by \emph{family of null sets}
in \(G\) we mean a left-invariant \(\sigma\)-ideal \(\mathcal N\)
of the Borel \(\sigma\)-algebra \(\mathscr B(G)\), i.e.,
\begin{itemize}
\item[\(\rm i)\)] \(\mathcal N\) is a subset of the Borel \(\sigma\)-algebra
\(\mathscr B(G)\), containing the empty set.
\item[\(\rm ii)\)] If \((N_n)_{n\in\N}\subseteq\mathcal N\),
then \(\bigcup_{n\in\N}N_n\in\mathcal N\).
\item[\(\rm iii)\)] If \(N\in\mathcal N\) and \(N'\in\mathscr B(G)\) satisfy
\(N'\subseteq N\), then \(N'\in\mathcal N\).
\item[\(\rm iv)\)] \(gN\in\mathcal N\) for every \(g\in G\) and \(N\in\mathcal N\).
\end{itemize}
\end{definition}
Let us consider a countable family \(\{(G_i,\mathcal N_i)\}_{i\in I}\),
where \(\{G_i\}_{i\in I}\) is a direct system of CMSGs admitting a direct
limit \(G\), while the sets \(\{\mathcal N_i\}_{i\in I}\) are families
of null sets. Then, as we are going to show, there is a natural way
to define a family \(\mathcal N\) of null sets in \(G\). The construction
is inspired (and extends) the analogous ones in \cite{Aronszajn1976}
and \cite{LDLM19}.
\begin{lemma}
Let \(\big(\{G_i\}_{i\in I},\{\varphi_{ij}\}_{i\leq j}\big)\) be a direct
system of complete metric scalable groups (where the directed set \(I\) is
at most countable) having direct limit \(\big(G,\{\varphi_i\}_{i\in I}\big)\)
in that category. Suppose that each space \(G_i\) is endowed with a family of null sets \(\mathcal N_i\).
Then the set \(\mathcal N\subseteq\mathscr B(G)\), given by
\begin{equation}\label{eq:def_N}
\mathcal N\coloneqq\bigg\{\bigcup_{i\in I}N_i\;\bigg|\;
N_i\in\mathscr B(G)\text{ and }\varphi_i^{-1}(qN_i)\in\mathcal N_i
\text{ for every }i\in I\text{ and }q\in G\bigg\},
\end{equation}
is a family of null sets in \(G\).
\end{lemma}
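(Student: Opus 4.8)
The plan is to verify the four defining conditions (i)--(iv) of Definition \ref{def:family_null_sets} for $\mathcal N$ directly from the description \eqref{eq:def_N}, using repeatedly two elementary observations. First, since the distance on $G$ and the distance on each $G_i$ are left-invariant, every left translation $L_q$ is an isometric bijection — with inverse $L_{q^{-1}}$ — hence a homeomorphism; in particular it carries Borel sets to Borel sets and, being a bijection, commutes with arbitrary unions and with finite intersections of subsets. Second, each $\varphi_i\colon G_i\to G$ is $1$-Lipschitz, hence continuous, so $\varphi_i^{-1}$ maps $\mathscr B(G)$ into $\mathscr B(G_i)$. Finally, since $I$ is at most countable, a countable union of Borel subsets of $G$ is again Borel; in particular every member of $\mathcal N$, being of the form $\bigcup_{i\in I}N_i$ with each $N_i$ Borel, lies in $\mathscr B(G)$, and the choice $N_i\coloneqq\emptyset$ for all $i\in I$ (for which $\varphi_i^{-1}(q\emptyset)=\emptyset\in\mathcal N_i$) shows $\emptyset\in\mathcal N$. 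This settles (i).

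For (ii), given $(M^{(n)})_{n\in\N}\sus\mathcal N$ I would fix for each $n$ a witnessing family $(N_i^{(n)})_{i\in I}$ with $M^{(n)}=\bigcup_i N_i^{(n)}$ and $\varphi_i^{-1}(qN_i^{(n)})\in\mathcal N_i$ for all $i\in I$ and $q\in G$, then set $N_i\coloneqq\bigcup_n N_i^{(n)}$. Each $N_i$ is Borel, $\bigcup_i N_i=\bigcup_n M^{(n)}$, and since $L_q$ is a bijection one has $qN_i=\bigcup_n qN_i^{(n)}$, whence $\varphi_i^{-1}(qN_i)=\bigcup_n\varphi_i^{-1}(qN_i^{(n)})\in\mathcal N_i$ by condition (ii) for $\mathcal N_i$; hence $\bigcup_n M^{(n)}\in\mathcal N$. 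For (iii), given $N=\bigcup_i N_i\in\mathcal N$ (with a witnessing family) and a Borel $N'\sus N$, I would take $N_i'\coloneqq N'\cap N_i$: these are Borel with $\bigcup_i N_i'=N'\cap N=N'$, and for each $q\in G$ the set $\varphi_i^{-1}(qN_i')=\varphi_i^{-1}(qN')\cap\varphi_i^{-1}(qN_i)$ is a Borel subset of $\varphi_i^{-1}(qN_i)\in\mathcal N_i$ — here one uses that $qN'$ is Borel since $L_q$ is a homeomorphism and that $\varphi_i$ is continuous — so condition (iii) for $\mathcal N_i$ gives $\varphi_i^{-1}(qN_i')\in\mathcal N_i$, and therefore $N'\in\mathcal N$. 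For (iv), given $N=\bigcup_i N_i\in\mathcal N$ and $g\in G$, I would take $N_i'\coloneqq gN_i$; these are Borel with $\bigcup_i N_i'=gN$, and for every $q\in G$ the product $qg$ is again an element of $G$, so $\varphi_i^{-1}(qN_i')=\varphi_i^{-1}\big((qg)N_i\big)\in\mathcal N_i$ by the defining property of $N$; hence $gN\in\mathcal N$.

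There is no genuine obstacle: the construction \eqref{eq:def_N} is tailored so that each property of $\mathcal N$ descends from the corresponding property of the $\mathcal N_i$. The only points deserving a moment's attention are the measurability bookkeeping in (iii) — one must check that $\varphi_i^{-1}(qN')$ is Borel before invoking the downward closure of $\mathcal N_i$ — and the remark, used in (iv), that the universal quantifier ``for every $q\in G$'' built into \eqref{eq:def_N} is exactly what makes left-invariance automatic: translating a witnessing family by $g$ merely restricts the range of $q$ over which the membership $\varphi_i^{-1}(qN_i)\in\mathcal N_i$ is required to hold.
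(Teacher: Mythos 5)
Your proof is correct and follows essentially the same route as the paper's: verify (i)--(iv) directly, taking unions of witnessing families for (ii), intersections $N'\cap N_i$ for (iii), and translates $gN_i$ with the substitution $q\mapsto qg$ for (iv). The only difference is that you spell out the measurability bookkeeping (Borelness of $qN'$ via left-invariance of the distance and of $\varphi_i^{-1}(qN')$ via continuity of the $1$-Lipschitz morphisms $\varphi_i$), which the paper leaves implicit; this is a harmless refinement, not a different argument.
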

\begin{proof}
Let us prove that \(\mathcal N\) satisfies the four conditions in Definition
\ref{def:family_null_sets}:\\
{\color{blue}i)} Trivially, \(\emptyset\in\mathcal N\subseteq\mathscr B(G)\).\\
{\color{blue}ii)} Fix any
\(\{N^n_i\,:\,n\in\N,\,i\in I\}\subseteq\mathscr B(G)\) such
that \(\bigcup_{i\in I}N^n_i\in\mathcal N\) for every \(n\in\N\). Let us define
the set \(N\in\mathscr B(G)\) as \(N\coloneqq\bigcup_{i\in I}\bigcup_{n\in\N}N^n_i\).
Given any \(i\in I\) and \(q\in G\), we have that
\[
\varphi_i^{-1}\bigg(q\bigcup_{n\in\N}N^n_i\bigg)=
\varphi_i^{-1}\bigg(\bigcup_{n\in\N}q N^n_i\bigg)=
\bigcup_{n\in\N}\varphi_i^{-1}(q N^n_i)\in\mathcal N_i,
\]
which proves that \(N\in\mathcal N\), as required.\\
{\color{blue}iii)} Fix any
\(N\in\mathcal N\) and \(N'\in\mathscr B(G)\) such that
\(N'\subseteq N\), say \(N=\bigcup_{i\in I}N_i\). Given \(i\in I\)
and \(q\in G\), we have that \(N_i\cap N'\in\mathscr B(G)\) and
\(\varphi_i^{-1}\big(q(N_i\cap N')\big)\subseteq\varphi_i^{-1}(q N_i)\in\mathcal N_i\),
whence \(N_i\cap N'\in\mathcal N_i\) as well. This grants that
\(N'=\bigcup_{i\in I}N_i\cap N'\in\mathcal N\), as required.\\
{\color{blue}iv)} Fix any
\(N=\bigcup_{i\in I}N_i\in\mathcal N\) and \(g\in G\). Since
\(\varphi_i^{-1}\big(q(g N_i)\big)=\varphi_i^{-1}\big((qg)N_i\big)\in\mathcal N_i\)
for every \(i\in I\) and \(q\in G\), thus  accordingly
\(gN=\bigcup_{i\in I}g N_i\in\mathcal N\), as required.
\end{proof}
Next we introduce the notion of G\^{a}teaux differential in the sense of Pansu, whose formulation is taken from \cite{LDLM19}.
\begin{definition}[G\^{a}teaux differentiability]\label{def:Gateaux_diff}
Let \((G,\delta,\sfd)\), \((G',\delta',\sfd')\) be metric scalable groups.
Given \(f\colon G\to G'\) and \(p\in G\), we say that \(f\)
is \emph{G\^{a}teaux differentiable} at \(p\) if the following hold:
\begin{itemize}
\item[\(\rm i)\)] For every element \(g\in G\), we have that
\[
\exists\,\d_p f(g)\coloneqq\lim_{\lambda\to 0}
\delta'_{1/\lambda}\Big(f(p)^{-1}f\big(p\,\delta_\lambda(g)\big)\Big)\in G'.
\]
\item[\(\rm ii)\)] The resulting function \(\d_p f\colon G\to G'\) is a
continuous group morphism.
\end{itemize}
We say that \(\d_p f\) is the \emph{G\^{a}teaux differential} of \(f\) at \(p\).
Moreover, let us define
\[
{\rm ND}(f)\coloneqq\big\{p\in G\;\big|\;
f\text{ is not G\^{a}teaux differentiable at }p\big\}.
\]
We say that \({\rm ND}(f)\subseteq G\) is the \emph{non-differentiability set} of \(f\).
\end{definition}
\begin{remark}{\rm
If \(f\) is G\^{a}teaux differentiable at \(p\), then its G\^{a}teaux differential
\(\d_p f\colon G\to G'\) is a Lipschitz morphism of scalable groups.
Indeed, it is well-known (and easy to prove) that \(\d_p f\) is a morphism
of scalable groups, while Proposition \ref{prop:morph_cont_vs_Lip} grants that \(\d_p f\)
is Lipschitz.
\fr}\end{remark}
We say that a complete metric scalable group \(G\) together with some family \(\mathcal N\) of null sets has the Rademacher property if every Lipschitz function
on \(G\) is `\(\mathcal N\)-almost everywhere' G\^{a}teaux differentiable.
More precisely:
\begin{definition}[Rademacher property]\label{def:Rademacher_prop}
Let \((G,\mathcal N)\) be a complete metric scalable group endowed
with a family of null sets. Then \((G,\mathcal N)\) is said to
\emph{have the Rademacher property} provided it holds
\[
{\rm ND}(f)\in\mathcal N\quad\text{ for every Lipschitz function }f\colon G\to\R.
\]
\end{definition}
We are finally in a position to state and prove the main result
of this section, which gives Theorem \ref{thm:Rademacher_property} as a corollary.
\begin{theorem}[Stability of the Rademacher property]\label{thm:Rademacher}
Let \(\big(\{G_i\}_{i\in I},\{\varphi_{ij}\}_{i\leq j}\big)\) be a direct
system of complete metric scalable groups (where the directed set \(I\) is countable) having direct limit \(\big(G,\{\varphi_i\}_{i\in I}\big)\)
in that category.
Let each space \(G_i\) be endowed with a family of null sets \(\mathcal N_i\)
such that \((G_i,\mathcal N_i)\) has the Rademacher property. Then \((G,\mathcal N)\)
has the Rademacher property, where \(\mathcal N\) is the family of null sets
defined in \eqref{eq:def_N}.
\end{theorem}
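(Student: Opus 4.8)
The plan is to exhibit the non-differentiability set $\mathrm{ND}(f)$ of an arbitrary Lipschitz function $f\colon G\to\R$ as a countable union of Borel sets of exactly the shape appearing in \eqref{eq:def_N}. For $i\in I$ and $p\in G$ set $\partial_xf(p)\coloneqq\lim_{\lambda\to0}\lambda^{-1}\big(f(p\,\delta_\lambda(x))-f(p)\big)$ whenever this limit exists, and define
\[
M_i\coloneqq\big\{p\in G\;\big|\;f\circ L_p\circ\varphi_i\colon G_i\to\R\text{ is not G\^{a}teaux differentiable at }e_i\big\}.
\]
Note that $f\circ L_p\circ\varphi_i$ is Lipschitz ($f$ is Lipschitz, $L_p$ an isometry, $\varphi_i$ is $1$-Lipschitz), and that, since $\varphi_i$ is a morphism of scalable groups, the candidate G\^{a}teaux differential of $f\circ L_p\circ\varphi_i$ at $e_i$ is the map $g\mapsto\partial_{\varphi_i(g)}f(p)$. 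I will establish that (a) $\mathrm{ND}(f)=\bigcup_{i\in I}M_i$, (b) $\varphi_i^{-1}(qM_i)\in\mathcal N_i$ for every $i\in I$ and $q\in G$, and (c) each $M_i$ is Borel; the conclusion $\mathrm{ND}(f)\in\mathcal N$ is then immediate from \eqref{eq:def_N}.

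For (a), the inclusion $\bigcup_iM_i\subseteq\mathrm{ND}(f)$ is the easy one: if $f$ were G\^{a}teaux differentiable at $p$, then $\d_p f$ would be a continuous group morphism, hence so would $\d_p f\circ\varphi_i$, and this map coincides with the candidate differential $g\mapsto\partial_{\varphi_i(g)}f(p)=\d_p f(\varphi_i(g))$; thus $f\circ L_p\circ\varphi_i$ would be G\^{a}teaux differentiable at $e_i$ for every $i$, i.e. $p\notin M_i$ for all $i$. For the reverse inclusion I use a density reduction. By Remark \ref{rmk:density_claim} the subgroup $S\coloneqq\bigcup_{i\in I}\varphi_i(G_i)$ is dense in $G$, and $\{\varphi_i(G_i)\}_{i\in I}$ is directed by inclusion. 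Suppose $p\notin M_i$ for all $i$. Then $\partial_xf(p)$ exists for every $x\in S$ and, patching the per-index morphism properties, $x\mapsto\partial_xf(p)$ is a group morphism on $S$; moreover $|\partial_xf(p)|\le{\rm Lip}(f)\,\sfd(x,e)$ by left-invariance and compatibility of $\sfd$, so this morphism is ${\rm Lip}(f)$-Lipschitz and hence extends to a Lipschitz group morphism $\ell\colon G\to\R$ (Theorem \ref{thm:extension_to_completion}, together with the continuity of the group operations of $G$). Finally, for $g\in G$ and any $x\in S$,
\[
\Big|\tfrac{f(p\,\delta_\lambda(g))-f(p)}{\lambda}-\ell(g)\Big|\le\Big|\tfrac{f(p\,\delta_\lambda(g))-f(p\,\delta_\lambda(x))}{\lambda}\Big|+\Big|\tfrac{f(p\,\delta_\lambda(x))-f(p)}{\lambda}-\ell(x)\Big|+\big|\ell(x)-\ell(g)\big|,
\]
where the first and last terms are bounded by ${\rm Lip}(f)\,\sfd(g,x)$ (using left-invariance) and the middle term tends to $0$ as $\lambda\to0$; choosing $x$ close to $g$ and then $\lambda$ small shows that $f$ is G\^{a}teaux differentiable at $p$ with $\d_p f=\ell$. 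Hence $\mathrm{ND}(f)\subseteq\bigcup_iM_i$.

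For (b), fix $i\in I$ and $q\in G$. For $g\in G_i$ one computes $L_{q^{-1}\varphi_i(g)}\circ\varphi_i=\big(L_{q^{-1}}\circ\varphi_i\big)\circ L_g$, where $L_g$ denotes left translation in $G_i$; hence $g\in\varphi_i^{-1}(qM_i)$, i.e. $q^{-1}\varphi_i(g)\in M_i$, holds precisely when $\big(f\circ L_{q^{-1}}\circ\varphi_i\big)\circ L_g$ fails to be G\^{a}teaux differentiable at $e_i$, equivalently when $f\circ L_{q^{-1}}\circ\varphi_i$ fails to be G\^{a}teaux differentiable at $g$ (G\^{a}teaux differentiability is covariant under left translation). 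Therefore $\varphi_i^{-1}(qM_i)=\mathrm{ND}\big(f\circ L_{q^{-1}}\circ\varphi_i\big)$; since $f\circ L_{q^{-1}}\circ\varphi_i\colon G_i\to\R$ is Lipschitz and $(G_i,\mathcal N_i)$ has the Rademacher property, this set belongs to $\mathcal N_i$ (and is in particular Borel in $G_i$).

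The most delicate point is (c), the Borel measurability of $M_i$ inside $G$: unlike the preimages in (b), which are handed to us as non-differentiability sets, $M_i$ itself must be shown measurable directly. Here I would fix in each $G_i$ a countable set generating a dense subgroup $S_i$ of $G_i$ (available whenever $G_i$ is separable, in particular for the infinite-dimensional Carnot groups motivating Theorem \ref{thm:Rademacher_property}) and express ``$f\circ L_p\circ\varphi_i$ is G\^{a}teaux differentiable at $e_i$'' through countably many conditions on $p$: for every $s\in S_i$ the $\liminf$ and $\limsup$ as $\lambda\to0$ (over rational $\lambda$, using continuity of $\delta$) of $\lambda^{-1}\big(f(p\,\varphi_i(\delta_\lambda(s)))-f(p)\big)$ coincide and are finite, and $s\mapsto\partial_{\varphi_i(s)}f(p)$ is additive on $S_i$. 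The density reduction from (a), applied with the ambient group $G_i$ in place of $G$, shows that these countably many conditions already force genuine G\^{a}teaux differentiability of $f\circ L_p\circ\varphi_i$ at $e_i$; and each condition cuts out a Borel subset of $G$, because $p\mapsto f(p\,c)$ is continuous for every fixed $c\in G$. Thus $M_i^c$, being a countable intersection of Borel sets, is Borel, and so is $\mathrm{ND}(f)=\bigcup_iM_i$. Combining (a)--(c) yields $\mathrm{ND}(f)\in\mathcal N$, as required; applied to every Lipschitz $f$, this is the assertion that $(G,\mathcal N)$ has the Rademacher property.
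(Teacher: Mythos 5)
Your argument is correct and, in its substance, it is the paper's own proof: your sets \(M_i\) are exactly the paper's \(N_i\); your step (b) is the paper's claim \eqref{eq:Rademacher_claim} (the paper states an inclusion, but its computation gives the same equivalence \(\varphi_i^{-1}(qM_i)={\rm ND}(f\circ L_{q^{-1}}\circ\varphi_i)\) that you use); and your density reduction in (a) -- directional derivatives on the dense subgroup \(\bigcup_i\varphi_i(G_i)\), the bound \(|\partial_xf(p)|\le{\rm Lip}(f)\,\sfd(x,e)\), extension to a Lipschitz morphism of \(G\), and the three-term equiLipschitz estimate for the incremental ratios -- is precisely the paper's construction of \(F_p\), \({\rm IR}_\lambda\) and \(\bar F_p\). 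Proving the full equality \({\rm ND}(f)=\bigcup_iM_i\) is harmless but not needed; the paper only uses \({\rm ND}(f)\subseteq\bigcup_iN_i\). The one genuine divergence is your point (c): the paper simply asserts \(N_i\in\mathscr B(G)\) without argument, whereas you attempt to justify it, and your justification needs each \(G_i\) to be separable (to produce the countable set \(S_i\)), a hypothesis not present in the statement. In the separable case your reduction to countably many liminf/limsup and additivity conditions, combined with the same equiLipschitz density argument run inside \(G_i\), does yield Borel measurability of \(M_i\) -- and this covers the motivating infinite-dimensional Carnot groups -- but for general CMSGs your step (c) does not apply as written, so the measurability of \(M_i\) is left open by your proof in exactly the same way it is left implicit in the paper's.
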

\begin{proof}
Let \(f\colon G\to\R\) be a fixed Lipschitz function. We define
\[
N_i\coloneqq\big\{p\in G\;\big|\;
e_i\in{\rm ND}(f\circ L_p\circ\varphi_i)\big\}\in\mathscr B(G)
\quad\text{ for every }i\in I.
\]
Then we claim that \(N\coloneqq\bigcup_{i\in I}N_i\in\mathcal N\). To prove it,
fix \(i\in I\) and \(q\in G\). We aim to show that
\begin{equation}\label{eq:Rademacher_claim}
\varphi_i^{-1}(q N_i)\subseteq{\rm ND}(f\circ L_q^{-1}\circ\varphi_i).
\end{equation}
Pick any \(x\in\varphi_i^{-1}(q N_i)\). This means that \(\varphi_i(x)=qp\) for
some \(p\in N_i\). By construction, we have \(e_i\in{\rm ND}(f\circ L_p\circ\varphi_i)\). Observe that for any \(g\in G_i\) and \(\lambda>0\) it holds
\[\begin{split}
\frac{(f\circ L_q^{-1}\circ\varphi_i)\big(x\,\delta^i_\lambda(g)\big)
-(f\circ L_q^{-1}\circ\varphi_i)(x)}{\lambda}
&=\frac{f\big(q^{-1}\,\varphi_i\big(x\,\delta^i_\lambda(g)\big)\big)
-f\big(q^{-1}\,\varphi_i(x)\big)}{\lambda}\\
&=\frac{f\big(p\,\varphi_i\big(\delta^i_\lambda(g)\big)\big)-f(p)}{\lambda}\\
&=\frac{(f\circ L_p\circ\varphi_i)\big(\delta^i_\lambda(g)\big)
-(f\circ L_p\circ\varphi_i)(e_i)}{\lambda},
\end{split}\]
so that \(e_i\in{\rm ND}(f\circ L_p\circ\varphi_i)\) is equivalent to
 \(x\in{\rm ND}(f\circ L_q^{-1}\circ\varphi_i)\). This proves
\eqref{eq:Rademacher_claim}. Given that the function \(f\circ L_q^{-1}\circ\varphi_i\)
is Lipschitz (as composition of Lipschitz maps) and \((G_i,\mathcal N_i)\) has
the Rademacher property, we conclude that \(\varphi_i^{-1}(q N_i)\in\mathcal N_i\)
for all \(i\in I\) and accordingly \(N\in\mathcal N\).

Let \(p\in G\setminus N\) be fixed. We aim to prove that \(f\) is G\^{a}teaux
differentiable at \(p\), which would be enough to achieve the statement.
Let us call \(G'\coloneqq\bigcup_{i\in I}\varphi_i(G_i)\), which is a dense
scalable subgroup of \(G\) (recall Remark \ref{rmk:density_claim}).

We define the function \(F_p\colon G'\to\R\) as
follows: given \(g\in G'\), we set
\[
F_p(g)\coloneqq\d_{e_i}(f\circ L_p\circ\varphi_i)(g_i)
\quad\text{ for any }i\in I\text{ and }g_i\in G_i\text{ such that }\varphi_i(g_i)=g.
\]
The well-posedness of \(F_p\) stems from the trivial identity
\begin{equation}\label{eq:Rademacher_aux}
\d_{e_i}(f\circ L_p\circ\varphi_i)(g_i)=\lim_{\lambda\to 0}
\frac{f\big(p\,\varphi_i\big(\delta^i_\lambda(g_i)\big)\big)-f(p)}{\lambda}=
\lim_{\lambda\to 0}\frac{f\big(p\,\delta_\lambda(g)\big)-f(p)}{\lambda}.
\end{equation}
It can be also readily checked -- by looking at \eqref{eq:Rademacher_aux} --
that \(F_p\) is a morphism of scalable groups.

Given any \(\lambda\in\R\setminus\{0\}\), we define the `incremental ratio' function
\({\rm IR}_\lambda\colon G\to\R\) as
\[
{\rm IR}_\lambda(g)\coloneqq\frac{f\big(p\,\delta_\lambda(g)\big)-f(p)}{\lambda}
\quad\text{ for every }g\in G.
\]
Observe that for any \(\lambda\in\R\setminus\{0\}\) and \(g,h\in G\) we have that
\[
\big|{\rm IR}_\lambda(g)-{\rm IR}_\lambda(h)\big|
=\frac{\big|f\big(p\,\delta_\lambda(g)\big)-f\big(p\,\delta_\lambda(h)\big)\big|}
{|\lambda|}\leq{\rm Lip}(f)\,
\frac{\sfd\big(p\,\delta_\lambda(g),p\,\delta_\lambda(h)\big)}{|\lambda|}
={\rm Lip}(f)\,\sfd(g,h),
\]
whence \(\{{\rm IR}_\lambda\}_{\lambda\neq 0}\) is an equiLipschitz family of
functions. As a consequence of \eqref{eq:Rademacher_aux}, we see that the
function \(F_p\colon G'\to\R\) is Lipschitz (being the pointwise limit of an
equiLipschitz family of functions), thus it can be uniquely extended to a
Lipschitz function \(\bar F_p\colon G\to\R\). By density of \(G'\) in \(G\) and
by continuity of the scalable group operations, we deduce that the extended
function \(\bar F_p\) is a Lipschitz morphism of scalable groups. Finally,
we also know that \({\rm IR}_\lambda\to\bar F_p\) in the
pointwise sense, so that \(f\) is G\^{a}teaux differentiable at \(p\) and
\(\d_p f=\bar F_p\). This completes the proof.
\end{proof}
\appendix
\section{Inverse limits of complete metric scalable groups}
\label{ss:IL_CMSG}
Let \(\big(\{G_i\}_{i\in I},\{P_{ij}\}_{i\leq j}\big)\) be an inverse system
of complete metric scalable groups. We denote by \(\delta^i\) and \(\sfd_i\)
the dilation and the distance on \(G_i\), respectively. Since
\(\big(\{G_i\}_{i\in I},\{P_{ij}\}_{i\leq j}\big)\) is an
inverse system in the category of scalable groups, it admits an inverse limit
\(\big((G',\delta'),\{P'_i\}_{i\in I}\big)\) in this category by Theorem
\ref{thm:IL_scalable_gps}. Set
\begin{equation}\label{eq:sup-metric}
\sfd'(x,y)\coloneqq\sup_{i\in I}\sfd_i\big(P'_i(x),P'_i(y)\big)
=\lim_{i\in I}\sfd_i\big(P'_i(x),P'_i(y)\big)
\quad\text{ for every }x,y\in G'.
\end{equation}
Then \(\sfd'\) is an \emph{extended} distance on \(G'\),
meaning that it satisfies the distance axioms, but
possibly taking value \(+\infty\). It can be readily checked
that \(\sfd'\) is \emph{compatible}, i.e.,
\[\begin{split}
\sfd'(xy,xz)=\sfd'(y,z),&\quad\text{ for every }x,y,z\in G',\\
\sfd'\big(\delta'_\lambda(x),\delta'_\lambda(y)\big)
=|\lambda|\,\sfd'(x,y),&\quad\text{ for every }\lambda\in\R
\text{ and }x,y\in G',
\end{split}\]
where in the second identity we are adopting the convention
that \(0\cdot\infty=0\). Moreover, we set
\begin{equation}\label{eq:def_IL}
G\coloneqq\big\{x\in G'\;\big|\;\sfd'(x,e)<+\infty\big\},
\qquad\delta\coloneqq\delta'|_{\R\times G}\colon\R\times G\to G,
\qquad\sfd\coloneqq\sfd'|_{G\times G},
\end{equation}
where \(e\) is the identity element of \(G'\).
Standard verifications show that \(G\) is a subgroup of \(G'\) and
\(\delta'_\lambda(G)\subseteq G\) for every \(\lambda\in\R\), so that
\(\delta\) is well-defined and is a dilation on \(G\). Moreover, the
restricted distance \(\sfd\) is compatible. Let us also define
\begin{equation}\label{eq:def_IL_morph}
P_i\coloneqq P'_i|_G\colon G\to G_i\quad\text{ for every }i\in I.
\end{equation}
It follows from the definition of \(\sfd'\) that each scalable group
morphism \(P_i\colon G\to G_i\) is \(1\)-Lipschitz.
\begin{definition}[Non-degenerate inverse system of MSGs]
\label{def:non-deg_IS}
Let \(\big(\{G_i\}_{i\in I},\{P_{ij}\}_{i\leq j}\big)\) be an inverse
system of metric scalable groups. Define \(\big((G,\delta,\sfd),
\{P_i\}_{i\in I}\big)\) as in \eqref{eq:def_IL} and \eqref{eq:def_IL_morph}.
Then we say that \(\big(\{G_i\}_{i\in I},\{P_{ij}\}_{i\leq j}\big)\) is
\emph{non-degenerate} provided the following conditions
are satisfied:
\begin{subequations}\begin{align}\label{eq:delta_non-deg_IS}
R_x\text{ is continuous at }e,&\quad\text{ for every }x\in G,\\
\label{eq:inv_non-deg_IS}
\delta(\cdot,x)\text{ is continuous at }t,&\quad\text{ for every }
x\in G\text{ and }t\in\R. 
\end{align}\end{subequations}
\end{definition}
Similarly to Propositions \ref{prop:char_non-def_DS} and
\ref{prop:char_non-def_DS_complete}, we have an alternative
(and more explicit) characterization of non-degeneracy.
We omit its standard proof.
\begin{proposition}\label{prop:char_non-def_IS}
Let \(\big(\{G_i\}_{i\in I},\{P_{ij}\}_{i\leq j}\big)\)
be an inverse system of metric scalable groups. Let us define
\(\big((G,\delta,\sfd), \{P_i\}_{i\in I}\big)\) as in \eqref{eq:def_IL}
and \eqref{eq:def_IL_morph}. Then \eqref{eq:delta_non-deg_IS} and
\eqref{eq:inv_non-deg_IS} are equivalent to
\begin{subequations}\begin{align}
\label{eq:delta_non-deg_IL_prop}
\inf_{\eta>0}\;\lim_{i\in I}\;\sup\bigg\{\sfd_i\big(P_i(yx),P_i(x)\big)\;
\bigg|\;y\in G,\,\lim_{j\in I}\sfd_j\big(P_j(y),e_j\big)<\eta\bigg\}=0,&
\quad\forall x\in G,\\
\label{eq:inv_non-deg_IL_prop}
\inf_{\eta>0}\;\lim_{i\in I}\;\sup_{\substack{s\in\R:\\|s-t|<\eta}}
\sfd_i\big(\delta^i_s(P_i(x)),\delta^i_t(P_i(x))\big)=0,&
\quad\forall x\in G,\,t\in\R,
\end{align}\end{subequations}
respectively.
\end{proposition}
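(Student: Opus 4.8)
The proof proceeds exactly along the lines of Proposition~\ref{prop:char_non-def_DS}: one unwinds the $\eps$--$\eta$ definition of continuity of $R_x$ (resp.\ of $\delta(\cdot,x)$) and rewrites the extended distance $\sfd'$ by means of its defining formula~\eqref{eq:sup-metric}. The only preliminary fact needed is a monotonicity observation. Since each $P_{ij}$ is $1$-Lipschitz, for fixed $u,v\in G'$ the net $i\mapsto\sfd_i\big(P'_i(u),P'_i(v)\big)$ is non-decreasing along $(I,\leq)$, so that $\lim_{i\in I}$ agrees with $\sup_{i\in I}$ on it (this is the content of the second identity in~\eqref{eq:sup-metric}); moreover, taking a supremum of such nets over an auxiliary parameter again yields a net that is non-decreasing in $i$, so in the expressions~\eqref{eq:delta_non-deg_IL_prop} and~\eqref{eq:inv_non-deg_IL_prop} the symbol $\lim_{i\in I}$ equals $\sup_{i\in I}$ and may be freely interchanged with the inner supremum. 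All quantities involved stay finite because we work in $G$ and not in $G'$.

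For the equivalence \eqref{eq:delta_non-deg_IS}$\Leftrightarrow$\eqref{eq:delta_non-deg_IL_prop}, recall that $R_x$ (with $x\in G$) is continuous at $e$ if and only if for every $\eps>0$ there is $\eta>0$ such that $\sfd'(yx,x)\leq\eps$ whenever $y\in G$ satisfies $\sfd'(y,e)<\eta$; note that $yx\in G$ since $G$ is a subgroup. Writing $\sfd'(y,e)=\lim_{j\in I}\sfd_j\big(P_j(y),e_j\big)$ and $\sfd'(yx,x)=\sup_{i\in I}\sfd_i\big(P_i(yx),P_i(x)\big)$ via~\eqref{eq:sup-metric}, the monotonicity remark gives, for each fixed $\eta>0$,
\[
\lim_{i\in I}\;\sup\Big\{\sfd_i\big(P_i(yx),P_i(x)\big)\;\Big|\;y\in G,\ \sfd'(y,e)<\eta\Big\}
=\sup\Big\{\sfd'(yx,x)\;\Big|\;y\in G,\ \sfd'(y,e)<\eta\Big\}.
\]
Taking $\inf_{\eta>0}$ on both sides, the left-hand side becomes the quantity in~\eqref{eq:delta_non-deg_IL_prop}, while the right-hand side vanishes precisely when $R_x$ is continuous at $e$ (up to replacing $\eps$ by $\eps/2$ to pass between "$\leq\eps$'' and "$<\eps$''). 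Since both conditions are quantified over all $x\in G$, this proves the equivalence.

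The equivalence \eqref{eq:inv_non-deg_IS}$\Leftrightarrow$\eqref{eq:inv_non-deg_IL_prop} is obtained in the very same way, with $\delta(\cdot,x)$ replacing $R_x$. The additional input is that each $P_i\colon G\to G_i$ is a morphism of scalable groups, so $P_i\big(\delta'_s(x)\big)=\delta^i_s\big(P_i(x)\big)$ and hence $\sfd'\big(\delta'_s(x),\delta'_t(x)\big)=\sup_{i\in I}\sfd_i\big(\delta^i_s(P_i(x)),\delta^i_t(P_i(x))\big)$; one then expresses "$\delta(\cdot,x)$ is continuous at $t$'' as the $\eps$--$\eta$ statement "$\sfd'\big(\delta'_s(x),\delta'_t(x)\big)\leq\eps$ whenever $|s-t|<\eta$'', takes the supremum over $\{s:|s-t|<\eta\}$, interchanges it with $\lim_{i\in I}=\sup_{i\in I}$ as above, and lets $\eta$ vary. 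No genuine difficulty arises; the only point requiring care is the bookkeeping of the order of $\inf_{\eta>0}$, $\lim_{i\in I}$ and the auxiliary supremum, which is legitimate exactly because of the monotonicity of the relevant nets noted at the outset. This is why the statement was described as standard, and we content ourselves with the above sketch.
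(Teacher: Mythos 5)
Your argument is correct and is exactly the standard unwinding the paper has in mind (it omits the proof, referring to the analogy with Proposition \ref{prop:char_non-def_DS}): the key observations -- that $i\mapsto\sfd_i\big(P'_i(u),P'_i(v)\big)$ is non-decreasing because $P'_i=P_{ij}\circ P'_j$ with $P_{ij}$ $1$-Lipschitz, so that $\lim_{i\in I}$ is a supremum and may be interchanged with the auxiliary supremum, after which \eqref{eq:delta_non-deg_IL_prop} and \eqref{eq:inv_non-deg_IL_prop} become the $\eps$--$\eta$ formulations of continuity of $R_x$ at $e$ and of $\delta(\cdot,x)$ at $t$ -- are all valid, and the finiteness and closure remarks ($yx\in G$, $\delta'_s(x)\in G$) are correctly used. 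No gap to report.
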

\begin{lemma}\label{lem:IL_constr}
Let \(\big(\{G_i\}_{i\in I},\{P_{ij}\}_{i\leq j}\big)\) be
a non-degenerate inverse system of metric scalable groups.
Then \((G,\delta,\sfd)\) defined in \eqref{eq:def_IL} is
a metric scalable group and each map \(P_i\colon G\to G_i\) as in
\eqref{eq:def_IL_morph} is a morphism of metric scalable groups.
Moreover, if in addition the spaces \(\{G_i\}_{i\in I}\) are complete metric
scalable groups, then \(G\) is a complete metric scalable group as well.
\end{lemma}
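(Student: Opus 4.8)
The plan is to break the statement into three parts: that $(G,\delta,\sfd)$ is a metric scalable group, that each $P_i$ is a morphism of metric scalable groups, and —under the extra completeness hypothesis— that $G$ itself is complete. The first two parts are essentially bookkeeping resting on Lemma~\ref{lem:cont_operations}, and the completeness statement is the only part calling for a genuine argument. \emph{For the metric scalable group structure:} from the discussion surrounding \eqref{eq:def_IL} we already know that $(G,\delta)$ is a scalable group (a $\delta'$-stable subgroup of the scalable group $G'$ provided by Theorem~\ref{thm:IL_scalable_gps}) and that the restricted distance $\sfd$ is compatible with it and finite-valued by the very definition of $G$. Endowing $G$ with the topology induced by $\sfd$, it remains by Definition~\ref{def:metric_scalable_gp} to check that ${\sf Op}$, ${\sf Inv}$ and $\delta$ are continuous. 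Left translations are $\sfd$-isometries by left-invariance. The non-degeneracy condition \eqref{eq:delta_non-deg_IS} —that $R_x$ is continuous at $e$ for every $x\in G$— together with Lemma~\ref{lem:cont_operations}~\ref{it:Op_Cauchy} makes ${\sf Op}$ Cauchy-continuous, hence continuous; the same condition combined with Lemma~\ref{lem:cont_operations}~\ref{it:inv_vs_Rx} yields $\omega_{\sf Inv}(x;\eps)=\omega_{R_{x^{-1}}}(e;\eps)>0$ for all $x\in G$ and $\eps>0$, so ${\sf Inv}$ is continuous; and condition \eqref{eq:inv_non-deg_IS} —that $\delta(\cdot,x)$ is continuous for every $x\in G$— together with Lemma~\ref{lem:cont_operations}~\ref{it:delta_Cauchy} makes $\delta$ Cauchy-continuous, hence continuous. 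Thus $(G,\delta,\sfd)$ is a metric scalable group. \emph{Part two is then immediate:} each $P_i=P'_i|_G$ is a morphism of scalable groups by construction and is $1$-Lipschitz, as noted right after \eqref{eq:def_IL_morph}, hence a morphism of metric scalable groups.

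\emph{For completeness:} assume now each $G_i$ is complete and let $(x_n)_n\subseteq G$ be $\sfd$-Cauchy. Since $\sfd_i\big(P_i(x_n),P_i(x_m)\big)\leq\sfd(x_n,x_m)$, the sequence $\big(P_i(x_n)\big)_n$ is $\sfd_i$-Cauchy for every $i\in I$, hence converges to some $y_i\in G_i$. Restricting the inverse-limit relation $P'_i=P_{ij}\circ P'_j$ to $G$ gives $P_i=P_{ij}\circ P_j$, and letting $n\to\infty$ in $P_i(x_n)=P_{ij}\big(P_j(x_n)\big)$, using the continuity of the $1$-Lipschitz morphism $P_{ij}$, we get $y_i=P_{ij}(y_j)$ for all $i\leq j$; so $y\coloneqq(y_i)_{i\in I}$ is a well-defined element of $G'$. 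The step that requires attention is checking that $y$ actually lies in $G$, i.e.\ $\sfd'(y,e)<+\infty$: a $\sfd$-Cauchy sequence is $\sfd$-bounded, say $\sfd(x_n,x_1)\leq M$ for all $n$, so $\sfd_i\big(P_i(x_n),P_i(x_1)\big)\leq M$ for every $i$ and $n$, and letting $n\to\infty$ gives $\sfd_i(y_i,e_i)\leq M+\sfd(x_1,e)$ uniformly in $i$, whence $\sfd'(y,e)=\sup_{i\in I}\sfd_i(y_i,e_i)<+\infty$ and $y\in G$. Finally $x_n\to y$ in $(G,\sfd)$: given $\eps>0$, pick $N$ with $\sfd(x_n,x_m)<\eps$ for $n,m\geq N$; then $\sfd_i\big(P_i(x_n),P_i(x_m)\big)<\eps$ for all $i$ and all $n,m\geq N$, and letting $m\to\infty$ yields $\sfd_i\big(P_i(x_n),y_i\big)\leq\eps$ for all $i$ and all $n\geq N$, i.e.\ $\sfd(x_n,y)=\sup_{i\in I}\sfd_i\big(P_i(x_n),y_i\big)\leq\eps$ for $n\geq N$.

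The only real obstacle is this completeness argument, and within it the point that the coordinatewise limit $y$ might a priori land in $G'\setminus G$; as sketched, this is resolved by the elementary fact that $\sfd$-Cauchy sequences are $\sfd$-bounded, which forces $\sup_{i\in I}\sfd_i(y_i,e_i)$ to be finite. Everything else amounts to transporting the continuity statements of Lemma~\ref{lem:cont_operations} —applied to the scalable group $(G,\delta)$ equipped with the compatible distance $\sfd$, whose hypotheses are exactly the non-degeneracy conditions \eqref{eq:delta_non-deg_IS} and \eqref{eq:inv_non-deg_IS}— through the $1$-Lipschitz projections $P_i$.
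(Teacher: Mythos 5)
Your proposal is correct and follows essentially the same route as the paper: the metric scalable group structure and the morphism property come from Lemma \ref{lem:cont_operations} combined with the non-degeneracy conditions \eqref{eq:delta_non-deg_IS} and \eqref{eq:inv_non-deg_IS}, and completeness is proved by projecting a Cauchy sequence through the \(1\)-Lipschitz maps \(P_i\), using completeness of each \(G_i\) and continuity of the \(P_{ij}\) to assemble a compatible limit point in \(G'\), and then checking it lies in \(G\) and is the \(\sfd\)-limit. The only (immaterial) difference is that you get \(\sfd'(y,e)<+\infty\) from boundedness of the Cauchy sequence, whereas the paper deduces it from the already-established uniform estimate \(\sfd'(x^{n},x)\leq\eps\).
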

\begin{proof}
The first part of statement immediately follows from Lemma
\ref{lem:cont_operations} and the very definition of non-degenerate
inverse system. In order to prove the second part of the statement,
suppose the distances \(\{\sfd_i\}_{i\in I}\) are complete. We aim to show
that \(\sfd\) is a complete distance. Fix a \(\sfd\)-Cauchy
sequence \((x^n)_n\subseteq G\).
Since \(P_i\) is Lipschitz, we deduce that \(\big(P_i(x^n)\big)_n\subseteq G_i\)
is a \(\sfd_i\)-Cauchy sequence for any \(i\in I\). Since \(\sfd_i\) is complete,
there is \(x_i\in G_i\) such that \(\lim_n P_i(x^n)=x_i\). Given any
\(i,j\in I\) with \(i\leq j\), we see that \(P_{ij}(y_j)=y_i\) by
letting \(n\to\infty\) in \(P_{ij}\big(P_j(x^n)\big)=P_i(x^n)\)
(here the continuity of \(P_{ij}\) plays a role). Consequently, there exists
a unique element \(x\in G'\) such that \(P'_i(x)=x_i\) for all \(i\in I\).
We claim that \(x\in G\) and \(\lim_n x^n=x\). To prove it, fix \(\eps>0\)
and choose \(\bar n\in\N\) such that \(\sfd(x^n,x^m)\leq\eps\) for every
\(n,m\geq\bar n\). This implies that \(\sfd_i\big(P_i(x^n),P_i(x^m)\big)\leq\eps\)
for every \(i\in I\) and \(n,m\geq\bar n\). By letting \(m\to\infty\) we
infer that \(\sfd_i\big(P_i(x^n),x_i\big)\leq\eps\) for every \(i\in I\)
and \(n\geq\bar n\), whence \(\sfd'(x^n,x)\leq\eps\) for all \(n\geq\bar n\).
This gives \(\sfd'(x,e)\leq\sfd'(x,x^{\bar n})+\sfd(x^{\bar n},e)<+\infty\),
so accordingly \(x\in G\). Moreover, it shows that \(\lim_n\sfd(x^n,x)=0\), as required.
\end{proof}
We can now state and prove our main results about inverse limits of MSGs
and CMSGs.
\begin{theorem}[Inverse limits of MSGs]\label{thm:IL_MSG}
Let \(\big(\{G_i\}_{i\in I},\{P_{ij}\}_{i\leq j}\big)\) be
an inverse system of metric scalable groups. Then the
following conditions are equivalent:
\begin{itemize}
\item[\(\rm i)\)] The inverse system
\(\big(\{G_i\}_{i\in I},\{P_{ij}\}_{i\leq j}\big)\) is non-degenerate
(in the sense of Definition \ref{def:non-deg_IS}).
\item[\(\rm ii)\)] The inverse limit of
\(\big(\{G_i\}_{i\in I},\{P_{ij}\}_{i\leq j}\big)\)
in the category of metric scalable groups exists.
Moreover, it coincides with the couple
\(\big((G,\delta,\sfd),\{P_i\}_{i\in I}\big)\)
given by \eqref{eq:def_IL} and \eqref{eq:def_IL_morph}.
\end{itemize}
\end{theorem}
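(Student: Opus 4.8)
The plan is to mirror the proof of Theorem \ref{thm:MSG_DL}, using as the two main inputs the scalable-group inverse limit \(\big((G',\delta'),\{P'_i\}_{i\in I}\big)\) provided by Theorem \ref{thm:IL_scalable_gps} and the already-established Lemma \ref{lem:IL_constr}. In fact the whole argument is dual to the direct-limit case, with the supremum-distance \eqref{eq:sup-metric} playing the role of the infimum-pseudodistance.

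\emph{Proof of} \(\rm i)\Rightarrow\rm ii)\). Assuming non-degeneracy, I would first invoke Lemma \ref{lem:IL_constr} to know that \((G,\delta,\sfd)\) from \eqref{eq:def_IL} is a metric scalable group and that each \(P_i\colon G\to G_i\) from \eqref{eq:def_IL_morph} is a morphism of MSGs. The relation \(P_i=P_{ij}\circ P_j\) for \(i\leq j\) is obtained by restricting the corresponding identity \(P'_i=P_{ij}\circ P'_j\) to \(G\), so \(\big((G,\delta,\sfd),\{P_i\}_{i\in I}\big)\) satisfies \({\rm IL}_1)\). For the universal property \({\rm IL}_2)\), I would take any couple \(\big(H,\{Q_i\}_{i\in I}\big)\) satisfying \({\rm IL}_1)\) in the category of MSGs; since it is also such a couple in the category of scalable groups, Theorem \ref{thm:IL_scalable_gps} yields a unique scalable group morphism \(\Pi\colon H\to G'\) with \(P'_i\circ\Pi=Q_i\) for every \(i\in I\). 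The key point is that \(\Pi\) actually takes values in \(G\) and is \(1\)-Lipschitz: using that each \(Q_i\) is \(1\)-Lipschitz and that \(\sfd'\) is the supremum of the distances \(\sfd_i\) pulled back via the \(P'_i\),
\[
\sfd'\big(\Pi(y),\Pi(z)\big)=\sup_{i\in I}\sfd_i\big(P'_i(\Pi(y)),P'_i(\Pi(z))\big)=\sup_{i\in I}\sfd_i\big(Q_i(y),Q_i(z)\big)\leq\sfd_H(y,z)
\]
for all \(y,z\in H\), where \(\sfd_H\) is the distance on \(H\); taking \(z=e_H\) gives \(\sfd'\big(\Pi(y),e\big)\leq\sfd_H(y,e_H)<+\infty\), hence \(\Pi(H)\subseteq G\), and then the same estimate shows \(\Pi\colon H\to G\) is \(1\)-Lipschitz, thus a morphism of MSGs. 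Uniqueness of \(\Pi\) in the category of MSGs follows from its uniqueness in the category of scalable groups (any competitor, composed with the inclusion \(G\hookrightarrow G'\), would violate it). This establishes \({\rm IL}_2)\), so \(\big((G,\delta,\sfd),\{P_i\}_{i\in I}\big)\) is the inverse limit of the system in the category of MSGs.

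\emph{Proof of} \(\rm ii)\Rightarrow\rm i)\). If the inverse limit in the category of MSGs exists and coincides with \(\big((G,\delta,\sfd),\{P_i\}_{i\in I}\big)\), then in particular \((G,\delta,\sfd)\) is a metric scalable group; consequently \(\delta\colon\R\times G\to G\) is continuous (as \((G,\delta)\) is a topological scalable group) and every right translation \(R_x\), \(x\in G\), is continuous (as \(G\) is a topological group). These are precisely the conditions \eqref{eq:delta_non-deg_IS} and \eqref{eq:inv_non-deg_IS}, so the inverse system is non-degenerate.

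\emph{Main obstacle.} The only step requiring genuine care is the verification of \({\rm IL}_2)\): one must check that the scalable group morphism produced by Theorem \ref{thm:IL_scalable_gps} takes values in the (possibly proper) subgroup \(G\subseteq G'\) and is \(1\)-Lipschitz there. This is exactly what the displayed estimate handles, and it relies on both the \(1\)-Lipschitz property of the \(Q_i\) and the definition of \(\sfd'\) as a supremum. Everything else — that \(\delta,\sfd\) are well-defined and compatible, and that \(G\) is a genuine MSG under non-degeneracy — is the content of Lemma \ref{lem:IL_constr}, so no further work is needed.
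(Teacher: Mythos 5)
Your argument is correct and follows essentially the same route as the paper's proof: invoke Lemma \ref{lem:IL_constr} for the MSG structure on \((G,\delta,\sfd)\), obtain the candidate morphism from the scalable-group universal property of Theorem \ref{thm:IL_scalable_gps}, and use the supremum formula for \(\sfd'\) together with the \(1\)-Lipschitz property of the \(Q_i\) to show it lands in \(G\) and is \(1\)-Lipschitz, with the converse implication read off from the continuity of the operations of the MSG \(G\). No gaps; this matches the paper's proof in both structure and the key estimate.
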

\begin{proof}
{\color{blue}\({\rm i)}\Longrightarrow{\rm ii)}\)} Suppose i) holds.
In light of Lemma \ref{lem:IL_constr}, to prove that
\(\big(G,\{P_i\}_{i\in I}\big)\) is the inverse limit
of \(\big(\{G_i\}_{i\in I},\{P_{ij}\}_{i\leq j}\big)\),
it suffices to show that \(\big(G,\{P_i\}_{i\in I}\big)\)
satisfies the universal property. Fix a MSG \(H\) and a family
\(\{Q_i\}_{i\in I}\) of MSG morphisms \(Q_i\colon H\to G_i\)
such that \(Q_i=P_{ij}\circ Q_j\) for every \(i,j\in I\) with \(i\leq j\).
In particular, each \(Q_i\) is a morphism of scalable groups, then there
exists a unique scalable group morphism \(\Phi\colon H\to G'\) such that
\(Q_i=P'_i\circ\Phi\) for every \(i\in I\). Let us prove that
\(\Phi(H)\subseteq G\): given any \(y\in H\), we have that
\[
\sfd_i\big(P'_i(\Phi(y)),e\big)=\sfd_i\big(Q_i(y),e_i\big)
\leq\sfd_H(y,e_H)\quad\text{ for every }i\in I,
\]
whence \(\sfd'\big(\Phi(y),e\big)\leq\sfd_H(y,e_H)<+\infty\) and
accordingly \(\Phi(y)\in G\). The same computation shows that
\(\Phi\) is \(1\)-Lipschitz from \((H,\sfd_H)\) to \((G,\sfd)\).
Observe also that \(Q_i=P_i\circ\Phi\) for all \(i\in I\).
Therefore, the universal property is proven, thus obtaining ii).\\
{\color{blue}\({\rm ii)}\Longrightarrow{\rm i)}\)} Suppose ii) holds.
Then \eqref{eq:delta_non-deg_IS} and \eqref{eq:inv_non-deg_IS}
are satisfied, thus accordingly the inverse system
\(\big(\{G_i\}_{i\in I},\{P_{ij}\}_{i\leq j}\big)\)
is non-degenerate. This proves i).
\end{proof}
\begin{corollary}[Inverse limits of CMSGs]\label{cor:IL_CMSG}
Let \(\big(\{G_i\}_{i\in I},\{P_{ij}\}_{i\leq j}\big)\) be
an inverse system of complete metric scalable groups. Then the
following conditions are equivalent:
\begin{itemize}
\item[\(\rm i)\)] The inverse system
\(\big(\{G_i\}_{i\in I},\{P_{ij}\}_{i\leq j}\big)\) is non-degenerate
(in the sense of Definition \ref{def:non-deg_IS}).
\item[\(\rm ii)\)] The inverse limit of
\(\big(\{G_i\}_{i\in I},\{P_{ij}\}_{i\leq j}\big)\)
in the category of complete metric scalable groups exists.
Moreover, it coincides with the couple
\(\big((G,\delta,\sfd),\{P_i\}_{i\in I}\big)\)
given by \eqref{eq:def_IL} and \eqref{eq:def_IL_morph}.
\end{itemize}
\end{corollary}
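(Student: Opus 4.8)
The plan is to obtain this corollary as a quick consequence of Theorem \ref{thm:IL_MSG} and Lemma \ref{lem:IL_constr}, exploiting the fact that the category of complete metric scalable groups sits inside the category of metric scalable groups as a full subcategory, the morphisms being the same in both.

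For the implication ${\rm i)}\Rightarrow{\rm ii)}$, assume the inverse system is non-degenerate. Since each $G_i$ is a CMSG, Lemma \ref{lem:IL_constr} shows that the triple $(G,\delta,\sfd)$ of \eqref{eq:def_IL} is a complete metric scalable group and each $P_i$ of \eqref{eq:def_IL_morph} is a morphism of CMSGs; moreover, Theorem \ref{thm:IL_MSG} guarantees that $\big((G,\delta,\sfd),\{P_i\}_{i\in I}\big)$ is already the inverse limit in the category of MSGs, so in particular the compatibility property ${\rm IL}_1)$ holds. It then remains only to verify the universal property ${\rm IL}_2)$ within the category of CMSGs. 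Fix a CMSG $H$ and a family $\{Q_i\}_{i\in I}$ of CMSG morphisms $Q_i\colon H\to G_i$ with $Q_i=P_{ij}\circ Q_j$ for all $i,j\in I$ with $i\leq j$. Then $\big(H,\{Q_i\}_{i\in I}\big)$ also satisfies ${\rm IL}_1)$ in the category of MSGs, so the MSG universal property yields a unique morphism of MSGs $\Pi\colon H\to G$ with $Q_i=P_i\circ\Pi$ for every $i\in I$; since morphisms of CMSGs coincide with morphisms of MSGs, $\Pi$ is automatically a morphism of CMSGs, and its uniqueness among such morphisms is inherited from the MSG statement. Hence $\big((G,\delta,\sfd),\{P_i\}_{i\in I}\big)$ is the inverse limit in the category of CMSGs, which is ${\rm ii)}$.

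For ${\rm ii)}\Rightarrow{\rm i)}$, assume the inverse limit in the category of CMSGs exists and coincides with $\big((G,\delta,\sfd),\{P_i\}_{i\in I}\big)$. Then, in particular, this triple is an object of that category, i.e.\ a complete metric scalable group, so the dilation $\delta\colon\R\times G\to G$, the multiplication ${\sf Op}\colon G\times G\to G$ and the inversion ${\sf Inv}\colon G\to G$ are all continuous. In particular, for every $x\in G$ the right translation $R_x={\sf Op}(\cdot,x)$ is continuous at $e$, and for every $x\in G$ and $t\in\R$ the map $\delta(\cdot,x)\colon\R\to G$ is continuous at $t$; these are precisely conditions \eqref{eq:delta_non-deg_IS} and \eqref{eq:inv_non-deg_IS}, so the inverse system is non-degenerate.

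The argument is entirely formal once Theorem \ref{thm:IL_MSG} and Lemma \ref{lem:IL_constr} are available; the only point that requires a moment's care is the passage, in ${\rm i)}\Rightarrow{\rm ii)}$, from the MSG universal property to the CMSG one, which causes no trouble precisely because CMSG is a full subcategory of MSG. I do not expect a genuine obstacle here.
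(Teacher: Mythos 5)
Your proposal is correct and follows essentially the same route as the paper, whose proof is just the observation that the claim follows from Theorem \ref{thm:IL_MSG} together with the last part of Lemma \ref{lem:IL_constr}; you merely spell out the details (completeness of \(G\) from Lemma \ref{lem:IL_constr}, and the transfer of the universal property via the fact that CMSG is a full subcategory of MSG) that the paper leaves implicit.
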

\begin{proof}
The claim follows from Theorem \ref{thm:IL_MSG} and the last part
of the statement of Lemma \ref{lem:IL_constr}.
\end{proof}

\textbf{Acknowledgements.}
Both authors were supported by the European Research Council
(ERC Starting Grant 713998 GeoMeG Geometry of Metric Groups).
The first named author was also supported by the Academy of Finland
(grant 288501 Geometry of subRiemannian groups and grant 322898
Sub-Riemannian Geometry via Metric-geometry and Lie-group Theory).
The second named author was also supported by the Academy of Finland
(project number 314789) and by the Balzan project led by Prof.\ Luigi
Ambrosio.
\end{document}